 \newtheorem{thm}{Theorem}[section]
 \newtheorem{cor}[thm]{Corollary}
 \newtheorem{lem}[thm]{Lemma}
 \newtheorem{prop}[thm]{Proposition}
 \theoremstyle{definition}
 \newtheorem{defn}[thm]{Definition}
 \theoremstyle{remark}
 \newtheorem{rem}[thm]{Remark}
 \theoremstyle{definition}
 \newtheorem{ex}[thm]{Example}
\newtheorem*{conj*}{Conjecture}
 \newtheorem*{cor*}{Corollary}
 \newcommand{\X}{\Bbb{X}}
 \newcommand{\PP}{\mathbb{P}}
\def\move-in{\parshape=1.75true in 5true in}
\newcommand\rk{\mathrm{rk}}
\definecolor{Orange}{cmyk}{0, 0.7, 0.7, 0.1}
\begin{document}

\title{{ Symmetric tensors: rank, Strassen's conjecture and $\lowercase{e}$-computability}}

\author[E. Carlini]{Enrico Carlini}
\address[E. Carlini]{Department of Mathematical Sciences, Politecnico di Torino, Turin, Italy}
\email{enrico.carlini@polito.it}
\address[E.Carlini]{School of Mathematical Sciences, Monash University, Clayton, Australia.}
\email{enrico.carlini@monash.edu}

\author[M.V.Catalisano]{Maria Virginia Catalisano}
\address[M.V.Catalisano]{Dipartimento di Ingegneria Meccanica, Energetica, Gestionale e dei Trasporti, Universit\`{a} degli studi di Genova, Genoa, Italy.}
\email{catalisano@diptem.unige.it}

\author[L. Chiantini]{Luca Chiantini}
\address[L. Chiantini]{Department of Information Engineering and Mathematics, University of Siena,Italy.}
\email{luca.chiantini@unisi.it}

\author[A.V. Geramita]{Anthony V. Geramita}
\address[A.V. Geramita]{Department of Mathematics and Statistics, Queen's University, King\-ston, Ontario, Canada and Dipartimento di Matematica, Universit\`{a} degli studi di Genova, Genoa, Italy}
\email{Anthony.Geramita@gmail.com \\ geramita@dima.unige.it  }

\author[Y. Woo]{Youngho Woo}
\address[Y. Woo]{National Institute of Mathematical Sciences, Daejeon, South Korea}
\email{youngw@nims.re.kr}

\subjclass[2000]{Primary 14Q20}

%%%%%%%%%%%%%%%%%%%%

\begin{abstract}
 In this paper we introduce a new method to produce lower bounds for the Waring rank of symmetric tensors. We also introduce the notion of {$e$-computability} and we use it to prove that Strassen's Conjecture holds in infinitely many new cases.
\end{abstract}

%%%%%%%%%%%%%%%%%%%%

\maketitle

% \today

%%%%%%%%%%%%%%%%%%%%

\section{Introduction}

Let $k$ be a  field of characteristic zero and let $F \in k[x_0,x_1,\ldots , x_n]= S = \oplus S_i\  (i\geq 0 $ and $ n \geq 1$) be a homogeneous polynomial ({\it form}) of degree $d$ i.e. $F \in S_d$.  It is well known that in this case  each $S_i$ has a basis consisting of $i^{th}$ powers of linear forms.  Thus we may write
$$
F = \sum_{i=1}^r \alpha_i L_i^d \hskip 2cm \alpha_i \in k, L_i \in S_1.
$$
If $k$ is algebraically closed (which we now assume for the rest of the paper) then each $\alpha_i = \beta_i^d$ for some $\beta_i \in k$ and so we can write
\begin{eqnarray}  \label{WD}
F = \sum_{i=1}^r(\beta_iL_i)^d = \sum_{i=1}^r\tilde{L}_i^d
\end{eqnarray}

We call a description of $F$ as in (\ref {WD}), a {\it Waring Decomposition of $F$}.  The least integer $r$ such that $F$ has a Waring Decomposition with exactly $r$ summands is called the {\it Waring Rank} (or simply the {\it rank}) of $F$.

There are several variants on this notion in the literature (see e.g.\cite{RS00}, \cite{landsbergbook}, \cite{BBM:1}). In this paper we will only be interested in the notion of rank described above.

It is easy to see that $F$ has rank one if and only if $[F]\in \PP(S_d)$ is
a point of the  {\it Veronese variety}, $\mathbb{V} \subset \PP(S_d)$.  If $F$ has rank $r$
then $[F] \in \PP(S_d)$ is on $\sigma_r(\mathbb{V})$, the $(r-1)^{st}$ {\it secant variety} of $\mathbb{V}$.

Given a Waring Decomposition of $F$
$$
F=L_1^d + \ldots  + L_\ell^d \ \hbox{ with } L_i = a_{i0}x_0 + \ldots + a_{in}x_n,
$$
we can associate a set of $\ell$ points in $\PP^n$ to this decomposition, namely
$$
\X= \{ [a_{10}:\ldots :a_{1n}], \ldots , [a_{\ell 0}: \ldots :a_{\ell n}] \}.
$$
The importance of this set will be explained a bit further on.

Let $T = k[X_0, \ldots , X_n] = \oplus T_i  \ (i\geq 0)$ be another polynomial ring and let $T$ act on $S$ by
setting
$$
X_i \circ F = (\partial/\partial x_i) (F)
$$
and extending linearly (see  \cite{Ge} or \cite{IaKa}
).  With this action we write
$$
F^\perp = \{ g \in T \ \mid \ g\circ F = 0 \}.
$$

If $F$ is a form of degree $d$, then every form in $T$ of degree $\geq d+1$ is in $F^\perp$ and so $F^\perp$ is an Artinian ideal of $T$.  It is a classical theorem of Macaulay that $T/F^\perp$ is also a Gorenstein ring with socle in degree $d$.  Moreover, every Gorenstein Artinian quotient of $T$ with socle in degree $d$ is of the form $T/F^\perp$, with $F$ a form of degree $d$.

Suppose that $F = L^d$ where $L=a_0x_0 + \ldots +a_nx_n$ and $g \in T_\delta$.  Then
$$
g\circ L^d = (d!/\delta !)g(a_0, \ldots , a_n)L^{d-\delta} .
$$
 It follows that if $F \in S_d$ has a  Waring Decomposition
$$
F = L_1^d+ \ldots + L_\ell^d \hbox{ where } L_i\leftrightarrow p_i \in \PP^n \hbox{ and } \Bbb Y =\{p_1, \ldots p_\ell \}
$$
then  for all $g \in T$ such that $g(p_i)=0, i=1, \ldots ,\ell$, $ g \in F^\perp $,  that is
$$
I_{\Bbb Y} \subset F^\perp
$$
where $I_{\Bbb Y}\subset T$ is the ideal of the set $\mathbb{Y}$.

The opposite implication is also true, namely $I_{\Bbb Y} \subset F^\perp$, with $\Bbb Y$ a finite set of $\ell$ points in $\PP^n$, then
$F=L_1^d + \ldots + L_\ell^d$, where the $L_i$ correspond to the points in $\Bbb Y$, as described above.

These containments are referred to as the {\it Apolarity Lemma} and one can find proofs in \cite{IaKa,RS00}.

Having a particular Waring Decomposition of $F$, or equivalently the ideal of a set of distinct points in $F^\perp$, will thus give us upper bounds for the rank of $F$.  We also need some good lower bounds for the rank of $F$.  The importance of finding such lower bounds was underscored in the papers of \cite{LandsbergTeitler2010} and in further work \cite{Teitler2014}.  In \cite{LandsbergTeitler2010}, generalizing a result of  Sylvester, a lower bound was found in terms of ranks of catalecticant matrices and dimensions of the singularity loci in the spaces defined by varieties coming from catalecticant ideals. Our Theorem \ref{firstthm} finds new lower bounds in terms of different invariants of $F$.

Our new approach to the study of the rank is particularly effective in the direction of Strassen's Conjecture. This famous conjecture was stated in the 1973 paper \cite{strassenconj} and is still open (for some recent progress see \cite{CCC14}). The symmetric version of Strassen's Conjecture can be stated as follows: the rank is additive on the sum of forms in different set of variable, that is
\[\mathrm{rk} (F_1+\ldots+F_m)=\mathrm{rk} (F_1)+\ldots+\mathrm{rk} (F_m)\]
if the forms $F_i$ are in distinct sets of variables. In \cite{carcatgermonomi} it was proved  that the conjecture holds if the forms $F_i$ are monomials. In Theorem \ref{coroll} we find several other families of summands for which Strassens's Conjecture is true.

The paper is organized in the following way. In Section \ref{basic}  we recall some of the basic ideas we will use. In Section \ref{lower} we introduce the notion of $e$-computability and use it to establish our new lower bound for the rank  of $F$. In Section \ref{ecomputabilita} we find several infinite families of forms which are $e$-computable and thus compute their rank. In Sections \ref{mainresultsececomp} and \ref{strassen} we show how useful the notion of $e$-computability is in dealing with Strassens's conjecture by giving many new examples of families of forms for which Strassens's conjecture is true.
In Section \ref{examples} we give an example of an infinite family of forms whose rank is computable by ad hoc methods. We show that the first member of this family is not $1$-computable.

%%%%%%%%%%%%%%%%%%%%
\section{Basic facts} \label{basic}
%%%%%%%%%%%%%%%%%%%%

Let
\[S=k[x_0 , \ldots  ,x_n] \mbox{ and } \hskip 2cm  T=k[X_0 ,\ldots, X_n ],\]
 where $k$ is an algebraically closed field of characteristic
zero.
We let $T$ act via differentiation on $S$  as above.

Given a homogeneous ideal $I\subseteq T$ we denote by $$HF(T/I,i)=
\dim_kT_i -\dim_k I_i$$ the {\it Hilbert function} of $T/I$ in degree $i$.
It is well known that the function $HF(T/I,i)$
is eventually a polynomial function with rational coefficients, and this polynomial is called the
{\it Hilbert polynomial} of $T/I$.  We say that an ideal $I\subseteq
T$ is {\it one dimensional} if the Krull dimension of $T/I$ is one,
equivalently the Hilbert polynomial of $T/I$ is some integer
constant, say $\ell$.  In the case that $I \subseteq T$ is one dimensional, then this eventually constant value of the Hilbert Function of $T/I$ is called the {\it
multiplicity} of $T/I$.  If, in addition, $I$ is a radical ideal,
then $I$ is the ideal of a set of $\ell$ distinct points in $\PP ^n$.  We will
use the fact that if $I$  is a saturated ideal and $T/I$ is one dimensional  of
multiplicity $\ell$, then $HF(T/I, i)$ is always at most $\ell$.

Our main tool is the {\it Apolarity Lemma}, the proof of which can
be found in \cite[Lemma 1.31]{IaKa}.

\begin{lem}\label{apolarityLEMMA} (Apolarity Lemma)
Let $\mathbb{X}=\{[L_1],\ldots,[L_\ell]\} \subset \PP(S_1)$ be a set of $\ell$ distinct points, corresponding to the linear forms $L_1,\ldots,L_\ell\in S_1$. If $F\in S_d$, then
\[F=c_1L_1^d+\ldots+c_\ell L_\ell^d,\]
for $c_1,\ldots,c_\ell\in k$, if and only if
\[I_\mathbb{X}\subset F^\perp.\]
\end{lem}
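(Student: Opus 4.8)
The plan is to derive the equivalence from the non-degeneracy of the apolarity pairing in degree $d$. The first step is to observe that the bilinear map $T_d \times S_d \to k$ sending $(g,G)$ to the scalar $g \circ G$ is a perfect pairing: in the monomial bases $\{X^a : |a| = d\}$ of $T_d$ and $\{x^b : |b| = d\}$ of $S_d$ one computes $X^a \circ x^b = 0$ for $a \neq b$ and $X^a \circ x^a = \prod_i a_i! \neq 0$, so its Gram matrix is invertible. Consequently, for any linear subspace $V \subseteq S_d$, writing $V^{\perp} := \{g \in T_d : g \circ v = 0 \text{ for all } v \in V\}$, we have $\dim V + \dim V^{\perp} = \dim T_d$ and $V^{\perp\perp} = V$.

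For the forward implication, assume $F = c_1 L_1^d + \cdots + c_\ell L_\ell^d$ with $L_i \leftrightarrow p_i$. Since $F$ is homogeneous, $F^{\perp}$ is a homogeneous ideal, so it suffices to test a homogeneous $g \in T_\delta$ lying in $I_{\mathbb{X}}$. If $\delta > d$ then $g \circ F = 0$ automatically; if $\delta \leq d$ then the identity $g \circ L^d = \frac{d!}{\delta!}\, g(a_0,\dots,a_n)\, L^{d-\delta}$ recalled above gives $g \circ F = \frac{d!}{\delta!}\sum_i c_i\, g(p_i)\, L_i^{d-\delta} = 0$ because $g(p_i) = 0$ for all $i$. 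Hence $I_{\mathbb{X}} \subseteq F^{\perp}$.

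For the reverse implication, set $W = \langle L_1^d, \dots, L_\ell^d \rangle_k \subseteq S_d$. Taking $\delta = d$ in the same identity yields $g \circ L_i^d = d!\, g(p_i)$, so $g \in W^{\perp}$ if and only if $g$ vanishes at every $p_i$; that is, $W^{\perp} = (I_{\mathbb{X}})_d$. Now assume $I_{\mathbb{X}} \subseteq F^{\perp}$. Reading this in degree $d$ (and assuming $F \neq 0$, the case $F = 0$ being trivial), it says $W^{\perp} = (I_{\mathbb{X}})_d \subseteq (F^{\perp})_d = (kF)^{\perp}$. Applying $(-)^{\perp}$ and using $V^{\perp\perp} = V$ from the first step, we get $kF = (kF)^{\perp\perp} \subseteq W^{\perp\perp} = W$, so $F$ is a $k$-linear combination of $L_1^d, \dots, L_\ell^d$, which is the desired Waring decomposition.

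The computations involved are all elementary; the one point to get right is that both implications can be handled degree by degree and that, for the converse, only the degree-$d$ piece of the containment $I_{\mathbb{X}} \subseteq F^{\perp}$ is ever used — components of $I_{\mathbb{X}}$ in degree $> d$ are vacuously contained in $F^{\perp}$ and contribute nothing. So the real engine of the argument is the perfectness of the apolarity pairing on $T_d \times S_d$ established in the first step.
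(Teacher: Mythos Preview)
Your proof is correct. The paper does not actually prove this lemma; it states it as a standard result and refers to \cite[Lemma 1.31]{IaKa} and \cite{RS00} for proofs, though the introduction sketches the forward implication using the same identity $g \circ L^d = (d!/\delta!)\, g(a_0,\ldots,a_n)\, L^{d-\delta}$ that you invoke. Your treatment of the reverse implication---identifying $(I_{\mathbb{X}})_d$ with the orthogonal of $W=\langle L_1^d,\ldots,L_\ell^d\rangle$ under the perfect degree-$d$ apolarity pairing and then using $V^{\perp\perp}=V$---is exactly the clean linear-algebra argument one finds in the cited references, so you have supplied what the paper delegates.
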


Note that the coefficients $c_i$ are necessary even if $k$ is algebraically closed since some of them could be zero; this is not a minimal decomposition. With the Apolarity Lemma in mind, we make the following definition.

\begin {defn}
a) If $F$ is a form in $S$ and  $\mathbb X \subset \PP^n$ is a set of reduced points for which $I_\mathbb X \subset F^\perp$, then we say that  $\mathbb X$ is {\it apolar} to $F$.

b) If $\mathbb X$ is apolar to $F$ and $| \mathbb X | \leq | \mathbb Y | $ for any other $\mathbb Y$ apolar to $F$, then we say that
$\X $  {\it minimally decomposes} $F$.
\end {defn}

We conclude with the following trivial, but useful, remark  (see Remark 2.3 of \cite {carcatgermonomi}).

\begin{rem}\label{leastvarREM}
The computation of the rank of  $F$ is independent of the polynomial ring in which we consider $F$.

More precisely,
consider a rank $r$ form $F\in k[x_0,\ldots,x_n]$.
Then $F$  has rank $r$   also if we consider $F$ as a form in
$k[x_0,\ldots,x_n,x_{n+1}, \ldots, x_{n+t}]$.
\end{rem}

\section{Lower bound for rank} \label{lower}

It is useful to recall the following well known results.

 \begin {rem} \label{sommahilb}
 Let $J\subset {T}$ be the ideal of a zero-dimensional scheme
and $t\in T_e$ a   homogeneous differentiation  of degree $e$. If $t$ is
not a zero divisor in $T/J$, then from the  exact sequences
\begin{eqnarray}  \label{sequenza}
0 \longrightarrow  (T/J)_{i-e}
\stackrel{ \cdot t} {\longrightarrow} (T/J )_i\longrightarrow
( T/ J+(t))_i \longrightarrow 0,
\end{eqnarray}
we get, for $s \gg 0$,
\begin{eqnarray}  \label{HF}
e \cdot HF(T/J,s)=\sum_{i=0}^s
HF(T/(J+(t)),i).
\end{eqnarray}

\end{rem}

\begin{lem}\label{colonperp}
Let $F(x_0,\ldots,x_n) \in S_d$, then
$$F^\perp : X_i = (X_i \circ F)^\perp
$$
\end{lem}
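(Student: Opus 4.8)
The plan is to prove the two inclusions $F^\perp : X_i \subseteq (X_i \circ F)^\perp$ and $(X_i\circ F)^\perp \subseteq F^\perp : X_i$ separately, working throughout with the apolarity (differential) pairing $T_\delta \times S_\delta \to k$ and the basic identity $X_i \circ (g\circ F) = g \circ (X_i \circ F)$, which holds because partial derivatives commute. First I would take a homogeneous $g \in T$ with $g \in F^\perp : X_i$, i.e. $X_i g \in F^\perp$, meaning $(X_i g)\circ F = 0$. Since the action of $T$ on $S$ is by composition of differential operators, $(X_i g)\circ F = g \circ (X_i \circ F)$, so this says precisely $g \circ (X_i\circ F) = 0$, i.e. $g \in (X_i\circ F)^\perp$. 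This argument is visibly reversible, which gives the reverse inclusion at the same time; extending from homogeneous elements to arbitrary elements is immediate since all the ideals involved are homogeneous.

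So the heart of the matter is the commutation identity $(X_i g)\circ F = X_i \circ (g \circ F) = g \circ (X_i \circ F)$ for the differential action, which I would justify by reducing to monomials $g = X^\alpha$ and invoking equality of mixed partials (Schwarz/Clairaut), then extending bilinearly. One subtlety worth a sentence: the equality $(X_ig)\circ F = g\circ(X_i\circ F)$ is an identity of forms, and it includes the degenerate case where $X_i \circ F = 0$ (e.g. if $x_i$ does not appear in $F$), in which case both $(X_i\circ F)^\perp$ and $F^\perp : X_i$ equal all of $T$ — one should note this is consistent, since $X_ig \circ F = g \circ (X_i\circ F) = g\circ 0 = 0$ for every $g$.

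I do not anticipate a genuine obstacle here; the only thing to be careful about is bookkeeping with degrees and the convention that $F^\perp$ contains everything of degree $> d$, together with making sure the colon ideal $F^\perp : X_i = \{ g \in T : X_i g \in F^\perp\}$ is taken in the right order (the principal ideal $(X_i)$ on the right of the colon). Everything then follows formally from the single commutation identity, so the write-up is short: state the identity, verify it on monomials, and read off both inclusions.
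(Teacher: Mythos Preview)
Your proposal is correct and follows essentially the same approach as the paper: the paper's proof is the single chain of equivalences
\[
g \in F^\perp : X_i \Longleftrightarrow (gX_i)\circ F = 0 \Longleftrightarrow g\circ(X_i\circ F)=0 \Longleftrightarrow g \in (X_i\circ F)^\perp,
\]
which is exactly your commutation-identity argument, only stated more tersely. Your added remarks on justifying commutation via mixed partials and on the degenerate case $X_i\circ F = 0$ are fine elaborations but not needed for the write-up.
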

\begin{proof}
Let $g \in T$ and suppose that we have $g \in F^\perp : X_i$. Now
\[g \in F^\perp : X_i \Longleftrightarrow
(g X_i ) \circ F = 0 \Longleftrightarrow
g \circ (X_i  \circ F) = 0 \Longleftrightarrow
g \in (X_i  \circ F) ^\perp,
\]
and the conclusion follows.
\end{proof}

We are now ready to state and prove our first theorem.

\begin{thm}\label{firstthm}
Let $F \in S_d$ and let $\X \subset  \PP  (T_1)$ be apolar to $F$ (so
 $I_\X \subset F^\perp$).   Let $I \subset T$ be any ideal generated in degree $e>0$  and let $ t \in I_e$. If
 $t$ is not a zero divisor in $T / (I_\X : I)$, then   for $s \gg 0$ we have
\[ e \cdot |\X| \geq \sum _{i=0}^s HF (T/(I_\X : I+(t)) , i)  \geq \sum _{i=0}^{s} HF (T/(F^\perp : I+( t) ), i) .
\]

\end{thm}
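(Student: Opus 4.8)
The plan is to split the asserted chain of inequalities into two pieces and establish each separately, both flowing from the exact sequence in Remark \ref{sommahilb} together with Lemma \ref{colonperp}.

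For the first inequality, $e\cdot|\X| \geq \sum_{i=0}^s HF(T/(I_\X:I+(t)),i)$, I would argue as follows. Since $I_\X$ is the saturated ideal of a finite set of points and $I$ is a homogeneous ideal, the colon ideal $I_\X:I$ is again a saturated one-dimensional ideal, and its multiplicity is at most $|\X|$ (it defines a subscheme of $\X$, as $I_\X \subseteq I_\X:I$). Hence $HF(T/(I_\X:I),s) \leq |\X|$ for all $s$, and in particular for $s\gg 0$. Now apply Remark \ref{sommahilb} with $J = I_\X:I$ and the given $t\in I_e$, which by hypothesis is a nonzerodivisor on $T/(I_\X:I)$: equation \eqref{HF} gives, for $s\gg 0$,
\[
e\cdot HF(T/(I_\X:I),s) = \sum_{i=0}^s HF(T/(I_\X:I+(t)),i).
\]
Combining with the bound on the Hilbert function yields $e\cdot|\X| \geq \sum_{i=0}^s HF(T/(I_\X:I+(t)),i)$, which is the first inequality.

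For the second inequality, I would compare the ideals $I_\X:I$ and $F^\perp:I$. Since $\X$ is apolar to $F$ we have $I_\X \subseteq F^\perp$, hence $I_\X:I \subseteq F^\perp:I$, and therefore $(I_\X:I)+(t) \subseteq (F^\perp:I)+(t)$. Passing to quotients, the natural surjection $T/((I_\X:I)+(t)) \twoheadrightarrow T/((F^\perp:I)+(t))$ shows that $HF(T/(I_\X:I+(t)),i) \geq HF(T/(F^\perp:I+(t)),i)$ for every $i$. Summing over $i=0,\ldots,s$ gives the second inequality. I should also note that $F^\perp:I$ can be understood concretely via Lemma \ref{colonperp}: when $I = (X_{i_1},\ldots,X_{i_k})$ is generated by variables, $F^\perp:I$ is the annihilator of the iterated partial derivative, though for the proof of the theorem only the containment $I_\X:I \subseteq F^\perp:I$ is needed.

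The main subtlety I anticipate is bookkeeping about saturation and one-dimensionality: one must be sure that $I_\X:I$ is genuinely a saturated one-dimensional ideal so that the "Hilbert function eventually constant and bounded by the multiplicity $\leq |\X|$" principle recalled in Section \ref{basic} applies, and that "$s\gg 0$" can be chosen uniformly so that all three eventually-stable quantities have stabilized. This is routine but is the one place where the hypotheses (points reduced, $I_\X$ saturated) are actually used. The nonzerodivisor hypothesis on $t$ is exactly what makes the sequence \eqref{sequenza} exact on the left, so no further work is needed there; it is imposed by assumption rather than verified.
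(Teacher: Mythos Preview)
Your proof is correct and follows essentially the same route as the paper: use Remark \ref{sommahilb} on $J=I_\X:I$ to turn the sum into $e$ times the multiplicity of a subscheme of $\X$ (the paper names this subscheme $\mathbb{Y}$, the points of $\X$ off the zero set of $I$, which is slightly more precise than your ``subscheme of $\X$'' but amounts to the same bound), and then use $I_\X\subseteq F^\perp$ for the second inequality. One small inaccuracy in your aside: when $I=(X_{i_1},\ldots,X_{i_k})$, Lemma \ref{colonperp} gives $F^\perp:I=\bigcap_j (X_{i_j}\circ F)^\perp$, an \emph{intersection} of annihilators of first partials, not the annihilator of an iterated derivative; as you note, this is irrelevant to the proof itself.
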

\begin {proof}
Note that $ t $ is a non-zero divisor in $T/ (I_\X : I)$ and that $ I_\X : I$ is the saturated ideal of $\mathbb{Y}\subseteq\X$ consisting of all points of  $\X$ not lying on the zero set of $I$.
Thus, by Remark \ref {sommahilb}, we have
\[
{\frac{1} e} \cdot \sum_{i=0}^s HF(T/(I_\X:I + (t)),i)= |\mathbb{Y}|
\]
for $s \gg 0$.
Moreover  for any $s$,
\[\sum_{i=0}^s HF(T/(I_\X:I+ (t)),i) \geq
\sum_{i=0}^s HF(T/(F^\perp:I+ (t)),i) ,\]
since $I_\X $ is contained in $F^\perp$,
 and so we are done .

\end{proof}

 The following corollary gives  a useful lower bound for the rank of $F$.

\begin {cor} \label{maincor}
Let $F \in S_d$.  Let $I \subset T$ be any ideal generated in degree $e>0$ and let $ t $ be a general form in $ I_e$. For $s \gg 0$ we have

\[ \mathrm{rk} (F) \geq \left ({1\over e} \right)
\sum_{i=0}^s HF(T/(F^\perp:(t)+ (t)),i) .\]

\end{cor}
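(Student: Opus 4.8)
The plan is to derive Corollary \ref{maincor} from Theorem \ref{firstthm} by choosing the ideal $I$ appropriately and invoking the Apolarity Lemma. First I would pick a set of points $\X \subset \PP(T_1)$ that minimally decomposes $F$, so that by Lemma \ref{apolarityLEMMA} we have $I_\X \subset F^\perp$ and $|\X| = \mathrm{rk}(F)$. This puts us exactly in the hypothesis of Theorem \ref{firstthm}, provided we supply a suitable ideal $I$ and element $t \in I_e$.

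The natural choice is to take $t$ to be a general form of degree $e$ and set $I = (t)$, the principal ideal it generates. With this choice $e \cdot |\X| = e \cdot \mathrm{rk}(F)$, so the left-hand inequality of Theorem \ref{firstthm} becomes precisely $e \cdot \mathrm{rk}(F) \geq \sum_{i=0}^s HF(T/(F^\perp : (t) + (t)), i)$ for $s \gg 0$, which after dividing by $e$ is the claimed bound. The one thing that must be checked is that the hypothesis of Theorem \ref{firstthm} is satisfied, namely that $t$ is not a zero divisor in $T/(I_\X : I) = T/(I_\X : (t))$. Since $I_\X : (t)$ is the saturated ideal of the subset $\mathbb{Y} \subseteq \X$ of points of $\X$ not lying on the hypersurface $\{t = 0\}$, it is the ideal of a reduced zero-dimensional scheme, hence $T/(I_\X:(t))$ is a one-dimensional Cohen--Macaulay ring with no embedded components. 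A general form $t$ of degree $e$ avoids all the (finitely many) points of $\mathbb{Y}$ and therefore does not vanish at any of them, so multiplication by $t$ is injective on $T/(I_\X:(t))$; this is where generality of $t$ is used. I expect this verification that $t$ is a non-zero-divisor to be the only real content — everything else is a direct substitution into the theorem.

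Having checked the hypothesis, the chain of inequalities in Theorem \ref{firstthm} applies verbatim, and the second inequality there (which uses $I_\X \subseteq F^\perp$) already gives the comparison with $F^\perp$. Thus for $s \gg 0$,
\[
e \cdot \mathrm{rk}(F) = e \cdot |\X| \geq \sum_{i=0}^s HF(T/(I_\X : (t) + (t)), i) \geq \sum_{i=0}^s HF(T/(F^\perp : (t) + (t)), i),
\]
and dividing through by $e$ yields the stated bound. The main obstacle, such as it is, is making precise that a \emph{general} $t \in I_e$ simultaneously misses every point of $\mathbb{Y}$ and hence is a non-zero-divisor; since $\mathbb{Y}$ is finite and the forms vanishing at a given point form a proper linear subspace of $T_e$, a general $t$ lies outside the finite union of these subspaces, so the condition is open and dense and the argument goes through.
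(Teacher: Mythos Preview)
Your proof is correct and takes essentially the same approach as the paper: choose $\X$ minimally decomposing $F$ and apply Theorem~\ref{firstthm}. One minor remark: with your choice $I=(t)$ the non-zero-divisor condition is automatic (by definition $\mathbb{Y}$ consists exactly of the points where $t$ does \emph{not} vanish, so no generality is needed there); the paper instead applies Theorem~\ref{firstthm} with the given $I$, where generality of $t\in I_e$ is genuinely used to avoid the points of $\X\setminus V(I)$, and then implicitly passes from $F^\perp:I$ to the larger $F^\perp:(t)$.
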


\begin{proof}
Let $\X$ minimally decompose $F$, so $|\X| = \rk (F)$. Since $ t \in I_e$ is a general form, then $t$ is not a zero divisor in $T / I_\X : I$. So by Theorem \ref{firstthm} we are done.
\end{proof}

Notice that the summation on the right side cannot decrease as $s$ increases  and, indeed, the summands are all zero for $s$ big enough. Hence we often use the corollary above with $s= \infty$.
%%%%%%%%%%%%%%%%%%%%%%%%%%%%%%

\begin {defn} \label{lindef}

Let $F \in S_d$ and $e>0$ be an integer.  We  say that $F$ {\it is $e$-computable} if  there exists an
 ideal $I \subset T$  generated in degree $e$  such that for general  $ t \in I_e$ we have
 \[ \rk (F) = \left ({1\over e} \right) \sum_{i=0}^\infty HF(T/(F^\perp: I +(t)),i) .\]
In this case we say that the {\it rank of F is computed by $I$ and $t$}. In case $I=(t) $,
we simply say that the {\it rank of F is computed by  $t$ }

\end {defn}

 \begin {prop}\label {prop1ecomp}

Let $F \in S_d$ and assume that   $\rk (F)$  is computed by $I$ and $t$.  If $ \Bbb X$ minimally decomposes $F$
and if we let $I_{\X' }= I_ \X :I $, then $\X=\X '$
and $I_{ \X } +(t)= F^\perp +(t)$.

\end {prop}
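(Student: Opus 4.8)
The plan is to exploit the chain of inequalities in Theorem \ref{firstthm} and to observe that, under the hypothesis that $\rk(F)$ is computed by $I$ and $t$, every inequality in that chain must in fact be an equality. Concretely: let $\X$ minimally decompose $F$, so $|\X| = \rk(F)$, and let $\X'$ be the subscheme of $\X$ cut out by $I_{\X'} = I_\X : I$, i.e. the points of $\X$ not lying on the zero locus of $I$. Since $t$ is general in $I_e$ it is a non-zero divisor on $T/(I_\X : I)$, and Theorem \ref{firstthm} gives
\[
e\cdot |\X| \;\geq\; \sum_{i=0}^{\infty} HF\bigl(T/(I_{\X'} + (t)),i\bigr) \;\geq\; \sum_{i=0}^{\infty} HF\bigl(T/(F^\perp : I + (t)),i\bigr) \;=\; e\cdot \rk(F),
\]
where the last equality is precisely Definition \ref{lindef}. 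Because $|\X| = \rk(F)$, the two ends agree, so both inequalities are equalities.

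From the first equality, $\sum_i HF(T/(I_{\X'}+(t)),i) = e\cdot|\X| = e\cdot|\X'| + e\cdot(|\X|-|\X'|)$; but by Remark \ref{sommahilb} applied to the saturated ideal $I_{\X'}$ (which is one-dimensional of multiplicity $|\X'|$), this sum equals $e\cdot|\X'|$. Hence $|\X| = |\X'|$, and since $\X' \subseteq \X$ as sets of points this forces $\X = \X'$ — equivalently, no point of $\X$ lies on the zero locus of $I$, so $I_\X : I = I_\X$. For the second statement, the equality of the two sums $\sum_i HF(T/(I_{\X'}+(t)),i) = \sum_i HF(T/(F^\perp:I+(t)),i)$ together with the termwise inequality $HF(T/(I_{\X'}+(t)),i) \geq HF(T/(F^\perp:I+(t)),i)$ (coming from $I_{\X'} = I_\X:I \subseteq F^\perp:I$, which holds since $I_\X \subseteq F^\perp$) shows these Hilbert functions agree in every degree; since $I_{\X'}+(t) \subseteq F^\perp:I+(t)$, equal Hilbert functions force the ideals to be equal. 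Finally $I_\X : I = I_\X$ and the fact that $(t) \subseteq I$ so that $F^\perp : I + (t) \subseteq F^\perp + (t)$, combined with the reverse containment one gets from $I_{\X}+(t) = (I_\X:I)+(t) = (F^\perp:I)+(t) \subseteq F^\perp+(t)$ and $I_\X \subseteq F^\perp$, yield $I_\X + (t) = F^\perp + (t)$; here one uses that both sides are ideals containing $t$ with the same (finite) colength, so the containment $I_\X+(t)\subseteq F^\perp+(t)$ is an equality.

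I expect the main subtlety to be the bookkeeping in the very last step — pinning down that $I_\X + (t)$ and $F^\perp + (t)$ coincide, rather than just $(I_\X : I) + (t) = (F^\perp : I) + (t)$. The cleanest route is: once we know $\X = \X'$ we have $I_\X : I = I_\X$, and by Lemma \ref{colonperp}-type reasoning (or directly from $I_\X \subseteq F^\perp$) also $F^\perp : I \supseteq F^\perp$; combined with the equality $(I_\X:I)+(t) = (F^\perp:I)+(t)$ just established, and the chain $I_\X + (t) \subseteq F^\perp + (t) \subseteq (F^\perp : I) + (t) = (I_\X : I) + (t) = I_\X + (t)$, all the containments collapse. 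The only place one must be slightly careful is justifying $(F^\perp:I)+(t) \subseteq F^\perp+(t)$ is \emph{not} what we want — rather we want $F^\perp+(t)\subseteq (F^\perp:I)+(t)$, which is automatic from $F^\perp \subseteq F^\perp:I$ — so the sandwich closes correctly. Everything else is a direct application of Remark \ref{sommahilb}, Theorem \ref{firstthm}, and Definition \ref{lindef}, with no computation heavier than comparing Hilbert functions degree by degree.
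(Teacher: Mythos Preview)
Your proposal is correct and follows essentially the same route as the paper: collapse the chain of inequalities from Theorem~\ref{firstthm} to equalities, deduce $|\X|=|\X'|$ hence $\X=\X'$, obtain $(I_\X:I)+(t)=(F^\perp:I)+(t)$ from equality of Hilbert functions plus containment, and then close with the sandwich
\[
I_\X+(t)\subseteq F^\perp+(t)\subseteq (F^\perp:I)+(t)=(I_\X:I)+(t)=I_\X+(t).
\]
Your write-up is a bit circuitous --- the aside about ``$(t)\subseteq I$ so that $F^\perp:I+(t)\subseteq F^\perp+(t)$'' is false (you correctly retract it a few lines later), and the colength argument you float is unnecessary once the sandwich is in place --- but the substance matches the paper's proof exactly.
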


\begin{proof} Since $\rk (F) >0$, then $ I_ \X :I \neq T$ and, since $t$ is general,  we may assume  that $t$ is a non-zero divisor in $T / I_\X : I$.
  By (\ref{sequenza})  we get
  $$|\X' |= \left ({1\over e} \right) \sum_{i=0}^\infty  HF ( T/ (I_ \X : I+(t)) , i ).$$
 Hence we have
$$  \rk(F) = |\X|\geq  |\X'|= \left ({1\over e} \right)  \sum_{i=0}^\infty HF ( T/ (I_ \X : I +(t)) , i ) $$

$$\geq  \left ({1\over e} \right)  \sum_{i=0}^s  HF (T/ (F^\perp  : I + (t)),i ) = \rk(F).
$$
It follows that $\X=\X '$ and $I_ \Bbb X : I +(t)= F^\perp  : I + (t)$.  Hence

$$ F^\perp   + (t) \subseteq   F^\perp  : I + (t)= I_ \X : I +(t)= I_ \Bbb {X'} +(t)
\subset I_ \X +(t) \subseteq F^\perp   + (t),$$
and the conclusion follows.

\end{proof}

%%%%%%%%%%%%%%%%%%%%%%%%%%%%%%

\section{Forms which are $e$-Computable}\label{ecomputabilita}

In this section we give several examples of forms which are $e$-computable for various values of $e$.

 We start by considering forms in two variables, that is $F\in S =k[x_0,x_1]$, and we recall Sylvester's algorithm to compute the rank of $F$, see \cite{comasseiguer}. Since
 $F^\perp$ is a Gorenstein artinian ideal and $F^\perp\subset T=k[X_0,X_1]$, we have that
 $$
 F^\perp=(h_1, h_2)
 $$
 where $ \deg h_1 = d_1 \leq \deg h_2=d_2 $ and $d_1 + d_2 = \deg F+2$ , $h_1$ and $h_2$ having no common factor. If $h_1$ is square free then $\rk(F)=d_1$, otherwise $\rk(F)=d_2$.

\begin {prop} \label{2variab}
If $F\in S =k[x_0,x_1]$ and $F^\perp= (h_1,h_2)$ as above, then

(i) if $h_1$  is not square free and $ h_1=t^2 \widetilde h_1$, then $F$ is $e$-computable, where $e = \deg t$;

(ii) if $h_1$ is square free and $d_1 < d_2$, then  $F$ is $e$-computable for $e \leq { {d_2 - d_1 + 1} \over 2}$;

(iii) if $d_1 = d_2$ we can assume  we are in case (i).

\end{prop}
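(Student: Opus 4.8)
The plan is to work out, in each of the three cases, an explicit ideal $I$ (often principal, $I=(t)$) that computes $\rk(F)$ via Definition \ref{lindef}, using Sylvester's description of $F^\perp = (h_1,h_2)$ with $\deg h_1 = d_1 \le \deg h_2 = d_2$ and $d_1+d_2 = d+2$. The key computational engine is Corollary \ref{maincor}: for a general $t\in I_e$ we always have the lower bound $\rk(F) \ge \frac1e \sum_{i\ge 0} HF(T/(F^\perp:(t)+(t)),i)$ (and more generally the version with $I$), so in each case it suffices to (a) compute $F^\perp : I$ and $F^\perp : I + (t)$ explicitly in the PID-like setting $T=k[X_0,X_1]$, (b) sum the Hilbert function, and (c) check the answer equals $e\cdot\rk(F)$, where $\rk(F)$ is given by Sylvester's rule ($d_1$ if $h_1$ squarefree, else $d_2$).

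For case (i), where $h_1 = t^2\widetilde h_1$ is not squarefree with $e=\deg t$, we have $\rk(F)=d_2$. I would take $I=(t)$. Since $t\mid h_1$ but $t^2\nmid$ the gcd structure appropriately, $F^\perp : (t)$ should be the ideal $(t\widetilde h_1,\, h_2')$ for a suitable $h_2'$; the point is that $t$ is a nonzerodivisor on $T/(F^\perp:(t))$ (as that quotient is Cohen–Macaulay of dimension $1$ in two variables), so Remark \ref{sommahilb} applies and $\sum_i HF(T/(F^\perp:(t)+(t)),i) = e\cdot HF(T/(F^\perp:(t)),s)$ for $s\gg0$, which equals $e$ times the degree of the length-$\deg$-scheme cut out, and one checks this degree is exactly $d_2$. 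The arithmetic here is: $\deg(V(F^\perp:(t)))$ counts the points of the apolar scheme off $V(t)$, and Sylvester's square-free criterion is precisely engineered so that removing the $t$-factor leaves $d_2$ points.

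For case (ii), $h_1$ squarefree and $d_1<d_2$, so $\rk(F)=d_1$. Here I would choose $I=(h_1)$ or more cleverly an ideal generated in degree $e$ for each $e\le \frac{d_2-d_1+1}{2}$; the idea is that $F^\perp : I$ collapses onto the $d_1$ points cut out by $h_1$, i.e. $F^\perp : (h_1) = (h_1)$ (since $h_1$ is squarefree these are $d_1$ reduced points and $(h_1)$ is already saturated), and then $\sum_i HF(T/((h_1)+(t)),i) = e\cdot d_1$ for general $t$ of degree $e$ by Bézout/Remark \ref{sommahilb}. The bound $e\le (d_2-d_1+1)/2$ is exactly the range in which such a general degree-$e$ form $t$ can be chosen with $(h_1):(t)$ (or the relevant colon) behaving correctly — this is a dimension count on the space of degree-$e$ forms versus the constraints, and verifying it is where I expect the bookkeeping to be most delicate.

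Case (iii) is a normalization: if $d_1=d_2$ then $h_1$ cannot be squarefree — because two squarefree coprime forms of equal degree summing to $d+2$ would force $\rk$ to be $d_1=(d+2)/2$, but one checks a generic such $F$ would then have both generators squarefree, and by swapping $h_1,h_2$ (they play symmetric roles when $d_1=d_2$) we may always arrange to be in the hypothesis of (i). So the only real work is in (i) and (ii).

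The main obstacle will be case (ii): controlling $F^\perp : I$ and the saturation when $I$ is generated in degree $e>1$, and pinning down exactly why the threshold $(d_2-d_1+1)/2$ is the right one rather than something larger. The square-free/not-square-free dichotomy and the fact that $T$ has only two variables (so colon ideals are easy to compute via gcd's) should make cases (i) and (iii) largely mechanical, but the optimal range in (ii) requires genuinely matching the degree of $t$ against the "slack" $d_2-d_1$ in Sylvester's resolution.
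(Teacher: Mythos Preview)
Your proposal has genuine gaps in all three parts, though of varying severity.

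In part (i) your choice $I=(t)$ is correct, and $F^\perp:(t)=(t\widetilde h_1,h_2)$ is right. But your claim that $T/(F^\perp:(t))$ is Cohen--Macaulay of dimension $1$ is false: $F^\perp$ is Artinian, hence so is $F^\perp:(t)\supseteq F^\perp$. In particular $t$ is automatically a zerodivisor there, Remark \ref{sommahilb} does not apply, and $HF(T/(F^\perp:(t)),s)=0$ for $s\gg0$. The paper instead just observes that $F^\perp:(t)+(t)=(t\widetilde h_1,h_2,t)=(t,h_2)$ is a complete intersection of length $e\cdot d_2$, and reads off the Hilbert function sum directly.

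Part (ii) is where your plan really breaks. Taking $I=(h_1)$ gives $F^\perp:(h_1)=T$ (since $h_1\in F^\perp$ already), not $(h_1)$; your formula $\sum HF = e\cdot d_1$ then collapses to $0$. The actual idea, which you do not identify, is to take $t$ to be a degree-$e$ \emph{factor of $h_2$}. Then $F^\perp:(t)=(h_1,h_2/t)$, and $F^\perp:(t)+(t)=(t,h_1,h_2/t)$. The threshold $e\le(d_2-d_1+1)/2$ is exactly the condition $\deg(h_2/t)=d_2-e\ge e+d_1-1$, which guarantees $h_2/t$ already lies in $(t,h_1)$ (since $(t,h_1)$ contains everything of degree $\ge e+d_1-1$). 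So $F^\perp:(t)+(t)=(t,h_1)$, a complete intersection of length $e\cdot d_1$. This is the bookkeeping you anticipated being delicate, but it hinges entirely on choosing $t\mid h_2$, which your proposal never does.

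Part (iii) is also incorrect: it is \emph{not} true that $h_1$ must fail to be squarefree when $d_1=d_2$. For instance $F=x_0^2+x_1^2$ has $F^\perp=(X_0X_1,\,X_0^2-X_1^2)$ with both generators squarefree. The paper's argument is different: when $d_1=d_2$ one may replace $h_1$ by any element of the pencil $\lambda h_1+\mu h_2$, and the discriminant (a nonconstant polynomial in $\lambda:\mu$) vanishes somewhere, producing a non-squarefree generator.
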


\begin {proof}

(i)  $h_1$ is not square free, so $\rk(F)=d_2$.

Since in  this case,  $h_1=t^2 \widetilde h_1$, it is easy to see that  $F^\perp:(t) = (t
\widetilde h_1,h_2)$.
It follows that $F^\perp:(t)+(t)=(t,h_2)$.
Noting that $(t,h_2)$  is a complete intersection of degree $e \cdot d_2$, we have $\sum_{i=0}^\infty  HF (T/ (F^\perp  : (t) + (t)),i ) = e \cdot d_2 = e\cdot \rk(F),$
and this completes the proof of (i).

(ii)  $h_1$  is square free  and $  d_1 < d_2$, so $\rk(F)=d_1$.

Let $t$ be  a  form of degree $e \leq { {d_2 - d_1 + 1} \over 2}$ such that $t | h_2$. We claim that
$$F^\perp:(t)+(t)=(t,h_1).$$
It is easy to show that  $F^\perp:(t)=(h_1,h_2/t),$ hence
$F^\perp:(t)+(t)=(t,h_1, h_2/t)$. But $(t,h_1) $ contains all forms of degree at least $e+d_1 -1$ , and $\deg h_2/t = d_2-e \geq e+ d_1 -1$.
Thus $(t,h_1,h_2/t) = (t,h_1) $, and we have proved the claim.
 Hence, $$\sum_{i=0}^\infty  HF (T/ (F^\perp  : (t) + (t)),i ) = e \cdot d_1 = e \cdot  \rk(F).$$

(iii)  If  $d_1=d_2$ then, using the discriminant of a general combination of $h_1$ and $h_2$,  we can  assume that $h_1$ is not square free.
\end{proof}

We now consider monomials in $S= k[x_0, \ldots, x_n]$.
It is shown in \cite {carcatgermonomi} that any monomial is 1-computable. In the next  proposition we generalize this fact.

\begin {prop}\label{monomi} Let $F = x_0^{a_0} x_1^{a_1} \cdots x_n^{a_n}$ where $0 <a_0 \leq a_1 \leq \ldots \leq a_n$.
Then $F$ is $e$-computable for
$$1 \leq e \leq { {a_0 +1 }\over 2 }.$$
\end{prop}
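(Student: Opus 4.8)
The plan is to exhibit, for each $e$ in the stated range, an explicit ideal $I\subset T$ generated in degree $e$ together with a general $t\in I_e$ for which Corollary \ref{maincor} becomes an equality. The natural candidate is $I=(X_0^e)$, i.e. $t=X_0^e$, since the variable $x_0$ carries the smallest exponent $a_0$ and differentiating by $X_0^e$ should still leave a monomial. Concretely, $X_0^e\circ F$ is a scalar multiple of $x_0^{a_0-e}x_1^{a_1}\cdots x_n^{a_n}$, a monomial again (this uses $e\le a_0$, which is implied by $e\le (a_0+1)/2$ for $a_0\ge 1$). By Lemma \ref{colonperp} applied $e$ times, $F^\perp:(X_0^e)=(X_0^e\circ F)^\perp=(x_0^{a_0-e}x_1^{a_1}\cdots x_n^{a_n})^\perp$, so one needs to understand $F^\perp:(t)+(t)$ and sum its Hilbert function.

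The key computational input is the known description of $(x_0^{b_0}\cdots x_n^{b_n})^\perp$: by the monomial case in \cite{carcatgermonomi}, this apolar ideal is $(X_0^{b_0+1},X_1^{b_1+1},\dots,X_n^{b_n+1})$, a complete intersection, and the rank of the monomial is $\prod_{i\ge 1}(b_i+1)$ when $b_0\le b_1\le\cdots\le b_n$. So first I would record that $F^\perp:(X_0^e)=(X_0^{a_0-e+1},X_1^{a_1+1},\dots,X_n^{a_n+1})$. Adding $t=X_0^e$ back gives
\[
F^\perp:(X_0^e)+(X_0^e)=(X_0^e,\,X_1^{a_1+1},\dots,X_n^{a_n+1}),
\]
because $a_0-e+1\ge e$ exactly when $e\le(a_0+1)/2$, so the generator $X_0^{a_0-e+1}$ is absorbed. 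This last ideal is a monomial complete intersection, hence Artinian, with
\[
\sum_{i=0}^{\infty}HF\bigl(T/(F^\perp:(X_0^e)+(X_0^e)),i\bigr)
= e\cdot\prod_{j=1}^{n}(a_j+1).
\]
Dividing by $e$ yields $\prod_{j=1}^n(a_j+1)$, which is precisely $\rk(F)$ by the monomial rank formula. Since Corollary \ref{maincor} gives the reverse inequality $\rk(F)\ge\frac1e\sum HF$, we get equality, and $I=(X_0^e)$, $t=X_0^e$ witness that $F$ is $e$-computable. (Taking $t$ general in $I_e=\langle X_0^e\rangle$ just rescales, so genericity is automatic here.)

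The only real subtlety is the inequality $a_0-e+1\ge e$ needed to collapse the generator $X_0^{a_0-e+1}$ into $(X_0^e)$ — this is where the bound $e\le(a_0+1)/2$ enters and must be checked carefully (note it forces $a_0-e+1\ge e\ge 1$, so $X_0^e\circ F$ is genuinely a nonzero monomial and the colon computation via Lemma \ref{colonperp} is legitimate). Everything else is a direct application of the monomial apolarity description from \cite{carcatgermonomi} plus the complete-intersection Hilbert function count already used in Proposition \ref{2variab}(i). A remark worth including: the argument also shows $\X=\X'$ in the sense of Proposition \ref{prop1ecomp}, so the minimal decomposition of $F$ restricts nicely under the colon by $I$.
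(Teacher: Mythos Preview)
Your proposal is correct and follows essentially the same route as the paper: take $I=(X_0^e)$, use Lemma \ref{colonperp} (iterated) to identify $F^\perp:(X_0^e)$ with $(x_0^{a_0-e}x_1^{a_1}\cdots x_n^{a_n})^\perp=(X_0^{a_0-e+1},X_1^{a_1+1},\dots,X_n^{a_n+1})$, absorb $X_0^{a_0-e+1}$ into $(X_0^e)$ via the inequality $a_0-e+1\ge e$, and read off the length of the resulting monomial complete intersection as $e\prod_{j\ge 1}(a_j+1)=e\cdot\rk(F)$. The paper's proof is just a terser version of exactly this computation.
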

\begin {proof}
We know that  $\rk (F) = \Pi _{i=1}^n (a_i+1)$  (see \cite  {carcatgermonomi}). Now
\[F^\perp : (X_0^e) +  (X_0^e) = (x_0^{a_0-e} x_1^{a_1} \cdots x_n^{a_n})^\perp + (X_0^e)
=(X_0^{a_0-e+1},  X_1^{a_1+1},  \ldots, X_n^{a_n+1}, X_0^e)
\]
\[
=( X_1^{a_1+1},  \ldots, X_n^{a_n+1}, X_0^e).
\]
Hence
\[\sum_{i=0}^\infty HF(T/(F^\perp: (X_0^e) +(X_0^e)),i)= e \cdot  \Pi _{i=1}^n (a_i+1)= e \cdot \rk(F).
\]
\end {proof}

\begin {rem} \label{ecompxmonomi-formebin} It would be interesting to know if the forms of Propositions \ref{2variab} and \ref{monomi} are $e$-computable for $e$'s different from those described in the two propositions.
\end{rem}

In the following propositions we exhibit several other families of $e$-computable forms.

Consider
\[F=x_0^a(x_1^b + \ldots + x_n^b).\]
Since, both for $n=1$ and, by a change of coordinates, for $b=1$,  $F$ is a monomial, we skip those known  cases (see\cite{carcatgermonomi}).

\begin {prop} \label{a+1geqb}

Let $ b \geq2$, $n \geq 2$ and let
\[F = x_0^a(x_1^b + \ldots + x_n^b) \in S =k[x_0, \ldots , x_n].
\]
If $a+1 \geq b$, then $F$ is 1-computable, the rank of $F$ is computed by $I = (X_1, \ldots , X_n)  $ and  a general linear form $t\in I$, and we have

 \[ \rk (F) = (a+1)n.\]

\end{prop}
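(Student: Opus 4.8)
The plan is to exhibit an explicit apolar set for $F$ of size $(a+1)n$, giving the upper bound $\rk(F) \le (a+1)n$ via the Apolarity Lemma, and then to use Corollary~\ref{maincor} with the ideal $I = (X_1, \ldots, X_n)$ and a general linear $t \in I_1$ to obtain a matching lower bound, thereby simultaneously proving the rank formula and $1$-computability.

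First I would compute $F^\perp$ explicitly. Since $F = x_0^a(x_1^b + \cdots + x_n^b)$, differentiating shows that $X_i^b \circ F - X_j^b \circ F = 0$ for $1 \le i,j \le n$, that $X_i^{b+1} \in F^\perp$, that $X_0^{a+1} \in F^\perp$, and that $X_iX_j \in F^\perp$ for $1 \le i < j \le n$ (distinct variables among $x_1, \ldots, x_n$ never appear in a common monomial). The key computation is then $F^\perp : I + (t)$ where $I = (X_1, \ldots, X_n)$ and $t$ is a general linear form in $I_1$, say (after a linear change among $X_1, \ldots, X_n$) $t = X_1$. One checks that $F^\perp : (X_1, \ldots, X_n)$ is the ideal $(X_0^a x_1^{b-1}, \ldots)^\perp$-type apolar ideal of the "derivative" $x_0^a(x_1^{b-1} + \cdots)$—more precisely, using Lemma~\ref{colonperp} iteratively, colon by the whole ideal $I$ kills one degree off the $x_1, \ldots, x_n$ part—and that after adding back $(t) = (X_1)$ the resulting ideal contains $X_0^{a+1}$, $X_1$, $X_i^{?}$, and enough to force the quotient to have Hilbert function supported in a controlled range. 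The arithmetic to be nailed down is that $\sum_{i=0}^\infty HF(T/(F^\perp : I + (t)), i) = (a+1)n$; combined with $e = 1$ this gives $\rk(F) \ge (a+1)n$.

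For the upper bound I would produce an apolar set $\X$ of $(a+1)n$ reduced points. A natural candidate: take the $n$ coordinate "axes" directions $x_0, x_i$ for $i = 1, \ldots, n$, and on each such $\PP^1$ put $a+1$ points realizing a Waring decomposition of the binary form $x_0^a x_i^b$... but that has rank depending on $b$, not $a+1$, so instead one should decompose cleverly across the axes. Since $a+1 \ge b$, the binary form $x_0^a x_i^b$ has rank... actually here the relevant point is that $x_0^a x_i^b$, viewed inside more variables, can be absorbed: the honest approach is to note $x_0^a(x_1^b + \cdots + x_n^b)$ and build $\X$ from $n$ groups of $a+1$ collinear points, the $j$-th group lying on the line through $[1:0:\cdots:0]$ and $[\lambda:\ldots]$, chosen so that on each line the restricted ideal is generated by a degree-$(a+1)$ form and a linear form, matching $F^\perp : I + (t)$ by Proposition~\ref{prop1ecomp}. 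Concretely I expect $\X$ to be the union over $i=1,\dots,n$ of the $a+1$ points $[\zeta : 0 : \cdots : 0 : 1 : 0 : \cdots : 0]$ ($1$ in slot $i$) with $\zeta$ ranging over appropriate scalars, using the binary identity that makes $x_0^a x_i^b$ apolar to $a+1$ points when $a+1 \ge b$ (which is exactly the Sylvester bound $\rk(x_0^a x_i^b) = \max(a+1, b+1) = a+1$).

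The main obstacle I anticipate is the lower-bound computation: correctly identifying the saturation/colon ideal $F^\perp : I$ and then the Hilbert function of $T/(F^\perp : I + (t))$. One must argue that a general linear form $t \in I_1$ is a non-zerodivisor on $T/(I_\X : I)$ (automatic once $\X$ minimally decomposes $F$, by Corollary~\ref{maincor}), and then carry out the graded-ring bookkeeping showing the sum of Hilbert function values is exactly $(a+1)n$ and not larger. The secondary subtlety is verifying that the coordinate-axes point set really is apolar, i.e. that $I_\X \subset F^\perp$, which reduces to the binary sub-case on each axis plus the vanishing of all the cross terms $X_iX_j$; since $a + 1 \ge b$ is precisely what makes the binary rank equal $a+1$, this hypothesis enters exactly here.
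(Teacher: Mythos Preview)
Your overall strategy matches the paper's, but both halves have gaps as written.

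For the upper bound, your final claim that $\rk(x_0^a x_i^b)=\max(a+1,b+1)=a+1$ under the hypothesis $a+1\ge b$ is false at the boundary $a+1=b$: there $\max(a+1,b+1)=b+1=a+2$, so decomposing each monomial $x_0^a x_i^b$ separately on its coordinate axis yields $(a+2)n$ points, not $(a+1)n$. (The paper even remarks, immediately after this proposition, that in this boundary case $\rk(F)<\sum_i\rk(x_0^ax_i^b)$.) The paper's construction is genuinely different: since $X_iX_j\in F^\perp$ for all $1\le i<j\le n$, one places the points on the $n$ coordinate lines defined by $(X_iX_j)_{i<j}$ and cuts them by a \emph{single} degree-$(a+1)$ form lying in $F^\perp$, namely
\[
X_0^{a+1}+\bigl(X_1^{a+1-b}+\cdots+X_n^{a+1-b}\bigr)\bigl((n-1)X_1^b-X_2^b-\cdots-X_n^b\bigr),
\]
which meets each line in $a+1$ distinct points.

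For the lower bound, you cannot change coordinates among $X_1,\dots,X_n$ to make $t=X_1$ without destroying the shape of $F=x_0^a(x_1^b+\cdots+x_n^b)$; a genuinely general $t=\alpha_1X_1+\cdots+\alpha_nX_n$ with all $\alpha_i\ne0$ is what the argument needs. The paper computes, via Lemma~\ref{colonperp},
\[
F^\perp:I=\bigcap_{i=1}^n(x_0^ax_i^{b-1})^\perp=\bigl(X_0^{a+1},\,X_1^b,\ldots,X_n^b,\,\{X_iX_j\}_{i<j}\bigr),
\]
and the crucial step after adding $(t)$ is that $X_i\cdot t\equiv \alpha_iX_i^2$ modulo the $X_iX_j$, forcing every $X_i^2$ into $\widetilde I$. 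One then reads off the Hilbert function of $T/\widetilde I$ as $1,n,n,\ldots,n,n-1$ (the value $n$ in degrees $1$ through $a$), which sums to $(a+1)n$.
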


\begin {proof}
Consider the ideal $I= (X_1, \ldots, X_n) \subset T$. We first calculate
$F^\perp : I$.

\[ F^\perp : I =(F^\perp :  (X_1, \ldots, X_n) )= (F^\perp :  (X_1)) \cap \cdots \cap (F^\perp :  (X_n)) .\]
Thus, by Lemma \ref {colonperp},
\[ F^\perp:I= (x_0^a x_1^{b-1}) ^\perp \cap \cdots \cap(x_0^a x_n^{b-1}) ^\perp
\]
\[ = (X_0 ^{a+1},X_1^b,X_2,\ldots,X_n)\cap \cdots \cap
(X_0 ^{a+1},X_1,\dots,X_{n-1},X_n^b)
\]
\[ =(X_0 ^{a+1},X_1^b,\ldots,X_n^b, X_1X_2, \ldots, X_{n-1} X_n ).
\]
Now consider $\widetilde I =F^\perp : I + (t)$, where  $t=\alpha_1 X_1+ \ldots+\alpha_n X_n \in I_1$  is a general form.
 We have
\[\widetilde I = F^\perp : I + (\alpha_1 X_1+ \ldots+\alpha_n X_n) \]
\[=(X_0 ^{a+1},X_1^2,\ldots,X_n^2, X_1X_2, \ldots, X_{n-1} X_n , \alpha_1 X_1+ \ldots+\alpha_n X_n) .
\]
We want to apply Corollary \ref {maincor}, so we
 compute  $\sum _{i=0} ^s HF (T/  \widetilde I , i)$ for $s$ large enough.

 For $a+1 = 2$ and $b=2$,  $F=  x_0 (x_1^2 + \ldots + x_n^2)$ and
 \[ \widetilde I=
 (X_0 ^{2},X_1^2,\ldots,X_n^2, X_1X_2,   X_1X_3 , \ldots, X_{n-1} X_n , \alpha_1 X_1+ \ldots+\alpha_n X_n) .
 \]
 So we can easily see that
  \[
 \begin {array}{c|ccccc}
 i                                    &       0 & 1 & 2 &3  \\
 \hline \\
 HF (T/\widetilde I , i)  & 1  & n  & n-1 & 0 \\
 \end {array}
 \]
From this we get $\sum _{i=0} ^s HF (T/\widetilde I  , i)= 2n$.
\vskip 2pt

 For $a+1 > 2$ we have
 \[ \widetilde I=(X_0 ^{a+1},X_1^2,\ldots,X_n^2, X_1X_2, X_1X_3\ldots, X_{n-1} X_n , \alpha_1 X_1+ \ldots+\alpha_n X_n) .
\]
A simple computation shows that
 \[
 \begin {array}{c|cccccc}

i                                    &  0 & 1 & 2 \ldots & a & a+1 &a+2 \\
\hline \\
 HF (T/\widetilde I , i) & 1  & n & n  \ldots & n & n-1 &0 \\
 \end {array}
 \]
From this we get $\sum _{i=0} ^{s} HF (T/\widetilde I  , i)= (a+1)n$.

Hence, we get $\mathrm{rk} (F) \geq  (a+1)n$ in both cases using Corollary \ref {maincor},.

Now consider $F^\perp$. Since
\[F^\perp \supseteq (X_0 ^{a+1}, X_1^b-X_2^b,\ldots, X_1^b-X_n^b, X_1X_2, X_1X_3,  \ldots, X_{n-1} X_n ) ,
\]
then the  ideal
\[
(X_0 ^{a+1}+  (X_1^{a+1-b} +\ldots+X_n^{a+1-b} )
((n-1) X_1^b-\ldots -X_n^b), X_1X_2, X_1X_3, \ldots, X_{n-1} X_n )
\]
is contained in $F^\perp$. This last  is the ideal of $(a+1)n$ distinct points lying on the $n$ lines whose defining ideal is $(X_1X_2, X_1X_3, \ldots, X_{n-1} X_n ) $.

 By the Apolarity Lemma, it follows that $\mathrm{rk}(F) \leq (a+1)n$, and we are done.

\end{proof}

\begin{rem}

 For some special $F$ in Proposition \ref{a+1geqb} the rank of $F$ can be computed by $t$, instead of by $I$ and $t$. For instance, if $F= x(y^2+z^2)$ we have $\rk(F)=4$. Note that in the proof of Proposition \ref {a+1geqb} we showed that the rank was computed by  $I=(Y,Z)$ and $t= \alpha_1 Y+ \alpha_2 Z$. However,  the rank is also computed by $t=X$, in other words:
$$\sum_{i=0}^\infty HF(T/(F^\perp: (X) +(X)),i) =4.$$

We do not know if the rank of $F$ can always be computed by $t$. For instance, if $F= x^2(y^2+z^2+w^2)$ we have $\rk(F)=9$. In the proof of Proposition \ref {a+1geqb} we showed that the rank was computed  by $I=(Y,Z,W)$ and $t= \alpha_1 Y+ \alpha_2 Z+ \alpha _3 W$. Note that
$$\sum_{i=0}^\infty HF(T/(F^\perp: (Y+Z+W) +(Y+Z+W)),i) =3,$$

and that

$$\sum_{i=0}^\infty HF(T/(F^\perp: (X) +(X)),i) =5,$$
that is, neither $t=Y+Z+W$, nor $t=X$ compute the rank.
We do not know if there is a $t$ which computes the rank of this $F.$

\end{rem}

%%%%%%%%%%%%%%%%%%%%%%%%%%%%
%%%%%%%%%%%%%%%%%%%%%%%%%%%%%%

\begin{rem} Let $M_i = x_0^ax_i^b$, so the polynomial $F$ of the previous proposition, becomes
\[
F=x_0^a(x_1^b + \ldots + x_n^b) = M_1+ \ldots +M_n.
\]
In case $a+1 = b$ we have (see \cite{carcatgermonomi} for the rank of the $M_i$)
$$
\rk(F) = (a+1)n < \rk(M_1) + \cdots + \rk(M_n)= (a+2)n.
$$
Thus,   an analogue of Strassen's Conjecture is certainly not true if a form is the sum of forms which have a common factor.  On the other hand,
when   $a+1 >b$, we have
\[ (a+1)n  =\mathrm{rk} (F) \leq \mathrm{rk}(M_1)+ \ldots +\mathrm{rk} (M_n )= (a+1)n.
\]
Thus, in some cases, the rank is additive over summands, even when the summands have a common factor.
\end{rem}

\begin {prop}\label{n=2}

Let $ b \geq 2$, $a\geq 1$, and let
\[F =x_0^a(x_1^b + x_2^b).\]

 $(i)$ If $a+1 \geq b$, then the rank of $F$ is computed by $I=(X_1,X_2)$ and $t$ and
 $ \mathrm{rk} (F) = 2(a+1).$

$(ii)$ If $a+1 \leq b$, then the rank of $F$ is computed by $t=X_0$  and
 $\mathrm{rk} (F) = 2b.$

\end{prop}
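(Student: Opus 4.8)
Part $(i)$ is already essentially Proposition~\ref{a+1geqb} specialized to $n=2$, so the plan is to invoke that result directly: when $a+1\geq b$, setting $I=(X_1,X_2)$ and taking $t=\alpha_1X_1+\alpha_2X_2$ general gives $\rk(F)=(a+1)\cdot 2$, and the proof of Proposition~\ref{a+1geqb} shows the rank is computed by $I$ and $t$. Nothing new is required here beyond citing the $n=2$ case.

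The substance of the proposition is part $(ii)$, the regime $a+1\leq b$, where the rank drops from the ``expected'' count to $2b$. The plan is the usual two-sided estimate via the Apolarity Lemma (Lemma~\ref{apolarityLEMMA}) and Corollary~\ref{maincor}. For the upper bound I would exhibit an explicit apolar set of $2b$ reduced points. Since $F^\perp$ contains $X_0^{a+1}$ and $X_1^b-X_2^b$ (the latter because $X_1^b\circ F$ and $X_2^b\circ F$ both equal a constant times $x_0^a$, so their difference kills $F$), and since $b\geq a+1$, one expects a complete intersection of type $(b,b)$ of the form $\bigl(X_1^b-X_2^b,\ X_0^{a+1}G(X_0,X_1,X_2)\bigr)$ inside $F^\perp$ for a suitable homogeneous $G$ of degree $b-a-1$ chosen so that the degree-$b$ generator actually lies in $F^\perp$ and the resulting scheme is reduced; this is a set of $b^2$... — wait, that overshoots, so instead I would take the complete intersection $\bigl(X_1^b-X_2^b,\ \ell\bigr)$ where $\ell$ is a general linear form, giving $b$ points, but that need not be apolar. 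The cleaner route: take $\bigl(X_1^b - X_2^b,\ Q\bigr)$ with $Q$ of degree $2$ in $X_0$ together with lower-order corrections — concretely, mimic Proposition~\ref{a+1geqb}'s construction but using the second catalecticant direction. Realistically I would instead read off $F^\perp$ completely in three variables (it is Gorenstein, and for $F=x_0^a(x_1^b+x_2^b)$ one can write generators explicitly), then pick the degree-$b$ ``Sylvester-type'' form in the variables $X_1,X_2$ together with one more generator, obtaining a $0$-dimensional complete intersection of length $2b$, verify it is reduced, and conclude $\rk(F)\leq 2b$.

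For the lower bound I would apply Corollary~\ref{maincor} with $e=1$ and $t=X_0$ (i.e.\ $I=(X_0)$), and compute $\sum_{i\geq 0}HF\bigl(T/(F^\perp:(X_0)+(X_0)),i\bigr)$. By Lemma~\ref{colonperp}, $F^\perp:(X_0)=(X_0\circ F)^\perp=\bigl(x_0^{a-1}(x_1^b+x_2^b)\bigr)^\perp$ (and if $a=1$ this is $(x_1^b+x_2^b)^\perp$). Adding back $(X_0)$, the quotient $T/(F^\perp:(X_0)+(X_0))$ becomes $k[X_1,X_2]/J$ where $J=(x_1^b+x_2^b)^\perp$ restricted/computed in two variables — and $(x_1^b+x_2^b)^\perp\subset k[X_1,X_2]$ is the complete intersection $(X_1^b-X_2^b,\,?)$... more precisely the binary form $x_1^b+x_2^b$ has $\perp$-ideal generated in degrees summing to $b+2$, and one checks its Hilbert function has partial sums equal to... here I must be careful: the point is that the partial sums of $HF$ of this artinian quotient add up to exactly $2b$ (this is where the hypothesis $a+1\le b$, equivalently the catalecticant of $F$ in the $X_0$-direction being ``as large as possible'', enters). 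Then Corollary~\ref{maincor} gives $\rk(F)\geq 2b$, and with the upper bound we get equality; moreover since the bound of Corollary~\ref{maincor} is attained with $I=(t)=(X_0)$, the rank is computed by $t=X_0$ in the sense of Definition~\ref{lindef}.

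The main obstacle I anticipate is the explicit determination of $F^\perp$ for $F=x_0^a(x_1^b+x_2^b)$ in three variables and, relatedly, pinning down the complete intersection giving the $2b$ apolar points for the upper bound: one must verify both that the chosen generators genuinely annihilate $F$ (a finite differentiation check) and that the resulting zero-dimensional scheme is reduced of length exactly $2b$ — the reducedness is the delicate point, since the $b$-fold structure coming from $X_1^b-X_2^b$ must be cut transversally by the other generator. The lower-bound computation, by contrast, should reduce to a short and routine Hilbert-function calculation for the apolar ideal of a binary form once $F^\perp:(X_0)$ is identified, so I expect the upper bound to carry the weight of the argument.
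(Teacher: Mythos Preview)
Your plan for part $(i)$ and for the lower bound in $(ii)$ is exactly what the paper does. For the lower bound you are right that, after applying Lemma~\ref{colonperp} and adding $(X_0)$, the quotient $T/(F^\perp:(X_0)+(X_0))$ is isomorphic to $k[X_1,X_2]/(x_1^b+x_2^b)^\perp$; the paper makes this explicit as $(X_0,X_1X_2,X_1^b-X_2^b)$, whose Hilbert function $1,2,\ldots,2,1$ sums to $2b$. So the lower bound goes through with no difficulty.

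The genuine gap is the upper bound, where you circle without landing. Your attempts $(X_1^b-X_2^b,\,X_0^{a+1}G)$ and $(X_1^b-X_2^b,\,\ell)$ fail for the reasons you note, and ``read off $F^\perp$ and pick two generators'' is not yet a proof. The missing observation is that $X_1X_2\in F^\perp$: differentiating $x_1^b+x_2^b$ by $X_1X_2$ kills each summand. This degree-$2$ element is exactly the second generator you need. The paper takes the complete intersection
\[
(X_1X_2,\ X_0^b+X_1^b-X_2^b)\subset F^\perp
\]
(the second generator lies in $F^\perp$ because $X_0^b\circ F=0$ when $b\ge a+1$, and $X_1^b\circ F=X_2^b\circ F=b!\,x_0^a$), which cuts out $2b$ distinct points: on $\{X_1=0\}$ the second form becomes $X_0^b-X_2^b$, and on $\{X_2=0\}$ it becomes $X_0^b+X_1^b$, each with $b$ simple roots. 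So the upper bound is in fact the easy half, not the hard one --- and note that the key element $X_1X_2$ already appears in your own lower-bound computation as a generator of $(x_1^b+x_2^b)^\perp$, so doing that calculation first would have handed you the answer.
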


\begin {proof}
$(i)$ Follows from Proposition \ref{a+1geqb}.

$(ii)$ In this case let $I= (X_0) \subset T $.
Obviously  $t $   is a general form in $I_1$.
Hence we consider the ideal
  $\widetilde I = F^\perp :  (X_0) + (X_0)$, and we have
\[\widetilde I = (X_0 \circ F)^\perp + (X_0)=(x_0 ^{a-1} (x_1^b+x_2^b))^\perp+ (X_0)  \]
\[= (X_0, X_1X_2, X_1^b-X_2^b ). \]

Since
 \[
 \begin {array}{c|cccccc}

i & 0 & 1 & 2& \ldots &b-1& b  \\
\hline \\
 HF (T/\widetilde I  , i)  & 1 & 2 & 2& \ldots & 2& 1 \\
 \end {array}
 \]
we have $\sum _{i=0} ^b HF (T/\widetilde I  , i) =2b$.
Hence from Corollary \ref {maincor},  we get $\mathrm{rk} (F) \geq 2b$.

Since
\[  (X_1X_2,X_0^b + X_1^b - X_2 ^b )
\]
is the ideal of  $2b$ points  apolar to $F$, by the Apolarity Lemma we are done.

\end {proof}

\begin{rem}

Note that for $a+1 \leq b$ and $F =x_0^a(x_1^b + x_2^b)$
\[\mathrm{rk} (F) = 2b <  \mathrm{rk}(x_0 ^a x_1^b )+\mathrm{rk}(x_0 ^a x_2^b )=2b+2.\]

\end{rem}

Now we study the rank of the forms $G=F+x_0^{a+b}$,  where $F$ is as in  Propositions \ref{a+1geqb} and \ref{n=2}, that is,
$$
G = x_0^a(x_1^b + \ldots + x_n^b)+ x_0^{a+b} .
$$
We will show that $F$ and $G$ have the same rank.

\begin {prop} \label{a+1geqb2}

Let $ b \geq2$, $n \geq 2$ and let
\[G = x_0^a(x_1^b + \ldots + x_n^b)+x_0^{a+b}
=x_0^a(x_0^b+x_1^b + \ldots + x_n^b) \in S.\]
If $a+1 \geq b$, then the rank of $G$ is computed by $I=(X_1, \ldots , X_n)$ and $t$ and
 \[ \mathrm{rk} ( G )= (a+1)n.\]

\end{prop}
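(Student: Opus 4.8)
The plan is to mimic the proof of Proposition~\ref{a+1geqb} closely, since $G$ differs from $F=x_0^a(x_1^b+\ldots+x_n^b)$ only by the monomial $x_0^{a+b}$, and to show this extra summand does not change the relevant apolarity data. First I would record that, because $a+1\geq b$, we have $a+b\geq 2b-1\geq b+1> a+1$, so the partial derivative $X_i\circ G$ equals $x_0^a(b\,x_i^{b-1})$ for $i\geq 1$ exactly as for $F$ (the term $x_0^{a+b}$ is killed by $X_i$ since it contains no $x_i$). Hence $G^\perp:(X_i)=(X_i\circ G)^\perp=(x_0^ax_i^{b-1})^\perp$, identical to the computation in Proposition~\ref{a+1geqb}, and therefore, with $I=(X_1,\ldots,X_n)$,
\[
G^\perp:I=(X_0^{a+1},X_1^b,\ldots,X_n^b,X_1X_2,\ldots,X_{n-1}X_n),
\]
the same ideal as $F^\perp:I$. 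Consequently, for a general linear form $t=\alpha_1X_1+\ldots+\alpha_nX_n\in I_1$, the ideal $\widetilde I=G^\perp:I+(t)$ coincides with the one analyzed for $F$, so the Hilbert-function tables reproduced there apply verbatim and give $\sum_{i=0}^\infty HF(T/\widetilde I,i)=(a+1)n$. By Corollary~\ref{maincor}, $\rk(G)\geq (a+1)n$.

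For the upper bound I would exhibit an explicit apolar set of $(a+1)n$ points for $G$, adapting the one used for $F$. There the point ideal was obtained by adding to $(X_1X_2,X_1X_3,\ldots,X_{n-1}X_n)$ a single form of the shape $X_0^{a+1}+(X_1^{a+1-b}+\ldots+X_n^{a+1-b})((n-1)X_1^b-X_2^b-\ldots-X_n^b)$, which lies in $F^\perp$. For $G$ I need a form $h\in T$ of degree $a+1$ with $X_0^{a+1}$ coefficient such that $h\circ G=0$ modulo the relations forced by $(X_1X_2,\ldots)$; the natural candidate is
\[
X_0^{a+1}+c\,(X_1^{a+1-b}+\ldots+X_n^{a+1-b})\bigl((n-1)X_1^b-X_2^b-\ldots-X_n^b\bigr)+c'\,(\text{terms involving }X_0),
\]
and one checks that because $X_0^{a+b}$ contributes $X_0^{a+1}\circ x_0^{a+b}\neq 0$, the constants can be chosen (using $a+1\geq b$, so that $X_0^{a+1-b}$-type corrections are available) so that $h\in G^\perp$ while the cut-out scheme remains $(a+1)n$ reduced points distributed on the $n$ lines $\{X_iX_j=0\}$. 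Then $I_{\X}\subset G^\perp$ with $|\X|=(a+1)n$, and the Apolarity Lemma gives $\rk(G)\leq(a+1)n$.

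The main obstacle is verifying that such an $h$ genuinely exists in $G^\perp$ and that the corresponding zero set is reduced of the correct cardinality: one must confirm that adding the monomial $x_0^{a+b}$ (equivalently, working with $x_0^a(x_0^b+x_1^b+\ldots+x_n^b)$) still admits a length-$(a+1)n$ reduced apolar subscheme supported on the union of the coordinate-type lines, rather than being forced onto a larger configuration. This is essentially a bookkeeping check on binomials in one variable: on each line $\{X_j:j\neq i\}=0$ the ideal $G^\perp$ restricts to a principal ideal in $k[X_0,X_i]$ of degree $a+1$ whose generator — coming from $X_0^{a+b}$ and $x_0^ax_i^b$ — must be shown square-free for general coefficients, so that it defines $a+1$ distinct points, and one must check these local pieces glue to a global radical ideal of the right Hilbert polynomial. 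Granting that, everything else is the same computation already carried out in Proposition~\ref{a+1geqb}, so the two bounds meet and $\rk(G)=(a+1)n=\rk(F)$.
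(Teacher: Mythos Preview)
Your approach matches the paper's: the lower bound is identical because $X_i\circ G=X_i\circ F$ for $i\ge 1$ makes $G^\perp:I$ equal to $F^\perp:I$, and for the upper bound the paper carries out exactly the construction you outline, taking the explicit form $nX_0^{a+1}-\binom{a+b}{b}X_0^{a+1-b}(X_1^b+\ldots+X_n^b)\in G^\perp$ (together with a generic multiple of $X_1^{a+1}+\ldots+X_n^{a+1}$ when $a+1>b$) and verifying reducedness line-by-line via Bertini on $\mathbb{P}^1$. One harmless slip: your chain $a+b\ge 2b-1\ge b+1>a+1$ fails whenever $a+1>b$, but it is irrelevant anyway, since $X_i$ annihilates $x_0^{a+b}$ simply because $x_i$ does not occur.
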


\begin {proof}
As in the proof of  Proposition \ref{a+1geqb}, we consider the ideal $I= (X_1, \ldots, X_n) \subset T$ and the linear general form $t =\alpha_1 X_1+ \ldots+\alpha_n X_n$. Let  $\widetilde I = G^\perp:I+ (t)$. We have
\[ \widetilde I =G^\perp :  (X_1, \ldots, X_n) + (\alpha_1 X_1+ \ldots+\alpha_n X_n)\]
\[= (G^\perp :  (X_1) )\cap \cdots \cap( G^\perp :  (X_n)) + (\alpha_1 X_1+ \ldots+\alpha_n X_n).\]
Hence, by Lemma \ref {colonperp},
\[
\widetilde I = (x_0^a x_1^{b-1}) ^\perp \cap \cdots \cap(x_0^a x_n^{b-1}) ^\perp + (\alpha_1 X_1+ \ldots+\alpha_n X_n).
\]
Note that this is exactly the ideal $\widetilde I$ that we constructed in the proof of Proposition \ref{a+1geqb}, thus we may proceed in the same way and we get
 $\mathrm{rk} (G) \geq  (a+1)n$.

Now consider $G^\perp$. It is easy to show that $G^\perp$ contains the ideal
\[
( nX_0 ^{a+1}- {a+b \choose b}( X_1^b+ \ldots +X_n^b) X_0^{a+1-b} , X_1^{b+1},  \ldots , X_n^{b+1},
\]
\[
 X_1X_2, X_1X_3,\ldots, X_{n-1} X_n).
\]

If $a+1=b$, then the ideal
\[    (nX_0 ^{a+1}- {a+b \choose b }( X_1^{b}+ \ldots +X_n^b) X_0^{a+1-b}, X_1X_2, X_1X_3,\ldots, X_{n-1} X_n ) ,\]
 is contained in $G^\perp$ and defines  $(a+1)n$  points apolar to $G$  lying on the $n$ lines whose defining ideal is $(X_1X_2, X_1X_3, \ldots, X_{n-1} X_n ) $. Hence, we conclude using the Apolarity Lemma.

If $a+1>b$, then consider the   ideal
\[
\mathcal{ A } = (\alpha(nX_0 ^{a+1}-  {a+b \choose b }X_0^{a+1-b}( X_1^{b}+ \ldots +X_n^b)) + \beta X_1^{a+1}+ \ldots +\beta X_n^{a+1},
\]
\[ X_1X_2, X_1X_3,\ldots, X_{n-1} X_n ) ,\]
where $\alpha, \beta \in k$. It is easy to see that $\mathcal A$ is contained in
$  G^\perp$. Moreover, for  generic values of $\alpha$ and $\beta$, $\mathcal A$ is the ideal of $(a+1)n$ distinct points lying on the $n$ lines whose defining ideal is $(X_1X_2, X_1X_3,\ldots, X_{n-1} X_n ) $. In fact, consider the line whose ideal is $ (X_2, \ldots,  \ X_n)$ (and analogously for the other $n-1$ lines). We have
\[\mathcal{A} + (X_2, \ldots,X_n) =
(\alpha (nX_0 ^{a+1}-  {a+b \choose b }X_0^{a+1-b}X_1^{b} )+ \beta  X_1^{a+1}, X_2, \ldots,  X_n ) ,
\]
hence, in order to find the $a+1$ points, we have to solve the equation
\[\alpha (nX_0 ^{a+1}-  {a+b \choose b }X_0^{a+1-b}X_1^{b}) + \beta X_1^{a+1}=0
,\]
or, in other words, we have to consider the linear series  cut out on $\PP^1$ by the linear system
\[
\Sigma =<nX_0 ^{a+1}-  {a+b \choose b }X_0^{a+1-b}X_1^{b} , \  X_1^{a+1}>
,\]
whose general element is reduced by Bertini's Theorem.

Thus, using the Apolarity Lemma, it follows that $\mathrm{rk}(G) \geq (a+1)n$, and we are done.
\end{proof}

\begin{rem}

 The lower bound in \cite {LandsbergTeitler2010}, [Proposition 4.7] can only prove the case a=1 and b=2 of our Proposition 4.9.

\end{rem}

\begin {prop}\label{n=2parte2} Let $ b \geq2$ and
\[G =x_0^a(x_1^b + x_2^b)+x_0^{a+b}=x_0^a(x_0^b+x_1^b + x_2^b)\in S.\]

 $(i)$ If $a+1 \geq b$, then the rank of $G$ is computed by $I=(X_1,X_2)$ and
a general $t\in I_1$,  and
 $ \mathrm{rk} (G) = 2(a+1).$

$(ii)$ If $a+1 \leq b$, then the rank of $G$ is computed by $t=(X_0)$ and
 $\mathrm{rk} ( G) = 2b.$

\end{prop}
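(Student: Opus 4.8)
The plan is to mirror, as closely as possible, the two proofs that have already been carried out for the ``companion'' form $F = x_0^a(x_1^b+x_2^b)$ (Proposition \ref{n=2}) and for the $n$-variable analogue $G = x_0^a(x_0^b+x_1^b+\ldots+x_n^b)$ (Proposition \ref{a+1geqb2}). Part $(i)$ is immediate: when $a+1\ge b$, the case $n=2$ of Proposition \ref{a+1geqb2} already asserts that the rank of $G$ is computed by $I=(X_1,X_2)$ and a general $t\in I_1$ and equals $2(a+1)$, so $(i)$ requires nothing more than citing that proposition. The substance of the statement is part $(ii)$, the regime $a+1\le b$, which is not covered by the earlier results.

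For part $(ii)$ I would argue exactly as in Proposition \ref{n=2}$(ii)$. First set $I=(X_0)$ and take $t=X_0$ (a general, indeed the only relevant, form in $I_1$). By Lemma \ref{colonperp} we have $G^\perp:(X_0) = (X_0\circ G)^\perp$, and since $X_0\circ G$ is a scalar multiple of $x_0^{a-1}(x_0^b+x_1^b+x_2^b)+{}$(lower-order bookkeeping) — more precisely $X_0\circ G = (a+b)\,x_0^{a+b-1} + a\,x_0^{a-1}(x_1^b+x_2^b)$, which after collecting is a nonzero multiple of $x_0^{a-1}\bigl(\tfrac{a+b}{a}x_0^b+x_1^b+x_2^b\bigr)$ — I would compute $\widetilde I := G^\perp:(X_0)+(X_0)$. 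Adding $X_0$ kills the $x_0^{a-1}$ ``ghost'' factor and one should get, just as in the $F$-case, $\widetilde I = (X_0,\,X_1X_2,\,X_1^b-X_2^b)$ (the constant $\tfrac{a+b}{a}$ disappears once we set $X_0=0$). Then the Hilbert function of $T/\widetilde I$ is the same table as in Proposition \ref{n=2}$(ii)$, namely $1,2,2,\ldots,2,1$ in degrees $0,1,\ldots,b-1,b$, so $\sum_{i=0}^b HF(T/\widetilde I,i)=2b$, and Corollary \ref{maincor} gives $\mathrm{rk}(G)\ge 2b$.

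For the matching upper bound I would exhibit an apolar set of $2b$ reduced points. The natural candidate, adapting the one used for $F$, is the ideal $(X_1X_2,\ \lambda X_0^{b}+X_1^b-X_2^b)$ for a suitable nonzero scalar $\lambda$ (chosen so that it actually lies in $G^\perp$; one checks $X_0^b\circ G$, $X_1^b\circ G$, $X_2^b\circ G$ and picks $\lambda$ accordingly — the presence of the extra summand $x_0^{a+b}$ in $G$ only changes which scalar multiple of $X_0^b$ is apolar, not the shape of the ideal). This is a complete intersection of type $(2,b)$, hence defines $2b$ points, and one verifies they are reduced and distinct (the two ``lines'' $X_1=0$ and $X_2=0$ each meet the second hypersurface in $b$ distinct points, by a Bertini-type or direct-discriminant argument as in Proposition \ref{a+1geqb2}). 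By the Apolarity Lemma $\mathrm{rk}(G)\le 2b$, which together with the lower bound proves $(ii)$ and shows the rank is computed by $t=X_0$.

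The only genuinely delicate point is the upper bound in $(ii)$: one must pin down the correct scalar $\lambda$ making $\lambda X_0^b + X_1^b - X_2^b$ apolar to $G$ and then confirm that the resulting complete intersection is radical. I expect this to go through routinely — the computation $\partial_{x_i}^b$ applied to $x_0^a(x_0^b+x_1^b+x_2^b)$ is elementary, and radicality follows because on each component line the defining polynomial restricts to a binary form $\lambda X_0^b \pm X_1^b$ with distinct roots — but it is the one step where the term $x_0^{a+b}$ that distinguishes $G$ from $F$ actually intervenes, so it deserves care rather than a direct appeal to the earlier proposition.
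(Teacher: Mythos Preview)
Your outline for part $(i)$ and for the lower bound in part $(ii)$ follows the paper verbatim: part $(i)$ is the case $n=2$ of Proposition~\ref{a+1geqb2}, and for $(ii)$ the paper also takes $t=X_0$, writes $\widetilde I = G^\perp:(X_0)+(X_0) = (X_0,\,X_1X_2,\,X_1^b-X_2^b)$, and reads off $\sum_i HF(T/\widetilde I,i)=2b$.

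The genuine gap is in your upper bound for $(ii)$. You propose $(X_1X_2,\ \lambda X_0^b + X_1^b - X_2^b)$ and plan to choose $\lambda\neq 0$ so that the second generator lies in $G^\perp$. But this is impossible: since $X_1^b\circ G = b!\,x_0^a = X_2^b\circ G$, the form $X_1^b-X_2^b$ already annihilates $G$, while $X_0^b\circ G=\tfrac{(a+b)!}{a!}\,x_0^a\neq 0$; hence $\lambda X_0^b+X_1^b-X_2^b\in G^\perp$ forces $\lambda=0$. With $\lambda=0$ the ideal $(X_1X_2,\,X_1^b-X_2^b)$ is scheme-theoretically supported only at the single point $[1:0:0]$, so it does not give $2b$ reduced apolar points, and your radicality check (``restricts to $\lambda X_0^b\pm X_1^b$ with distinct roots'') fails as well. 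The presence of the summand $x_0^{a+b}$ does more than adjust a scalar---it changes the \emph{shape} of the degree-$b$ apolar form you need. The paper instead takes
\[
\Bigl(\,X_1X_2,\ \ 2X_0^{b}-\tbinom{a+b}{b}\bigl(X_1^{b}+X_2^{b}\bigr)\,\Bigr)\subset G^\perp,
\]
with a \emph{plus} sign between $X_1^b$ and $X_2^b$; on each component line $X_i=0$ the second generator restricts to $2X_0^b-\tbinom{a+b}{b}X_j^b$, which has $b$ distinct roots, yielding $2b$ distinct apolar points and the bound $\rk(G)\le 2b$.
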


\begin {proof}

$(i)$ This is a particular case of Proposition \ref{a+1geqb2}.

$(ii)$ As in Proposition \ref{n=2}, let $I = (X_0) $ and $t=X_0$.
Consider the ideal $\widetilde I = G^\perp :  (X_0) + (X_0)$. We have

\[\widetilde I = (X_0 \circ G)^\perp + (X_0)=(x_0 ^{a-1} (x_1^b+x_2^b))^\perp+ (X_0)  \]
\[= (X_0, X_1X_2, X_1^b-X_2^b ) ,\]
which is the same ideal we found in the proof of Proposition \ref{n=2}. So $\mathrm{rk} (G) \geq 2b$ follows in the same way.

Now notice that
\[ (2X_0 ^{b}- {a+b \choose b}( X_1^b+X_2^b), X_1X_2 )
\]
is the ideal of $2b$  points which are apolar to $G$. Thus, by the Apolarity Lemma, $\rk(G) \leq 2b$, and we are done.

\end {proof}

\begin {rem}

With a bit more effort one can show the following:

a) In Propositions \ref {a+1geqb}, \ref{n=2} (i), \ref{a+1geqb2} and \ref {n=2parte2} (i)
the forms are $e$-computable if $2e \leq b$. The rank  is computed by $I=(X_1^e,\ldots, X_n^e)$ and a general form $t \in I_e$.

b)  In Propositions  \ref{n=2} (ii) and \ref {n=2parte2} (ii)
the forms are $e$-computable if
$2e \leq a+1$ and the rank of $F$ is computed by $I = (X_0^e)  $ and $t=X_0^e$.

\end {rem}

Now we  study  forms $F \in S=k[x_0 , \ldots  ,x_n]$  for which
\[F^\perp=(q^a,g_1,\ldots,g_n) \subset T\]
is a complete intersection such that
\[a\geq 2  \ \hbox {and }\  ae \leq d_1\leq\ldots \leq d_n,\]
where $ e=\deg q , \  d_1=\deg g_1, \ldots ,  d_n=\deg g_n.$

We need the following lemma:

\begin{lem}\label{cilem}
Let $J =(q^a,g_1,\ldots,g_n)$ be a complete intersection  as above.  Then there exist
$f_1,\ldots , f_n$ such that
$$J = (q^a, f_1,\ldots , f_n),$$
 where $\deg f_i = \deg g_i$ and, for all $j, \ 1 \leq j \leq n$ the ideal $(f_j, f_{j+1}, \ldots , f_n)$ defines a smooth complete intersection in ${\mathbb P}^n$ of codimension $n-j+1$ and having degree $\Pi_{i=j} ^d d_i$.

\end{lem}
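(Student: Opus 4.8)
The plan is to fix the generator $q^a$ and choose the other generators one at a time, going from top degree down (i.e. first $f_n$, then $f_{n-1}$, etc.), so that at each stage the tail $(f_j,\dots,f_n)$ together with the earlier stages still generates the same ideal $J$ and defines a smooth complete intersection off the base locus $V(q)$, and then a final Bertini step to kill singularities on $V(q)$ as well. The mechanism is a prime-avoidance / Bertini argument: at step $j$, having already produced $f_{j+1},\dots,f_n$ with $(f_{j+1},\dots,f_n)$ smooth of the right codimension and degree, I want to replace $g_j$ by a general element
\[
f_j = g_j + \sum_{i>j} h_{ij} g_i + h_j\, q^a
\]
of the same degree $d_j$ (the $h_{ij}, h_j$ homogeneous of the appropriate degrees; note $d_j \ge d_i$ for $i<j$ only fails, but $\deg q^a = ae \le d_1 \le d_j$, so $q^a$ genuinely contributes, and the $g_i$ with $d_i \le d_j$ contribute too — here the hypothesis $ae \le d_1 \le \cdots \le d_n$ is exactly what guarantees the linear system from which $f_j$ is drawn is base-point-free where we need it). Such a replacement manifestly does not change the ideal, since we only add multiples of the remaining generators.

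The key steps, in order, are: (1) Set $f_n$ to be a general element of the degree-$d_n$ piece of $J$; since $J$ is a complete intersection it is unmixed of codimension $n+1$, so a general hypersurface section $V(f_n)$ is smooth of codimension $1$ away from $V(J)$ by Bertini, and one checks $V(f_n)$ is actually smooth everywhere of the asserted degree $d_n$ because the relevant linear system separates points and tangents on all of $\mathbb{P}^n$ (the presence of $q^a$ and enough $g_i$'s makes it very ample-like on the relevant strata). (2) Inductively, given $f_{j+1},\dots,f_n$ with $Z_{j+1} := V(f_{j+1},\dots,f_n)$ smooth irreducible of codimension $n-j$ and degree $\prod_{i=j+1}^n d_i$ — wait, the statement only asserts smooth complete intersection, so I only need smoothness, not irreducibility — choose $f_j$ general of degree $d_j$ in $J$; by Bertini applied to the linear system of degree-$d_j$ elements of $J$ restricted to $Z_{j+1}$, the section $Z_{j+1} \cap V(f_j)$ is smooth of codimension $n-j+1$ and of degree $d_j \cdot \prod_{i=j+1}^n d_i = \prod_{i=j}^n d_i$, provided the system is base-point-free on $Z_{j+1}$; this base-point-freeness is where the degree inequality $ae \le d_1 \le \cdots$ is used, so that $q^a$ and the low-degree $g_i$ can be multiplied up to degree $d_j$. (3) After reaching $j=1$, observe $J = (q^a, f_1,\dots,f_n)$ still, and all the tails $(f_j,\dots,f_n)$ are smooth complete intersections of the correct codimension and degree.

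The main obstacle I expect is controlling the base locus of the linear systems on $Z_{j+1}$, and in particular ensuring that Bertini gives smoothness \emph{everywhere} on $Z_{j+1}\cap V(f_j)$ rather than just away from some bad locus. The subtle point is the behavior along $V(q) \cap Z_{j+1}$: since $q^a$ vanishes to order $a \ge 2$ there, differentials of elements of $J$ behave degenerately on $V(q)$, so one cannot naively say the system separates tangent vectors along $V(q)$. I would handle this either by (a) noting that a general degree-$d_j$ element of $J$ is of the form $h_j q^a + (\text{elements involving } g_i)$ with the $g_i$-part general, and $V(g_1,\dots,g_n)$ is disjoint from... no — more carefully, by working with the ideal $J$ directly: $V(J) = V(q^a) \cap V(g_1,\dots,g_n) = V(q)\cap V(g_1,\dots,g_n)$ has codimension $n+1$, i.e. is empty or finite, and the subscheme we build at each stage has strictly smaller codimension, so a general member avoids $V(J)$ and we only need Bertini off a finite set, which is standard. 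So the real content is just the base-point-freeness bookkeeping driven by $ae \le d_1 \le \cdots \le d_n$, together with a clean application of Bertini's smoothness theorem in characteristic zero on the smooth varieties $Z_{j+1}$; the degree count is then forced by the complete-intersection structure (iterated hyperplane-section / Bézout).
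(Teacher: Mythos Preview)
Your proposal is correct and follows essentially the same approach as the paper: choose $f_n, f_{n-1}, \ldots, f_1$ in turn as general elements of $J_{d_n}, J_{d_{n-1}}, \ldots, J_{d_1}$, and apply Bertini on the successively constructed smooth loci $Z_{j+1}=V(f_{j+1},\ldots,f_n)$, using that the degree hypotheses $ae\le d_1\le\cdots\le d_n$ together with $V(J)=\varnothing$ force base-point-freeness of each $J_{d_j}$ on $Z_{j+1}$. Your extra worry about singularities along $V(q)$ is unnecessary (base-point-freeness alone suffices for Bertini in characteristic zero), and the index in your displayed formula for $f_j$ should be $i<j$ rather than $i>j$, but the paper's own proof is essentially the terse version of what you wrote.
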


\begin{proof}

Consider the linear system of forms of degree $d_n$ in $J$.  This system has no base points and so by Bertini's Theorem, the general element is smooth.  Since the general element is a linear combination of $g_n$ and other forms of degree $d_n$ in $J$, there is no loss of generality in choosing a generator for $J$ of the type $f_n=g_n + \hbox{(other forms of degree } d_n)$.  We call this new generator $f_n$.  Now consider the linear system of codimension two varieties cut out on $V(f_n)$ by all the other hypersurfaces in $J$ of degree $d_{n-1}$.  This linear system is clearly base point free in $V(f_n)$ and so the general element of this system cuts out a smooth variety on $V(f_n)$ of codimension 2 in ${\mathbb P}^n$.  We can then replace $g_{n-1}$ by a general element of this system.  Continuing in this same way we arrive at hypersurfaces $f_1, \ldots , f_n$ where $\deg f_i = \deg g_i$ and $(f_1, \ldots , f_n)$ describes a set of $\Pi_{i=1}^n d_i$ points.

\end{proof}

We now want to study sets of points apolar to $F$, having  some points which lie on the variety defined by $q=0$.  We have the following result.

\begin{thm} \label{teor-ci}
Let $F\in S$ be a homogeneous polynomial. If
\[F^\perp=(q^a,g_1,\ldots,g_n)\]

 is a complete intersection such that
\[ a\geq 2 \ , \ \ e=\deg q>0 \hbox{ and } ae \leq d_1=\deg g_1\leq\ldots \leq d_n=\deg g_n,\]

then  $F$ is $e$-computable, the rank of $F$ is computed by $q$  and we have
\[\rk (F)=\Pi_1^n d_i={(1/e) } \sum_{i=0}^\infty HF(T/(F^\perp:(q) + (q)),i ).\]

\end{thm}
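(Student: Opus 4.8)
The plan is to apply Corollary \ref{maincor} and Proposition \ref{prop1ecomp} with the ideal $I = (q)$, so $e = \deg q$ and $t = q$ (which is automatically ``general'' in $I_e = \langle q \rangle$). The two things to establish are: first, an \emph{upper bound} $\rk(F) \le \Pi_1^n d_i$ coming from the Apolarity Lemma applied to a suitable set of points apolar to $F$; and second, a \emph{lower bound} $\rk(F) \ge (1/e)\sum_{i=0}^\infty HF(T/(F^\perp:(q)+(q)),i)$, which is exactly Corollary \ref{maincor}, together with the identification of the right-hand side with $\Pi_1^n d_i$. Once both inequalities meet, Definition \ref{lindef} gives $e$-computability and that the rank is computed by $q$.

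For the upper bound, I would first invoke Lemma \ref{cilem} to replace the generators $g_1,\dots,g_n$ by $f_1,\dots,f_n$ of the same degrees so that $(f_1,\dots,f_n)$ cuts out a reduced set $\Bbb{Y}$ of $\Pi_1^n d_i$ points in $\PP^n$. Since $F^\perp = (q^a, f_1,\dots,f_n) \supseteq (f_1,\dots,f_n) = I_{\Bbb{Y}}$, the Apolarity Lemma (Lemma \ref{apolarityLEMMA}) yields $\rk(F) \le |\Bbb{Y}| = \Pi_1^n d_i$. (One should check $q^a$ is genuinely needed, i.e. that $q^a \notin (f_1,\dots,f_n)$, which holds because $F^\perp$ is a complete intersection of the stated multidegree; but all that is actually needed for the upper bound is the containment $I_{\Bbb{Y}} \subseteq F^\perp$.)

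For the lower bound, the main computation is to evaluate $F^\perp:(q) + (q)$. Since $F^\perp = (q^a, f_1,\dots,f_n)$ is a complete intersection and $a \ge 2$, I expect $F^\perp:(q) = (q^{a-1}, f_1,\dots,f_n)$ — this is the standard fact that in a complete intersection, colon by one of the generators (or a factor of a power appearing among the generators) just lowers that exponent; it can be verified by a Koszul/linkage argument or directly using that $q$ is a non-zerodivisor modulo the remaining generators after the degree bound $ae \le d_1 \le \cdots \le d_n$ is used to guarantee the $f_i$ form a regular sequence with $q$. Consequently $F^\perp:(q) + (q) = (q, f_1,\dots,f_n)$, and then I must argue this is again a complete intersection in $T$ of multidegree $(e, d_1,\dots,d_n)$ — here the degree hypothesis $ae \le d_i$ (hence $e \le d_i$) and the smoothness/transversality built into Lemma \ref{cilem}, intersected now with the hypersurface $V(q)$, is what ensures $(q, f_1, \dots, f_n)$ still has the expected codimension $n+1$, i.e. is a (finite, Artinian) complete intersection rather than dropping dimension. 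For an Artinian complete intersection of multidegree $(e,d_1,\dots,d_n)$ one has $\sum_{i=0}^\infty HF(T/(q,f_1,\dots,f_n),i) = e\cdot d_1 \cdots d_n$ (the length equals the product of the degrees). Dividing by $e$ gives $\Pi_1^n d_i$, matching the upper bound, so Corollary \ref{maincor} closes the argument and Definition \ref{lindef} gives the conclusion.

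The step I expect to be the main obstacle is the transversality claim that $(q, f_1, \dots, f_n)$ remains a complete intersection — equivalently, that $q$ is a non-zerodivisor on $T/(f_1,\dots,f_n)$, or that the points of $\Bbb{Y}$, while possibly lying partly on $V(q)$, do not force a dimension drop. This is where the hypothesis $ae \le d_1$ is doing real work: it is precisely the numerical condition under which Lemma \ref{cilem}'s inductive Bertini construction can be run ``inside'' $V(q)$ as well, guaranteeing that the general $f_i$ meet $V(q)$ properly. I would handle it by re-running the Bertini argument of Lemma \ref{cilem} one hypersurface at a time starting from $V(q)$ instead of $\PP^n$, noting that at each stage the relevant linear system restricted to the current variety is base-point-free because the degree $d_i \ge e$ is large enough that $I_i$ surjects appropriately. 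Everything else — the colon computation and the length-equals-product-of-degrees formula — is routine commutative algebra of complete intersections.
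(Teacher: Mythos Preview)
Your approach is essentially identical to the paper's: Lemma~\ref{cilem} for the upper bound, then the colon computation $F^\perp:(q)=(q^{a-1},g_1,\dots,g_n)$ and Corollary~\ref{maincor} for the lower bound. So the overall strategy is correct.

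However, the step you flag as the ``main obstacle'' --- that $(q,f_1,\dots,f_n)$ is a complete intersection --- is not an obstacle at all, and the degree hypothesis $ae\le d_1$ is \emph{not} doing work here. It is immediate from the hypothesis: since $q^a,g_1,\dots,g_n$ is a regular sequence in $T$, in particular $q^a$ is a non-zerodivisor on $T/(g_1,\dots,g_n)$, hence so is $q$; equivalently $\sqrt{(q,g_1,\dots,g_n)}=\sqrt{(q^a,g_1,\dots,g_n)}$, so both ideals have height $n+1$ and $(q,g_1,\dots,g_n)$ is again a complete intersection. The paper's proof dispatches this in one line. There is no need to re-run Bertini inside $V(q)$, and no need to invoke the replaced generators $f_i$ for the lower bound --- you can work with the original $g_i$ throughout that half of the argument. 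The degree hypothesis $ae\le d_i$ is used only (implicitly) in Lemma~\ref{cilem} to guarantee base-point-freeness of the linear systems $J_{d_i}$, i.e.\ only for the upper bound.
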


\begin{proof}
Using Lemma \ref{cilem} we know that $\rk (F)\leq \Pi_1^n d_i$.

Since $\{q^a,g_1,\ldots,g_n\}$ are a regular sequence,
$F^\perp : (q) = (q^{a-1},g_1,\ldots,g_n)$. Hence
\[ F^\perp:(q)+(q)=(q,g_1,\ldots,g_n).\]

So by Corollary \ref{maincor}  we have
\[\rk (F) \geq
\left ({1\over e} \right)  \sum_{i=0}^\infty HF(T/ (q,g_1,\ldots,g_n), i )= {\Pi_1^n d_i},\]
and the conclusion follows.

\end{proof}

 We now give an example of a form which is $2$-computable but not $1$-computable.

 \begin{ex} \label{example2}
If $F=$
\[
\begin{array}{c}
x^{11} -22x^{9}y^{2} + 33x^{7}y^{4} -22x^{9}z^{2} + 396x^{7}y^{2}z^{2}
-462x^{5}y^{4}z^{2} + \\ 33x^{7}z^{4} -462x^{5}y^{2}z^{4} + 385x^{3}y^{4}z^{4},
\end{array}
\]

then $F$ is $2$-computable and $\rk(F)=25$.
In fact, using \cite{cocoa}, we get  $$F^\perp = ((X^2 +Y^2+Z^2)^2, Y^5, Z^5),$$ hence
$$
 \rk(F)\geq  (1/2) \sum_{i=0}^\infty HF(T/ (F^\perp  :(X^2 +Y^2+Z^2) +(X^2 +Y^2+Z^2)),i)= 25,
 $$
and the ideal $(Y^5 +Z(X^2 +Y^2+Z^2)^2,  Z^5 + X(X^2 +Y^2+Z^2)^2) \subset F^\perp$ is the ideal of $25$ distinct points.

We will see, in Example \ref{example1}, that this form is not 1-computable.

 \end{ex}

\begin{prop}\label{CI-e-una-variab}

Let $F= x_0^a G \in S$  for some $a$ and some form $G \in k[x_1,...,x_n]$. The following hold:

i) $F^\perp = (X_0^{a+1} , G^\perp)$, where $G^\perp$ is considered in $k[X_1,...,X_n]$.

ii) If $G^\perp$ is a complete intersection and all generators of  $G^\perp$ have degree at least $a+1$, then $F$ is $1$-computable.
\end{prop}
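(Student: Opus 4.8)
The plan is to prove (i) first by a direct computation with the differential action, and then deduce (ii) from Theorem \ref{teor-ci}. For part (i), I would argue that $X_0^{a+1}\in F^\perp$ since $X_0^{a+1}\circ(x_0^a G)=0$ (differentiating $x_0^a$ more than $a$ times kills it), and that every generator $h$ of $G^\perp\subset k[X_1,\dots,X_n]$ annihilates $F$ because $h$ involves none of the variables $X_0$, so $h\circ(x_0^aG)=x_0^a(h\circ G)=0$. This gives the containment $(X_0^{a+1},G^\perp)\subseteq F^\perp$. For the reverse containment, the cleanest route is a Hilbert-function count: write $B=k[X_1,\dots,X_n]$ and observe that $T/(X_0^{a+1},G^\perp)\cong \bigl(k[X_0]/(X_0^{a+1})\bigr)\otimes_k (B/G^\perp)$, so its Hilbert series is $(1+t+\dots+t^a)$ times the Hilbert series of $B/G^\perp$. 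Since $B/G^\perp$ is the Artinian Gorenstein ring attached to $G$ (a form of degree $\deg G$ with socle in that degree), and $F=x_0^aG$ has degree $a+\deg G$, a comparison of the two Hilbert series shows $(X_0^{a+1},G^\perp)$ is already Gorenstein Artinian with socle in degree $a+\deg G$; by Macaulay's theorem it equals $F'^\perp$ for some form $F'$ of that degree, and checking that $F'$ can be taken to be $x_0^aG$ forces equality $(X_0^{a+1},G^\perp)=F^\perp$.

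For part (ii), suppose $G^\perp=(g_1,\dots,g_{n})$ is a complete intersection in $B=k[X_1,\dots,X_n]$ (necessarily $n$ generators, since $B/G^\perp$ is Artinian), with $\deg g_i\ge a+1$ for all $i$. By part (i), $F^\perp=(X_0^{a+1},g_1,\dots,g_n)$, which is a complete intersection in $T$. Now I set $q=X_0$ and $a'=a+1$ in the notation of Theorem \ref{teor-ci}: we have $e=\deg q=1$, the exponent $a'=a+1\ge 2$ (here one uses $a\ge 1$; the case $a=0$ is trivial since then $F=G$ is already in fewer variables and $1$-computability is vacuous or follows from $G$ being a complete intersection with $e=1$), and the hypothesis $\deg g_i\ge a+1=a'e$ is exactly the degree condition $a'e\le d_1\le\dots\le d_n$ of that theorem (after reordering the $g_i$ by degree). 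Theorem \ref{teor-ci} then yields that $F$ is $1$-computable, with the rank computed by $q=X_0$.

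The one genuine subtlety — and the step I expect to require the most care — is the edge case $a=0$ and, relatedly, making sure the exponent hypothesis $a'\ge 2$ in Theorem \ref{teor-ci} is met: that theorem is stated for $q^{a}$ with $a\ge 2$, so applying it with $q=X_0$ needs $a+1\ge 2$, i.e. $a\ge 1$. If $a=0$ then $F=G\in k[x_1,\dots,x_n]$ and the statement should be interpreted via Remark \ref{leastvarREM}; here one can instead apply Theorem \ref{teor-ci} directly to $G$ if $G^\perp$ itself has a generator that is a perfect power, or simply note that a complete intersection form is $1$-computable by the argument already used (take $I=(g_1)$, the lowest-degree generator). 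I would state part (ii) for $a\ge 1$ and dispatch $a=0$ with a one-line remark. Everything else is a bookkeeping matter of matching notation between Proposition \ref{CI-e-una-variab} and Theorem \ref{teor-ci}.
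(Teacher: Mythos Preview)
Your plan for (ii) matches the paper exactly: the paper's entire proof of (ii) is the single sentence ``Obvious from Theorem \ref{teor-ci}.'' Your discussion of the hypothesis $a\ge 1$ (needed so that the exponent in $X_0^{a+1}$ is at least $2$) is more careful than the paper, which leaves this implicit.

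For (i) your approach is correct but genuinely different. The paper proves the nontrivial inclusion $F^\perp\subseteq(X_0^{a+1},G^\perp)$ directly: given $g\in F^\perp$, expand $g=h_0+X_0h_1+\cdots+X_0^ah_a+X_0^{a+1}\tilde g$ with each $h_i\in k[X_1,\dots,X_n]$, apply to $x_0^aG$, and read off $h_i\circ G=0$ from the vanishing of the coefficient of $x_0^{a-i}$; hence $h_i\in G^\perp$ and $g\in(X_0^{a+1},G^\perp)$. This is entirely elementary. Your route---show that $T/(X_0^{a+1},G^\perp)$ is Gorenstein Artinian with socle degree $a+\deg G$ via the tensor decomposition, then use Macaulay duality together with the easy inclusion to force equality with $F^\perp$---also works, but one phrase needs tightening: ``a comparison of the two Hilbert series shows \dots\ Gorenstein'' is not right as stated, since a symmetric Hilbert function does not imply Gorenstein. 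What you actually need (and have available) is that a tensor product of Gorenstein Artinian $k$-algebras is Gorenstein; once that is in hand, the inclusion of one Gorenstein Artinian ideal in another with the same socle degree forces equality. The paper's expansion argument is shorter and self-contained; your argument is more structural and explains \emph{why} the answer has the tensor form.
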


\begin{proof} First of all, let $g \in F^\perp$. We can write $g = h_0 + X_0h_1+\cdots+X_0^ah_a+ X_0^{a+1} \widetilde g$ where $ h_0,..,h_a \in k[X_1,...,X_n]$ and $\widetilde g \in k[X_0,...X_n]$. By assumption,
\begin{align*}
  0 &= g\cdot F \\
      &= (h_0 + X_0h_1+\cdots+X_0^ah_a+ X_0^{a+1}\widetilde g)\cdot x_0^aG(x_1,...,x_n)\\
      &= x_0^a(h_0 \cdot G) + ax_0^{a-1}(h_1\cdot G)+\cdots+ (a!)(h_a\cdot G). \\
\end{align*}
Since $h_0 \cdot G, h_1\cdot G, ...,h_a \cdot G \in \mathbb{C}[x_1,...,x_n]$, we have $h_0 \cdot G= h_1\cdot G= ...=h_a \cdot G=0$ and hence $h_0,...,h_a \in G^\perp $. This proves that $F^\perp = (X_0^{a+1}, G^\perp)$.

ii) Obvious from Theorem \ref{teor-ci}.
\end{proof}

Let $V_n = \prod_{1\leq i < j \leq n} (x_i -x_j) \in k[x_1,...,x_n]$ be the Vandemonde determinant. Since $V_n$ is the fundamental skew-symmetric invariant of the symmetric group, it is known that the perp ideal $V_n^{\perp} = (\sigma_1,\sigma_2,..,\sigma_n) \subset k[X_1,...,X_n]$ where $\sigma_i$ is the $i$-th elementary symmetric polynomial in $X_1,...,X_n$  for $i=1,...,n$ (see \cite {TW} and its bibliography). For  later use, let $\sigma'_i$ be the $i$-th elementary symmetric polynomial on the variables $X_2,...,X_n$ for $i = 1,...,(n-1)$. One can see that
\begin {itemize}
 \item $\sigma_1 = X_1 + \sigma'_1$,
 \item $\sigma_2 = X_1 \sigma'_1 + \sigma'_2$,
 \item $\cdots$
 \item $\sigma_{n-1} = X_1 \sigma'_{n-2} + \sigma'_{n-1}$,
 \item $\sigma_n = X_1\sigma'_{n-1}$.
 \end  {itemize}

\begin{prop}\label{Teitler-Woo} \cite{TW} \ $\rk(V_n) = (n-1)!$
\end{prop}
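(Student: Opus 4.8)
The plan is to apply Theorem \ref{teor-ci} to $F = V_n$. The perp ideal is $V_n^\perp = (\sigma_1, \sigma_2, \ldots, \sigma_n)$ inside $k[X_1, \ldots, X_n]$, a complete intersection of $n$ forms of degrees $1, 2, \ldots, n$. This does not have the shape $(q^a, g_1, \ldots, g_n)$ required by Theorem \ref{teor-ci} (there is no repeated factor), so a direct application to $V_n$ in $n$ variables fails; this is the main obstacle, and the idea to get around it is to add a variable. By Remark \ref{leastvarREM} the rank of $V_n$ is unchanged if we regard $V_n$ as a form in $k[x_0, x_1, \ldots, x_n]$, and then Proposition \ref{CI-e-una-variab}(i) gives $(V_n)^\perp = (X_0, \sigma_1, \ldots, \sigma_n) \subset k[X_0, X_1, \ldots, X_n]$, since $V_n = x_0^0 \cdot V_n$ — but this still has all generators distinct with no power. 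So instead the natural move is to multiply: I would not change $V_n$ but rather recognize that the degrees $1, 2, \ldots, n$ do not fit the hypothesis $ae \le d_1 \le \cdots$ either, so the cleanest route is a direct Hilbert-function computation via Corollary \ref{maincor} combined with an explicit apolar point set.

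So here is the concrete plan. For the upper bound: I produce an explicit set of $(n-1)!$ reduced points apolar to $V_n$. Using the relations listed before the proposition, $\sigma_n = X_1 \sigma'_{n-1}$ and $\sigma_i = X_1 \sigma'_{i-1} + \sigma'_i$, I would look for an ideal of the form $(\sigma'_1, \ldots, \sigma'_{n-1}, g)$ contained in $V_n^\perp$, where $g$ is a suitable form (for instance a combination involving $X_1$ and $\sigma_n = X_1\sigma'_{n-1}$, which lies in the ideal $(\sigma'_1,\ldots,\sigma'_{n-1})$ modulo $V_n^\perp$); note $(\sigma'_1, \ldots, \sigma'_{n-1})$ is the perp of $V_{n-1}$ in the $n-1$ variables $X_2, \ldots, X_n$, a complete intersection of degrees $1, \ldots, n-1$ with $\prod_{i=1}^{n-1} i = (n-1)!$ zeros. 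Cutting this codimension-$(n-1)$ complete intersection (after saturating/adjoining $X_1$ appropriately) down by one more form of the right degree should produce exactly $(n-1)!$ distinct points, and one checks $I_{\mathbb{X}} \subset V_n^\perp$ directly from the displayed relations, so the Apolarity Lemma gives $\mathrm{rk}(V_n) \le (n-1)!$.

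For the lower bound I would use Corollary \ref{maincor} with $e = 1$ and $t = X_1$ (a general linear form works equally by symmetry, but $X_1$ is convenient because of the relations above). Since $\{\sigma_1, \ldots, \sigma_n\}$ is a regular sequence and $X_1$ is a non-zerodivisor modulo a general such ideal, I compute $F^\perp : (X_1) + (X_1)$. From $\sigma_n = X_1 \sigma'_{n-1}$ one gets $\sigma'_{n-1} \in F^\perp : (X_1)$, and then inductively $\sigma'_{n-2}, \ldots, \sigma'_1 \in F^\perp : (X_1)$ using $\sigma_i - X_1\sigma'_{i-1} = \sigma'_i$; hence $F^\perp : (X_1) + (X_1) = (X_1, \sigma'_1, \ldots, \sigma'_{n-1})$, a complete intersection in the variables $X_2, \ldots, X_n$ of degrees $1, 2, \ldots, n-1$, whose quotient ring has length $\sum_{i=0}^\infty HF = \prod_{i=1}^{n-1} i = (n-1)!$. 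Corollary \ref{maincor} then yields $\mathrm{rk}(V_n) \ge (n-1)!$, and combined with the upper bound we conclude $\mathrm{rk}(V_n) = (n-1)!$; moreover this shows the rank is computed by $t = X_1$. The step I expect to be fussiest is verifying that the candidate apolar ideal for the upper bound really cuts out $(n-1)!$ \emph{distinct} (reduced) points — this needs a Bertini-type argument, exactly as in the proofs of Propositions \ref{a+1geqb} and \ref{a+1geqb2}.
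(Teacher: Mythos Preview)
Both halves of your plan have genuine gaps. For the upper bound, the ideal $(\sigma'_1,\ldots,\sigma'_{n-1},g)$ cannot be apolar to $V_n$, because the $\sigma'_i$ are simply not in $V_n^\perp$: already for $n=3$ one checks $\sigma'_2=X_2X_3\notin(\sigma_1,\sigma_2,\sigma_3)$, and in general $\sigma'_i=\sigma_i-X_1\sigma'_{i-1}$ lies in $V_n^\perp$ only if $X_1\sigma'_{i-1}$ does, which fails. The paper's upper bound is much simpler: take $I=(\sigma_1,\ldots,\sigma_{n-1})\subset V_n^\perp$, a complete intersection of degrees $1,2,\ldots,n-1$, and check on the affine chart $X_1=1$ that its zeros are the $(n-1)!$ orderings of the $(n-1)$-th roots of unity (so in particular reduced). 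No Bertini argument is needed.

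For the lower bound your inductive claim that $\sigma'_{n-2},\ldots,\sigma'_1\in F^\perp:(X_1)$ is false: from $X_1\sigma'_{i-1}=\sigma_i-\sigma'_i$ you would need $\sigma'_i\in F^\perp$ (not merely $\sigma'_i\in F^\perp:(X_1)$) to conclude, and we just saw this fails. What is easy is the containment $(X_1,\sigma'_1,\ldots,\sigma'_{n-1})=F^\perp+(X_1)\subseteq F^\perp:(X_1)+(X_1)$, but that inequality goes the wrong way for Corollary~\ref{maincor}: it gives $\sum HF\le (n-1)!$, not $\ge$. The equality $F^\perp:(X_1)+(X_1)=(X_1,\sigma'_1,\ldots,\sigma'_{n-1})$ is in fact true, but establishing the missing inclusion requires the length-comparison argument the paper carries out in the \emph{next} proposition (showing $V_n^\perp:(X_1)=(\sigma_1,\ldots,\sigma_{n-1},\sigma'_{n-1})$ by computing both sides have length $(n-1)(n-1)!$). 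For Proposition~\ref{Teitler-Woo} itself the paper bypasses all of this and invokes the Ranestad--Schreyer bound for $\rk(V_n)\ge (n-1)!$.
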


\begin{proof} We will give an elementary proof,  different from that in
\cite {TW}, which uses  the Apolarity lemma.
We have $\rk(V_n) \geq (n-1)!$ by the Ranested-Schreyer bound (see \cite{RS00}). For the upper bound, take $I = (\sigma_1,...,\sigma_{n-1}) \subset V_n^{\perp} $. By the Apolarity lemma, it remains to show that $I$ is the homogenous ideal of a set of $(n-1)!$ distinct points. To this end, we will show that on the affine piece $X_1 \neq 0$, the zero locus of the ideal $I$ consists of exactly $(n-1)!$ distinct points. This is enough because $I$ is a complete intersection of forms of degrees $1,2,...,(n-1)$. Now letting $X_1=1$, we have
 $$ \{(X_2,...,X_n) | \sigma_1(1,X_2,...,X_n)=\cdots= \sigma_{n-1}(1,X_2,...,X_n)=0 \}$$
$$  =\{(X_2,...,X_n) | 1+\sigma'_1(X_2,...,X_n)=\cdots= \sigma'_{n-2}(X_2,...,X_n)+\sigma'_{n-1}=0 \} $$
 $$=\{(X_2,...,X_n) | \sigma'_1=-1,...,\sigma'_i=(-1)^i,...,\sigma'_{n-1}=(-1)^{n-1} \}$$
 $$=\{(X_2,...,X_n)| X_2,...,X_n \text{ are the distinct } (n-1) \text{ solutions of the equation }$$
 $$ \mathbf{t}^{n-1}+\cdots +\mathbf{t}+1=0 \}.$$
 This proves that the ideal $I$ defines a set of $(n-1)!$ distinct points.
\end{proof}

 \begin{prop}The rank of the Vandemonde determinant $V_n$ is computed by the linear form $X_1$.
 \end{prop}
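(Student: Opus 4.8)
The plan is to show that the single linear form $t = X_1$ computes the rank of $V_n$, in the sense of Definition \ref{lindef} with $I = (X_1)$. Concretely, I want to verify that
\[
\rk(V_n) = (n-1)! = \sum_{i=0}^\infty HF\bigl(T/(V_n^\perp : (X_1) + (X_1)),\, i\bigr),
\]
where $T = k[X_1,\dots,X_n]$. The upper bound $\rk(V_n) \le (n-1)!$ is already established in Proposition \ref{Teitler-Woo}, and the lower bound will follow from Corollary \ref{maincor} once the Hilbert function sum on the right is computed, provided $X_1$ is a nonzero divisor on $T/(V_n^\perp : (X_1))$ — but this is automatic because $V_n^\perp : (X_1)$ is a saturated one-dimensional ideal (it is the ideal of those apolar points not lying on $\{X_1 = 0\}$) and a general, hence here a specific convenient, linear form avoids its associated primes; I will need to check $X_1$ itself works, which it does since none of the $(n-1)!$ points defined by $(\sigma_1,\dots,\sigma_{n-1})$ lie on $X_1 = 0$ (on $X_1 = 0$ the relations force $\sigma'_1 = \dots = \sigma'_{n-1} = 0$, i.e. all $X_i = 0$).

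The key computation is to identify $V_n^\perp : (X_1) + (X_1)$. Using Lemma \ref{colonperp}, $V_n^\perp : (X_1) = (X_1 \circ V_n)^\perp$. Now $X_1 \circ V_n = \partial V_n/\partial x_1$; differentiating the Vandermonde $V_n = \prod_{i<j}(x_i - x_j)$ in $x_1$ gives $V_n \cdot \sum_{j \ne 1} (x_1 - x_j)^{-1}$, which one checks is (up to sign) the Vandermonde-type determinant obtained by replacing the row of first powers — equivalently, it is a polynomial whose apolar ideal is well understood. Rather than wrestle with that derivative directly, the cleaner route is: adding $(X_1)$ to $V_n^\perp : (X_1)$ and using $V_n^\perp = (\sigma_1,\dots,\sigma_n)$ together with the triangular relations $\sigma_i = X_1 \sigma'_{i-1} + \sigma'_i$ listed just before Proposition \ref{Teitler-Woo}. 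Modulo $X_1$ we get $\sigma_i \equiv \sigma'_i$, so $V_n^\perp + (X_1) = (X_1, \sigma'_1, \dots, \sigma'_{n-1})$ in $k[X_1,\dots,X_n]$ (the generator $\sigma_n = X_1\sigma'_{n-1}$ becomes redundant). Since $V_n^\perp \subseteq V_n^\perp : (X_1)$, we have $(X_1, \sigma'_1,\dots,\sigma'_{n-1}) \subseteq V_n^\perp:(X_1) + (X_1)$; I expect the reverse inclusion as well, and in any case $(X_1,\sigma'_1,\dots,\sigma'_{n-1})$ is the ideal of a complete intersection of degrees $1,1,2,\dots,n-1$ in $\PP^{n-1}$, hence of $(n-1)!$ points, so its Hilbert function already sums to $(n-1)!$; by Corollary \ref{maincor} applied with $I = (X_1)$, $t = X_1$ this gives $\rk(V_n) \ge (n-1)!$, matching the upper bound.

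So the argument reduces to two points: (a) confirming $V_n^\perp:(X_1) + (X_1) = (X_1,\sigma'_1,\dots,\sigma'_{n-1})$, or at least that its Hilbert function sums to $(n-1)!$; and (b) confirming the ideal $(X_1, \sigma'_1,\dots,\sigma'_{n-1})$ cuts out $(n-1)!$ distinct points — but this is exactly the affine-chart computation already carried out in the proof of Proposition \ref{Teitler-Woo}, where $\sigma'_1 = \dots = \sigma'_{n-1} = 0$ (after setting the remaining variable appropriately) was seen to correspond to distinct roots. The main obstacle I anticipate is being careful with (a): one must ensure that passing from $V_n^\perp$ to $V_n^\perp : (X_1)$ before adding $(X_1)$ does not enlarge the quotient beyond the complete intersection $(X_1,\sigma'_1,\dots,\sigma'_{n-1})$ — equivalently, that $X_1 \circ V_n$ has exactly $(n-1)!$ in its apolar count after quotienting by $X_1$, i.e. that no "extra" points on $\{X_1 = 0\}$ appear. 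Since $\sum_i HF(T/(V_n^\perp:(X_1)+(X_1)),i)$ equals the number of apolar points to $V_n$ lying off $\{X_1=0\}$, and all $(n-1)!$ of the points from Proposition \ref{Teitler-Woo} lie off that hyperplane while giving a minimal decomposition, the count is forced to be exactly $(n-1)!$ by the sandwiching $\rk(V_n) \ge (\text{this sum}) \ge (\text{sum over }F^\perp) $ combined with $\rk(V_n) = (n-1)!$. I would present it in that order: recall the upper bound, verify $X_1$ is a nonzerodivisor on $T/(V_n^\perp:(X_1))$, compute $V_n^\perp + (X_1)$ via the triangular relations, invoke Corollary \ref{maincor}, and conclude equality.
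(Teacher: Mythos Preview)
Your proposal has a genuine gap at exactly the point you yourself flag as ``(a)''. You correctly compute $V_n^\perp + (X_1) = (X_1,\sigma'_1,\dots,\sigma'_{n-1})$ via the triangular relations, and you note the inclusion $(X_1,\sigma'_1,\dots,\sigma'_{n-1}) \subseteq V_n^\perp:(X_1)+(X_1)$. But this inclusion only yields the \emph{upper} bound
\[
\sum_{i\ge 0} HF\bigl(T/(V_n^\perp:(X_1)+(X_1)),\,i\bigr) \;\le\; \sum_{i\ge 0} HF\bigl(T/(X_1,\sigma'_1,\dots,\sigma'_{n-1}),\,i\bigr) \;=\; (n-1)!,
\]
and Corollary~\ref{maincor} gives the same thing, since $(n-1)! = \rk(V_n) \ge \sum_i HF(T/(V_n^\perp:(X_1)+(X_1)),i)$. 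Your ``sandwiching'' at the end does not produce the missing lower bound: every inequality in Theorem~\ref{firstthm} runs $|\mathbb{X}| \ge |\mathbb{X}'| \ge \sum_i HF(T/(F^\perp:I+(t)),i)$, so all the estimates you have access to point the same way. (There is also a slip: $V_n^\perp:(X_1)$ is not ``a saturated one-dimensional ideal''; it contains the Artinian ideal $V_n^\perp$ and is itself Artinian. You are conflating it with $I_{\mathbb{X}}:(X_1)$.)

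What is actually needed is the reverse inclusion $V_n^\perp:(X_1)+(X_1) \subseteq (X_1,\sigma'_1,\dots,\sigma'_{n-1})$, and this requires computing the colon ideal. The paper does it by a length count: from the exact sequence
\[
0 \longrightarrow T/(V_n^\perp:(X_1)) \longrightarrow T/V_n^\perp \longrightarrow T/(V_n^\perp+(X_1)) \longrightarrow 0
\]
one reads off that $T/(V_n^\perp:(X_1))$ has length $n!-(n-1)! = (n-1)\,(n-1)!$. On the other hand, since $X_1\sigma'_{n-1}=\sigma_n\in V_n^\perp$ we have $(\sigma_1,\dots,\sigma_{n-1},\sigma'_{n-1}) \subseteq V_n^\perp:(X_1)$, and this is a complete intersection of degrees $1,2,\dots,n-1,n-1$, hence of the same length $(n-1)\,(n-1)!$. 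Therefore $V_n^\perp:(X_1) = (\sigma_1,\dots,\sigma_{n-1},\sigma'_{n-1})$, and adding $(X_1)$ gives $V_n^\perp:(X_1)+(X_1) = (X_1,\sigma'_1,\dots,\sigma'_{n-1})$ exactly, with total length $(n-1)!$ as required.
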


\begin {proof} In light of Proposition \ref {Teitler-Woo} it will be enough to show that the length of
$T/(V_n^{\perp}:(X_1)+(X_1)) = (n-1)!$. We first observe that since
$\sigma_1,...,\sigma_{n}$ form a regular sequence and $\sigma_n = X_1\sigma'_{n}$ we have that both $\sigma_1,...,\sigma_{n-1}, X_1 $ and $\sigma_1,...,\sigma_{n-1}, \sigma'_n$ form regular sequences. It is also clear that
$$V_n^{\perp}+(X_1) = (X_1,\sigma_1,...,\sigma_{n-1}, \sigma_n) =(X_1,\sigma'_1,...,\sigma'_{n-1} ). $$
Obviously $X_1,\sigma'_1,...,\sigma'_{n-1}$ is a regular sequence and so
$$ \sum_{i=0}^\infty HF (T/ (V_n^{\perp}+(X_1)),i) = (n-1)! .$$
Thus from the exact sequence
$$ 0 \rightarrow T/(V_n^{\perp}:(X_1)) \rightarrow  T/V_n^{\perp} \rightarrow  T/(V_n^{\perp}+(X_1)) \rightarrow 0$$
we obtain
$$\sum_{i=0}^\infty HF (T/(V_n^{\perp}:(X_1)),i) = n!-(n-1)! = (n-1)!\cdot(n-1).$$
Now notice that
$$V_n^{\perp}:(X_1) \supseteq (\sigma_1,...,\sigma_{n-1}, \sigma'_{n-1}).$$
But the {length}  of $T/(V_n^{\perp}:(X_1)) $ is $ (n-1)(n-1)!$ and this is exactly the length of
$T/ (\sigma_1,...,\sigma_{n-1}, \sigma'_{n-1})$. It follows that
$$V_n^{\perp}:(X_1) = (\sigma_1,...,\sigma_{n-1}, \sigma'_{n-1}).$$
Hence $V_n^{\perp}:(X_1)+(X_1) = (X_1, \sigma_1,...,\sigma_{n-1}, \sigma'_{n-1})$ and this is easily seen to be $V_n^{\perp}+(X_1)$. But we have already shown that
$\sum_{i=0}^\infty HF(T/(V_n^{\perp}+(X_1)),i)= (n-1)! $ and thus we are done.

\end {proof}

Note that the Vandemonde determinant is  1-computable and in $V_n^\perp$ there is a form of degree one.
A  natural question arises:
Does there exist a change of coordinates  such that, after this change, we may consider  $V_n$ in a smaller polynomial ring, in which $V_n$ is still $1$-computable and $(V_n^\perp)_1 =0$?

In  Proposition \ref{Fperp1} we  give a positive answer to this question, but first we observe the following:

\begin {rem}
Recall that $T=k[X_0,...,X_n]$ and suppose that $Y_0,\ldots, Y_n$ is another basis for $T_1$, where
$$ Y_i =\sum_{i=0}^n \alpha _{i,j} X_j.$$
We can write $T$ as a polynomial ring in the new variables $Y_0,...,Y_n$. To avoid confusion we set $\widetilde T = k[Y_0,...,Y_n]$, even though $\widetilde T =T$.
The change of coordinates transformation on $T$ can be considered as
$$\psi : T \rightarrow \widetilde T$$
where
$$X_i  = \psi_i (Y_0,...,Y_n) .$$
It follows that, for a form $G(X_0,...,X_n) \in T$,
$$\psi (G) = G(\psi_0( Y_0, \dots, Y_n),\dots, \psi_n( Y_0, \dots, Y_n))  \in \widetilde T
.$$
Now let $y_0,\ldots, y_n \in S_1$ be the dual basis to $Y_0,\ldots, Y_n$. As with the discussion above we can consider
$$\varphi : S = k[x_0,...,x_n] \rightarrow  \widetilde S = k[y_0,...,y_n] $$
the isomorphism which extends the isomorphism induced by $\psi$ from
$S_1 \rightarrow  \widetilde S_1$.

Since $X_i \circ x_j = \delta_{i,j}$ and $Y_i \circ y_j = \delta_{i,j}$, we have, for $G\in T$ and $F \in S$,

\[ \varphi (G \circ F) = \psi (G) \circ \varphi (F).
\]
\end {rem}

\begin {lem} \label{dualbasis}
Let   $Y_0, \ldots ,Y_n$ be a basis  for $T_1$ and let  $y_0, \ldots ,y_n \in S_1$ be the dual basis.
Let
$\widetilde T = k[Y_0,\ldots, Y_n]$, $\widetilde S = k[y_0,\ldots, y_n]$, and let
 $\psi:  T \rightarrow \widetilde T $ and $\varphi:  S \rightarrow \widetilde S $
be the changes of coordinates.

If  $F(x_0,\dots, x_n)  \in S$ then
\[ \psi( F^\perp ) =  \varphi (F) ^\perp .
\]
\end{lem}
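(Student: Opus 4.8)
The plan is to reduce everything to the identity $\varphi(G\circ F)=\psi(G)\circ\varphi(F)$ recorded in the preceding remark, which says that the change of coordinates $\varphi$ on $S$ intertwines the $T$-action on $S$ with the $\widetilde T$-action on $\widetilde S$ via the ring isomorphism $\psi$. Once that bridge is in place, the statement $\psi(F^\perp)=\varphi(F)^\perp$ is essentially a formal consequence: membership in $F^\perp$ is defined by a differential vanishing condition, and $\varphi$, $\psi$ are isomorphisms, so they transport this condition faithfully.

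Concretely, I would argue by double inclusion. For $\subseteq$, take $G\in F^\perp$, so $G\circ F=0$. Applying $\varphi$ and using the remark, $0=\varphi(G\circ F)=\psi(G)\circ\varphi(F)$, hence $\psi(G)\in\varphi(F)^\perp$; this gives $\psi(F^\perp)\subseteq\varphi(F)^\perp$. For $\supseteq$, note that $\psi$ is a ring isomorphism from $T$ onto $\widetilde T$, so any $H\in\varphi(F)^\perp\subseteq\widetilde T$ can be written $H=\psi(G)$ for a unique $G\in T$; then $\varphi(G\circ F)=\psi(G)\circ\varphi(F)=H\circ\varphi(F)=0$, and since $\varphi$ is injective this forces $G\circ F=0$, i.e. $G\in F^\perp$, so $H=\psi(G)\in\psi(F^\perp)$. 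Combining the two inclusions yields $\psi(F^\perp)=\varphi(F)^\perp$.

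One small point worth addressing explicitly is that $F^\perp$ and $\varphi(F)^\perp$ are ideals (in $T$ and $\widetilde T$ respectively) rather than just linear subspaces, but this is automatic: $F^\perp$ is an ideal by definition (Macaulay's theorem, or directly from the action), $\psi$ is a ring isomorphism so $\psi(F^\perp)$ is an ideal of $\widetilde T$, and $\varphi(F)^\perp$ is an ideal of $\widetilde T$ for the same reason $F^\perp$ is an ideal of $T$. So the set-theoretic equality proved above is automatically an equality of ideals, and no separate verification is needed.

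I do not expect any real obstacle here: the only thing that must be handled with a little care is making sure the intertwining identity from the remark is applied in the correct direction (once for each inclusion) and that one genuinely uses the bijectivity of both $\varphi$ and $\psi$ — injectivity of $\varphi$ for the reverse inclusion, surjectivity of $\psi$ to realize an arbitrary $H\in\varphi(F)^\perp$ as $\psi(G)$. Everything else is bookkeeping, and the proof is short.
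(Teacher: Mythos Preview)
Your proof is correct and follows essentially the same route as the paper's: both argue by double inclusion using the intertwining identity $\varphi(G\circ F)=\psi(G)\circ\varphi(F)$ from the preceding remark, together with the bijectivity of $\psi$ and $\varphi$. The only cosmetic difference is that the paper phrases the forward inclusion in terms of a chosen set of generators of $F^\perp$, whereas you argue element-wise; the reverse inclusion is verbatim the same.
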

\begin {proof}
Let  $F^\perp = (G_1,\ldots, G_s)$, so $\psi( F^\perp ) =  ( \psi (G_1), \dots ,  \psi (G_s) )$. Since
 $ \psi (G_i) \circ \varphi (F) = \varphi (G_i \circ F)=0$,  we get  $\psi (G_i) \in \varphi (F)^\perp$.
For the opposite inclusion,  let $\widetilde G  \in \varphi (F)^\perp$, and
 $G =\psi ^{-1} ( \widetilde G) $.   We have that
$ \psi (G) \circ \varphi (F) = 0$. But $ \psi (G) \circ \varphi (F)= \varphi (G \circ F)$, hence $G \circ F=0$, that is, $G \in F^\perp$, and so
$\widetilde G \in \psi( F^\perp )$.
\end{proof}

\begin{prop} \label{Fperp1}

Let  $F\in S= k[x_0,\dots,x_n]$  and assume that
$$ (F^\perp)_1=(Y_{n-s+1}, \dots, Y_n) \subset T_1,$$ where the $Y_i$ are linearly independent linear forms in the $X_i$.

Let $Y_0, \ldots ,Y_{n-s}, Y_{n-s+1}, \dots, Y_n$ be a basis of $T_1$
and let
$y_0, \ldots ,y_n$ $ \in S_1$ be its dual basis.
There exists  a change of coordinates $\varphi$ such that $\varphi(F) $ involves only the variables $y_0,\dots,y_{n-s}$, and  considering $\varphi(F)$ in $k[y_0,\dots,y_{n-s}]$, we have $(\varphi(F)^\perp)_1=0$.
Moreover, if $F$ is $1$-computable, then $\varphi(F) $ also is $1$-computable.

\end{prop}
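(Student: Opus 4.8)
The plan is to use the change of coordinates machinery set up in the preceding Remark and in Lemma~\ref{dualbasis}, and then to verify that the ``$e$-computability'' data transports under this isomorphism. First I would observe that since $(F^\perp)_1 = (Y_{n-s+1},\dots,Y_n)$, writing $F$ in the dual basis $y_0,\dots,y_n$ means $Y_i \circ \varphi(F) = 0$ for $i = n-s+1,\dots,n$. But $Y_i \circ \varphi(F) = \partial\varphi(F)/\partial y_i$, so $\varphi(F)$ has zero partial derivative with respect to each of $y_{n-s+1},\dots,y_n$; since $\varphi(F)$ is homogeneous of positive degree over a field of characteristic zero, this forces $\varphi(F)$ to be independent of those variables, i.e.\ $\varphi(F) \in k[y_0,\dots,y_{n-s}]$. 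By Lemma~\ref{dualbasis}, $\psi(F^\perp) = \varphi(F)^\perp$ in $\widetilde T = k[Y_0,\dots,Y_n]$; since $\psi$ is an isomorphism preserving degree, $(\varphi(F)^\perp)_1 = \psi((F^\perp)_1) = (Y_{n-s+1},\dots,Y_n)$. Now I would invoke Remark~\ref{leastvarREM}: regarding $\varphi(F)$ as a form in the smaller ring $k[y_0,\dots,y_{n-s}]$ does not change its rank, and the perp ideal computed there is exactly $(\varphi(F)^\perp) \cap k[Y_0,\dots,Y_{n-s}]$, which, having no degree-one part (the degree-one elements of the ambient perp all lie in the span of $Y_{n-s+1},\dots,Y_n$), satisfies $(\varphi(F)^\perp)_1 = 0$ in $k[Y_0,\dots,Y_{n-s}]$.

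For the final assertion, suppose $F$ is $1$-computable, so by Definition~\ref{lindef} there is an ideal $I \subset T$ generated in degree $1$ such that for general $t \in I_1$,
\[ \rk(F) = \sum_{i=0}^\infty HF(T/(F^\perp : I + (t)), i). \]
I would set $\widetilde I = \psi(I) \subset \widetilde T$, which is again generated in degree one, and let $\widetilde t = \psi(t)$, which is general in $\widetilde I_1$ as $t$ ranges over general elements of $I_1$ (since $\psi$ is a linear isomorphism on degree-one parts). Because $\psi$ is a ring isomorphism, it carries colon ideals to colon ideals and sums to sums: $\psi(F^\perp : I + (t)) = \psi(F^\perp) : \psi(I) + (\psi(t)) = \varphi(F)^\perp : \widetilde I + (\widetilde t)$, using Lemma~\ref{dualbasis}. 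An isomorphism of graded rings preserves Hilbert functions degree by degree, hence
\[ \sum_{i=0}^\infty HF\bigl(\widetilde T/(\varphi(F)^\perp : \widetilde I + (\widetilde t)),\, i\bigr) = \sum_{i=0}^\infty HF\bigl(T/(F^\perp : I + (t)),\, i\bigr) = \rk(F) = \rk(\varphi(F)), \]
where the last equality is because $\varphi$ is an isomorphism $S \to \widetilde S$ compatible with the apolarity action, so it preserves Waring rank. This shows $\varphi(F)$ is $1$-computable as a form in $\widetilde T$; and since $\widetilde t$ and the generators of $\widetilde I$ can be taken (by subtracting off their components in $Y_{n-s+1},\dots,Y_n$, which lie in $\varphi(F)^\perp$ and hence do not change the colon ideal) to involve only $Y_0,\dots,Y_{n-s}$, one concludes that $\varphi(F)$ is $1$-computable already in the smaller ring $k[Y_0,\dots,Y_{n-s}]$.

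The only real subtlety, and the step I expect to require the most care, is the last reduction: checking that the witnessing ideal $I$ and form $t$ for $1$-computability can be chosen inside the subring $k[Y_0,\dots,Y_{n-s}]$ rather than merely in the full ring. The point is that modulo $\varphi(F)^\perp$ the variables $Y_{n-s+1},\dots,Y_n$ are ``already zero'' in the relevant quotients (they belong to $\varphi(F)^\perp$), so replacing each generator of $\widetilde I$ by its image under the projection $Y_j \mapsto 0$ for $j > n-s$ changes neither $\varphi(F)^\perp : \widetilde I$ nor $\varphi(F)^\perp : \widetilde I + (\widetilde t)$ — one must verify this colon-ideal invariance carefully, but it follows from $(Y_{n-s+1},\dots,Y_n) \subseteq \varphi(F)^\perp$ together with the fact that $F^\perp : I = F^\perp : (I + (F^\perp)_1)$. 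Everything else is a routine transport of structure along the isomorphisms $\psi$ and $\varphi$, using Lemma~\ref{dualbasis} and Remark~\ref{leastvarREM}, which are already available.
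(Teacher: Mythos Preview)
Your proposal is correct and follows essentially the same route as the paper: transport $F^\perp$ via Lemma~\ref{dualbasis}, read off that $\varphi(F)$ lies in $k[y_0,\dots,y_{n-s}]$ from $Y_i\circ\varphi(F)=0$, and then push the $1$-computability data through the graded isomorphism $\psi$. The paper's own proof is in fact terser than yours and stops after observing that $T/(F^\perp:I+(t))\simeq \widetilde T/(\varphi(F)^\perp:\psi(I)+(\psi(t)))$; it does not spell out the descent of the witnessing pair $(I,t)$ to the subring $k[Y_0,\dots,Y_{n-s}]$, which you correctly flag as the only nontrivial point and handle by projecting generators modulo $(Y_{n-s+1},\dots,Y_n)\subset\varphi(F)^\perp$. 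So your argument is the paper's argument, only with that final reduction made explicit.
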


\begin {proof}

Let $\varphi$ and $\psi$ be as in Lemma \ref {dualbasis}, then
we get
\[ (\psi( F^\perp ) )_1= ( \varphi (F)^\perp)_1 .
\]
Since  $(\psi( F^\perp ) )_1 =(Y_{n-s+1}, \dots, Y_n) \subset \widetilde T_1 $, we have that $Y_i \circ \varphi (F) =0$ for  $n-s+1 \leq i \leq n$. It follows that
$\varphi (F)  \in k[y_0,\dots,y_{n-s}]$.
Now assume that $F$ is $1$-computable, and that the rank of $F$ is computed by $I$ and $t$,
that is, $$\rk (F)= \sum_{i=0}^\infty HF (T/ (F^\perp : I + (t)) .$$
 Since
$\psi (F^\perp : I + (t)) =\psi (F^\perp)  :\psi ( I )+ \psi (t) = \varphi (F)^\perp  :\psi ( I )+ \psi (t)$ and
$T/ (F^\perp : I + (t)) \simeq \widetilde T/ (\psi (F^\perp : I + (t)))=  \rk \varphi (F) $, then
$\varphi (F)$ is $1$-computable, and we are done.

\end{proof}

\begin {rem} \label{remVand}

By a change  of coordinates $\varphi$  as in Proposition  \ref {Fperp1},
we may assume that the form $\varphi (V_n)$, where $V_n$ is the Vandermonde determinant, is 1-computable and $(\varphi (V_n)^\perp)_1=0. $

\end {rem}

We close this section by exhibiting a family of forms which are $e$-computable ($e>1$) but are not  1-computable.

\begin{ex} \label{example1}
Let $T$ be a polynomial ring in three variables. Let $Q\in T$ be
 an irreducible quadratic   form and
let $G_1,G_2\in T$ be two general forms of degree $d$, $d >4 $.
By Macaulay duality, there exists a form $F$ in the dual ring $S$
whose apolar ideal is $$F^\perp= (Q^2,G_1,G_2).$$
By Theorem \ref{teor-ci} we know that $F$ is $2$-computable and $\rk (F)=d^2$.

We claim that $F$ is not 1-computable.

Note that $(G_1,G_2)\subset F^\perp$ is the ideal of a set of $d^2$ distinct points, say $\mathbb{X}$. By Proposition \ref{prop1ecomp}, if $F$ were $1$-computable by $I$ and $t$ ($t$ general in $I$), then
$$ I_\Bbb X + (t) = F^\perp +(t).$$
Thus, we would have then $(G_1,G_2,t) = (Q^2,G_1,G_2,t)$, which is impossible since $t$ does not divide $Q$. Hence $F$ is not $1$-computable.

\end{ex}

%%%%%%%%%%%%%%%%%%%%%%%%%%%%%

%%%%%%%%%%%%%%%%%%%%%%%%%%%%%%%%%%

\begin{ex}  \label{remNonLinComp} In  Section \ref{examples} we  exhibit a form  $F $ whose rank we can compute using ad hoc methods. We show it is not  $1$-computable and wonder if it is  $e$-computable for some $e>1$.
\end{ex}

%%%%%%%%%%%%%%%%%%%%%%%%%%%%%%%%%%%%%%%%%%%%%%%
%%%%%%%%%%%%%%%%%%%%%%%%%%%%%%%%%%%%%%%
%%%%%%%%%%%%%%%%%%%%%%%%%%%%%%%%%%
%%%%%%%%%%%%%%%%%%%%%%%%%%%%%%%%%%

\section{Strassen's conjecture for $\MakeLowercase{e}$-computable forms} \label {mainresultsececomp}

%%%%%%%%%%%%%%%%%%%%%%%%%%%%%%%%%%

Fix the following notation:
\[S=k[x_{1,0} ,\ldots, x_{1,n_1},
\ldots \ldots
, x_{m,0} ,  ,\ldots, x_{m,n_m} ],\]
\[T=k[X_{1,0} ,\ldots, X_{1,n_1},
\ldots \ldots
,X_{m,0}   ,\ldots, X_{m,n_m} ].\]

For $i = 1, \dots , m$, we let
\[ S^{[i]} = k[x_{i,0} ,\ldots, x_{i,n_i}]  ,\]
\[ T^{[i]} =k[X_{i,0} ,\ldots, X_{i,n_i} ],\]
\[F_i \in  S^{[i]}_d, \]
and
\[ F = F_1 + \cdots + F_m \in S_d .\]

If we consider  $F_i \in S$, then we write
\[F_i^\perp=\left\{g\in T \mid g\circ F_i=0\right\}.\]
On the other hand, if we consider $F_i \in S^{[i]}$, then we  also write
\[F_i^\perp=\left\{g\in T^{[i]} \mid g\circ F_i=0\right\}.\]

Given this notation, it is important to know precisely in which ring we are considering $F_i$.

So, for instance, if we consider $F_1 \in S$ then
\[F_1^\perp=\left\{g\in T^{[1]} \mid g \circ F_1=0\right\} \cup
(X_{2,0} ,\ldots, X_{2,n_2},
\ldots \ldots
,X_{m,0}   ,\ldots, X_{m,n_m} ) ,\]
while if we consider $F_1 \in S^{[1]}$ then
\[F_1^\perp=\left\{g\in T^{[1]} \mid g \circ F_1=0\right\}. \]

\begin {rem}\label{rem2}
We assume  that each $F_i$ essentially involves $n_i$  variables, thus  $F_i^\perp$  does not have linear forms involving the variables of $ T^{[i]}$, and in  $F^\perp$  there are no linear forms.
\end {rem}

Moreover, we let $I^{[i]} \subset T^{[i]} $ be  ideals with $t_i \in I^{[i]}$ ($i=1,\cdots,m$) all of the same degree and we set
\[J_i = (F_i^\perp :I^{[i]})+(t_i)  \subset T.
\] where we consider each $F_i$ as a form in $S$.

\begin {lem} \label {lemma1ecomp}
With the notation above  and $a_i \in k$ we have
 \[(F^\perp : (I^{[1]}+ \dots+ I^{[m]}) )+(a_1t_1+ \cdots+  a_mt_m) \subseteq
 J_1 \cap \dots \cap J_m .\]
\end {lem}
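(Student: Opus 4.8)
The plan is to show the reverse-containment statement by a direct element chase, reducing everything to the definition of the apolarity pairing. Recall that $F = F_1 + \cdots + F_m$ with the $F_i$ in disjoint sets of variables, that $I^{[i]} \subset T^{[i]}$ and $t_i \in I^{[i]}$, and that $J_i = (F_i^\perp : I^{[i]}) + (t_i)$ where $F_i$ is viewed inside $S$ (so that, as noted in the discussion before Remark \ref{rem2}, $F_i^\perp$ as an ideal of $T$ contains all the variables $X_{j,\ell}$ for $j \neq i$). I would fix an index $j \in \{1,\dots,m\}$ and prove the containment in $J_j$; since $j$ is arbitrary this yields containment in the intersection.

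\textbf{Key steps.} First I would take a general element $g$ of the left-hand ideal and write it as $g = h + (a_1 t_1 + \cdots + a_m t_m) k$ for suitable $h \in F^\perp : (I^{[1]} + \cdots + I^{[m]})$ and $k \in T$. The summand $(a_1 t_1 + \cdots + a_m t_m)k$ must be split: the term $a_j t_j k$ already lies in $(t_j) \subseteq J_j$, so it suffices to handle the remaining terms $a_i t_i k$ for $i \neq j$, together with $h$. Second, for the terms with $i \neq j$: since $t_i \in I^{[i]} \subseteq T^{[i]}$ and $T^{[i]}$ involves none of the variables of $T^{[j]}$, I would argue that $t_i$ lies in $F_j^\perp$ (because $t_i$ is a form of positive degree in variables on which $F_j$ does not depend, hence $t_i \circ F_j = 0$), and moreover $t_i \in F_j^\perp : I^{[j]}$ trivially since $F_j^\perp : I^{[j]} \supseteq F_j^\perp$. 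Thus $a_i t_i k \in J_j$ for every $i \neq j$. Third, and this is the crux, I must show $h \in J_j$. Since $h \in F^\perp : (I^{[1]} + \cdots + I^{[m]})$, in particular $h \in F^\perp : I^{[j]}$, i.e. $h \cdot u \in F^\perp$ for every $u \in I^{[j]}$. I would then use the disjointness of variables: for $u \in I^{[j]} \subseteq T^{[j]}$, the operator $hu$ annihilates $F = F_1 + \cdots + F_m$, and I claim it must in fact annihilate $F_j$ separately. The reason is that $hu \circ F_i$ for $i \neq j$ and $hu \circ F_j$ live in the (internal direct sum decomposition of) $S$ coming from the partition of variables — more precisely, applying the projection onto the subring $S^{[j]}$ (or setting all other $x$-variables to zero) kills $hu \circ F_i$ for $i \neq j$ and fixes $hu \circ F_j$. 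Hence $hu \circ F_j = 0$, so $h \in F_j^\perp : I^{[j]} \subseteq J_j$.

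\textbf{Main obstacle.} The delicate point is the last step: justifying rigorously that $hu \circ F = 0$ forces $hu \circ F_j = 0$, i.e. that apolarity respects the decomposition into disjoint variable blocks. The clean way is the substitution/grading argument: write $S = \bigotimes_i S^{[i]}$, observe that differentiation by an element of $T^{[j]}$ acts only on the $j$-th tensor factor, so $hu \circ F_i \in S^{[i]}$ for $i\ne j$ lies in a summand that is "orthogonal" to $S^{[j]}$, and conclude that the components $hu \circ F_i$ for distinct $i$ cannot cancel against one another unless each is zero. One must be slightly careful that $h$ itself need not lie in any single $T^{[i]}$, but since only the product $hu$ matters and $u \in T^{[j]}$ carries the relevant differentiations, the multigrading argument still goes through. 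Once this is in hand, combining the three contributions — $a_j t_j k \in (t_j)$, $a_i t_i k \in F_j^\perp$ for $i \neq j$, and $h \in F_j^\perp : I^{[j]}$ — gives $g \in J_j$, and intersecting over $j$ completes the proof.
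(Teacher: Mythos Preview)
Your argument is correct and follows essentially the same route as the paper's proof: first dispose of the $t_i$ summands by noting $t_i \in F_j^\perp$ for $i\neq j$, then show $F^\perp : I^{[j]} \subseteq F_j^\perp : I^{[j]}$.

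One simplification is worth pointing out. The step you flag as the ``main obstacle'' --- showing that $hu \circ F = 0$ forces $hu \circ F_j = 0$ --- does not require any projection or multigrading argument at all. Since $u \in I^{[j]} \subseteq T^{[j]}$ is a form of positive degree in the variables $X_{j,0},\dots,X_{j,n_j}$ only, and $F_i$ for $i\neq j$ does not involve these variables, you have $u \circ F_i = 0$ immediately. Hence $hu \circ F_i = h \circ (u \circ F_i) = 0$ for every $i \neq j$, and $hu \circ F = hu \circ F_j$ follows at once. This is exactly how the paper handles it, and it sidesteps the (genuine) concern that the subrings $S^{[i]}$ do not form an internal direct sum in $S$ because they share the constants.
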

\begin {proof}
Since $F_i \in S^{[i]}$ (although we are considering it in $S$) we always have that $X_{j,0}, \ldots , X_{j,n_j}$ are in $F_i^\perp$ for all $j \neq i$.  Hence $t_j \in F_i^\perp$ for $j\neq i$.  So
 $t_1,\ldots,t_m \in J_1 \cap \dots \cap J_m$ and  it is enough to prove that
\[(F^\perp : (I^{[1]}+ \dots+ I^{[m]}) ) \subseteq
 J_1 \cap \dots \cap J_m,\]
 that is,
 \[(F^\perp : I^{[1]})  \cap \dots \cap (F^\perp :  I^{[m]}) \subseteq
 J_1 \cap \dots \cap J_m.\]
 Let $g \in  F^\perp : I^{[i]}$, ( $1 \leq i \leq m$),
 so $gl \circ F=0$, for any $l \in I^{[i]}$. Since for $j \neq i$,
$l \circ F_j=0$, then
 $gl \circ F_i=0$, that is,  $gl \in F_i^\perp$, by considering $F_i \in S.$ It follows that
 $g \in F_i^\perp :I^{[i]} \subseteq  J_i$,  for $i=1,\ldots,m$, that is,
 $g \in  J_1 \cap \dots \cap J_m $.
\end {proof}

\begin {lem} \label {lemma2ecomp}
Let $J_i$ be as above. If $s \gg 0$, then
 \noindent
(i)
\[\sum_{i=0}^s HF(T/  J_1\cap \ldots \cap J_m ,i)=  \sum_{i=0}^s HF(T/  J_1 ,i)+ \ldots + \sum_{i=0}^s HF(T/  J_m ,i)-m+1
.
\]
(ii)
If $t_i \in I^{[i]}$ is a general form and the rank of $F_i$ is computed by $I^{[i]}$ and $t_i$, then
\[\sum_{i=0}^s HF(T/  J_1\cap \ldots \cap J_m ,i)=
e(\mathrm{rk} (F_1)+ \cdots + \mathrm{rk} (F_m))-m+1
;
\]
\end {lem}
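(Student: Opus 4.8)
The plan is to prove (i) first and then deduce (ii) immediately from (i) together with the hypothesis that each $\rk(F_i)$ is computed by $I^{[i]}$ and $t_i$. For part (i) the key structural fact is that the ideals $J_1, \dots, J_m$ live in essentially disjoint sets of variables: by construction $J_i = (F_i^\perp : I^{[i]}) + (t_i) \subset T$, and since $F_i \in S^{[i]}$ we have $X_{j,0},\dots,X_{j,n_j} \in F_i^\perp \subseteq J_i$ for every $j \neq i$. So $J_i$ contains all the variables of $T^{[j]}$ for $j\ne i$, and $T/J_i$ is, as a graded ring, isomorphic to $T^{[i]}/\bigl((F_i^\perp:I^{[i]})+(t_i)\bigr)$ where now $F_i^\perp$ is taken inside $T^{[i]}$. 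I would first record this reduction, so that the geometry becomes transparent: $V(J_i)$ is a finite set of points lying in the linear subspace $\PP(T^{[i]}_1) \subset \PP(T_1)$, and for $i\ne j$ the subspaces $\PP(T^{[i]}_1)$ and $\PP(T^{[j]}_1)$ are disjoint (they span complementary coordinate subspaces). Hence the schemes $V(J_1),\dots,V(J_m)$ are pairwise disjoint finite sets, and $V(J_1\cap\dots\cap J_m)$ is their disjoint union.

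For the actual Hilbert-function identity in (i), I would argue by induction on $m$ using the exact sequence
\[
0 \to T/(J_1\cap\dots\cap J_m) \to T/(J_1\cap\dots\cap J_{m-1}) \oplus T/J_m \to T/\bigl((J_1\cap\dots\cap J_{m-1})+J_m\bigr)\to 0.
\]
The point is that $(J_1\cap\dots\cap J_{m-1})+J_m = T$ in all large degrees, equivalently the saturation of this sum is $T$: indeed $J_1\cap\dots\cap J_{m-1}$ contains all variables of $T^{[m]}$ (each $J_i$, $i<m$, does, by Remark-type reasoning above), while $J_m$ contains all variables of $T^{[1]},\dots,T^{[m-1]}$; together they contain every variable, so the sum is $\mathfrak{m}$-primary and is irrelevant. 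Therefore $\sum_{i=0}^s HF(T/((J_1\cap\dots\cap J_{m-1})+J_m),i)$ is a constant (independent of $s$ for $s\gg 0$) equal to $1$ — because a finite set of points in disjoint linear subspaces imposes independent conditions only via the single degree-$0$ piece once the two point sets are separated; more precisely $T/((J_1\cap\dots\cap J_{m-1})+J_m)$ has Hilbert function $1,0,0,\dots$ since the ideal is $\mathfrak m$-primary and is generated (up to saturation) by enough linear forms to cut down to the origin. Summing the exact sequence over $i=0,\dots,s$ then gives
\[
\sum_{i=0}^s HF(T/(J_1\cap\dots\cap J_m),i) = \sum_{i=0}^s HF(T/(J_1\cap\dots\cap J_{m-1}),i) + \sum_{i=0}^s HF(T/J_m,i) - 1,
\]
and the induction hypothesis turns the first term on the right into $\sum_{i=0}^s HF(T/J_1,i)+\dots+\sum_{i=0}^s HF(T/J_{m-1},i)-(m-1)+1$; combining yields the claimed formula with the $-m+1$ correction term.

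For part (ii): by hypothesis $t_i$ is a general form of $I^{[i]}_e$ and $\rk(F_i)$ is computed by $I^{[i]}$ and $t_i$, which by Definition \ref{lindef} means precisely $\rk(F_i) = \frac1e \sum_{i'=0}^\infty HF(T^{[i]}/(F_i^\perp:I^{[i]}+(t_i)),i')$; via the variable-reduction isomorphism noted above this equals $\frac1e\sum_{i'=0}^\infty HF(T/J_i,i')$, i.e. $\sum_{i'=0}^s HF(T/J_i,i') = e\cdot\rk(F_i)$ for $s\gg 0$. Substituting into (i) gives $\sum_{i=0}^s HF(T/(J_1\cap\dots\cap J_m),i) = e(\rk(F_1)+\dots+\rk(F_m)) - m + 1$, as desired. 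The main obstacle is the careful bookkeeping of which variables lie in which ideal and verifying that $(J_1\cap\dots\cap J_{m-1})+J_m$ is indeed irrelevant (so that its long-range Hilbert-function sum is exactly $1$, not something larger); this is where the disjointness of the coordinate subspaces $\PP(T^{[i]}_1)$ and the fact that $F_i^\perp$ contains all the "foreign" variables must be used with precision. Everything else is a routine additive Hilbert-function computation via the Mayer–Vietoris-type exact sequence and induction on $m$.
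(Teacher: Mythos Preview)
Your proof is correct and follows essentially the same approach as the paper: induction on $m$ via the Mayer--Vietoris exact sequence, with part (ii) deduced from (i) using the isomorphism $T/J_i \cong T^{[i]}/\bigl((F_i^\perp:I^{[i]})+(t_i)\bigr)$. One small tightening: since $(J_1\cap\dots\cap J_{m-1})+J_m$ contains every variable (as you observe) and is a proper homogeneous ideal, it is exactly the maximal ideal $\mathfrak m$, not merely $\mathfrak m$-primary, which gives the Hilbert function $1,0,0,\dots$ directly---this is precisely what the paper asserts.
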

\begin{proof}
To prove (i) we proceed by induction on $m$. If $m=1$ the equality is obvious. Let $m>1$ and consider
 the following short exact sequence:
 \[
0\longrightarrow T/ (J_1\cap \ldots \cap  J_m)  \longrightarrow
T/J_1\oplus T/ (J_2\cap \ldots \cap  J_m)
\longrightarrow T/(J_1+ (J_2\cap \ldots \cap  J_m) )\longrightarrow 0.
\]
Since $J_1+ J_2\cap \ldots \cap J_m$ is the maximal ideal of $T$ we get the conclusion by the inductive hypothesis.

(ii) follows from (i) since $T/J_i \simeq T^{(i)}/ F_i ^\perp : I^{[i]}+(t_i)$, where  now $F_i$ is considered as a form in $S^{[i]}$
(so $F_i^\perp=\left\{g\in T^{[i]}  \mid g \circ F_i=0\right\}$). Hence, for $s\gg 0$, we have
 \[ e\cdot\mathrm{rk} (F_i) =
  \sum_{j=0}^s HF(T^{[i]}/F_i^\perp: I^{[i]}+(t_i),j) .\]
\end{proof}

\begin {rem} \label {rem3}
Recall that in \cite[Proposition 3.1]{CCC14},  it was shown that Strassen's conjecture holds for foms of the type
$$F(x_0,...,x_n) + y^d,$$
where $F$ is a form of degree $d$. In other terms, adding the power of a new variable increases the rank by exactly one.

Because of this remark, in the following theorem we may assume that the polynomial rings all have at least two variables.

\end {rem}

\begin{thm}\label{mainthm1}
Let $F = F_1+ \cdots + F_m \in S$, where $F_i \in S^{[i]}$ with $n_i \geq 1$. If all the forms $F_i$ are $e$-computable and $(F_i^\perp)_e=0$ then
\[ \mathrm{rk}(F) =  \mathrm{rk} (F_1 )+ \cdots + \mathrm{rk} (F_m ),\]
that is the Strassen Conjecture is true for $F$.
\end{thm}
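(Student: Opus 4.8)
The plan is to combine the apolarity upper bound with the $e$-computability lower bound, using Lemmas~\ref{lemma1ecomp} and~\ref{lemma2ecomp} to estimate the relevant Hilbert functions. First I would fix, for each $i$, an ideal $I^{[i]}\subset T^{[i]}$ and a general form $t_i\in I^{[i]}_e$ computing $\rk(F_i)$, and set $I=I^{[1]}+\dots+I^{[m]}\subset T$ and $t=a_1t_1+\dots+a_mt_m$ for general scalars $a_i$. For the upper bound, one simply takes minimal decompositions $\mathbb X_i\subset\PP(T^{[i]}_1)$ of each $F_i$; since the $F_i$ live in disjoint sets of variables, the union (viewed appropriately in $\PP(T_1)$) is apolar to $F$, so by the Apolarity Lemma $\rk(F)\le\sum_i\rk(F_i)$. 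This direction is routine and requires no hypothesis on $(F_i^\perp)_e$.

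For the lower bound I would apply Corollary~\ref{maincor} (with the ideal $I$ and the general form $t\in I_e$) to get
\[
\rk(F)\ \ge\ \frac1e\sum_{i=0}^\infty HF\bigl(T/(F^\perp:I+(t)),i\bigr).
\]
By Lemma~\ref{lemma1ecomp} we have $(F^\perp:I)+(t)\subseteq J_1\cap\dots\cap J_m$, hence $HF(T/(F^\perp:I+(t)),i)\ge HF(T/(J_1\cap\dots\cap J_m),i)$ for all $i$, and therefore
\[
\rk(F)\ \ge\ \frac1e\sum_{i=0}^\infty HF\bigl(T/(J_1\cap\dots\cap J_m),i\bigr).
\]
Now Lemma~\ref{lemma2ecomp}(ii) evaluates the right-hand side exactly: since each $t_i$ is general in $I^{[i]}_e$ and $\rk(F_i)$ is computed by $I^{[i]}$ and $t_i$, the sum equals $e(\rk(F_1)+\dots+\rk(F_m))-m+1$. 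Dividing by $e$ gives $\rk(F)\ge \rk(F_1)+\dots+\rk(F_m)-\frac{m-1}{e}$, and combined with the upper bound and the fact that $\rk(F)$ is an integer, this pins down $\rk(F)=\rk(F_1)+\dots+\rk(F_m)$ as soon as $\frac{m-1}{e}<1$.

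The main obstacle is precisely this last gap: $\frac{m-1}{e}<1$ only holds automatically when $m\le e$, so a naive count is not enough in general. This is where the hypothesis $(F_i^\perp)_e=0$ (equivalently, Remark~\ref{rem2}: no linear forms when $e=1$) must be used, to show that the containment of Lemma~\ref{lemma1ecomp} is in fact an equality in high degrees, or that $t$ is a nonzerodivisor on $T/(F^\perp:I)$ so that Theorem~\ref{firstthm} applies with no loss. Concretely, I expect one needs: (a) $t=a_1t_1+\dots+a_mt_m$ is a nonzerodivisor on $T/(I_{\mathbb X}:I)$ where $\mathbb X$ minimally decomposes $F$ — here generality of the $a_i$ and the vanishing $(F_i^\perp)_e=0$ prevent any component of $\mathbb X$ from being annihilated, so that the $|\mathbb X|$ count in Theorem~\ref{firstthm} is exact; and (b) an argument that $(F^\perp:I)+(t)$ and $J_1\cap\dots\cap J_m$ agree in all large degrees, so the two Hilbert-function sums coincide rather than merely satisfying an inequality. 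Granting (a)–(b), the chain of (in)equalities above becomes a chain of equalities forcing $\rk(F)=\sum_i\rk(F_i)$ exactly, with no integrality slack needed. I would organize the write-up so that the equality $(F^\perp:I)+(t)=J_1\cap\dots\cap J_m$ in high degrees is the one technical point proved in detail, everything else following formally from the lemmas already in hand.
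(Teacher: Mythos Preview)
Your outline is correct up to the point where you identify the $(m-1)/e$ deficit, but your proposed remedies (a) and (b) miss the actual mechanism, and neither would close the gap as stated. Point (a) is needed but is trivial: $t=a_1t_1+\dots+a_mt_m$ is a nonzerodivisor on $T/(I_{\X}:I)$ simply because the $a_i$ are general and $I_{\X}:I$ defines a finite set of points; the hypothesis $(F_i^\perp)_e=0$ plays no role here. Point (b) is not what the paper does, and there is no reason to expect $(F^\perp:I)+(t)$ and $J_1\cap\dots\cap J_m$ to coincide in large degrees; in fact the paper never compares $F^\perp:I$ directly, but works instead with $I_{\X'}$ for $\X'=\X\setminus V(I)$ where $\X$ minimally decomposes $F$.

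The missing idea is a linear-independence count concentrated entirely in degree $e$. Observe that each $t_j$ lies in \emph{every} $J_i$ (for $i\ne j$ because $t_j\in F_i^\perp$, and for $i=j$ by definition), so all of $t_1,\dots,t_m$ sit in $J_1\cap\dots\cap J_m$, whereas only the single combination $t$ is added to $I_{\X'}$. The hypothesis $(F_i^\perp)_e=0$ is used, via Proposition~\ref{prop1ecomp}, to prove that $(I_{\X'})_e$ contains no nonzero ``uniform'' form $h=h_1+\dots+h_m$ with $h_i\in T^{[i]}_e$: one shows each $h_i\in F_i^\perp:I^{[i]}+(t_i)=I_{\X_i}+(t_i)$, and since $(I_{\X_i})_e\subset(F_i^\perp)_e=0$ this forces $h_i=\mu_i t_i$; then $h\in I_{\X}\subset F^\perp$ together with $(F_i^\perp)_e=0$ forces all $\mu_i=0$. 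From this one deduces that a basis $B$ of $(I_{\X'})_e$ together with $t_1,\dots,t_m$ is linearly independent in $(J_1\cap\dots\cap J_m)_e$, while $(I_{\X'}+(t))_e$ has basis $B\cup\{t\}$. Hence
\[
HF\bigl(T/(I_{\X'}+(t)),e\bigr)-HF\bigl(T/(J_1\cap\dots\cap J_m),e\bigr)\ \ge\ m-1,
\]
and this single degree-$e$ surplus exactly cancels the $-(m-1)$ in Lemma~\ref{lemma2ecomp}(ii), yielding $\rk(F)\ge\sum_i\rk(F_i)$ with no integrality argument needed. You should replace your speculative steps (a)--(b) with this concrete degree-$e$ comparison.
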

\begin {proof}
Let $I^{[i]} \subset T^{[i]}$ and $t_i$  ($\deg t_i=e$) compute the rank of $F_i$ and let $V_i$ be the zero set of $I^{[i]}$.
It is enough to prove that
\[ \mathrm{rk}(F) \geq \mathrm{rk} (F_1 )+ \cdots + \mathrm{rk} (F_m ),\]
since the opposite inequality is obvious.

If $\X$ minimally decomposes $F$, then the ideal $I_\X : (I^{[1]} + \cdots +I^{[m]}) $ is the homogeneous ideal of the subset $\X ' $ of $\X$ not lying on $V_1 \cap \dots \cap V_m$.

For a general choice of $a_i\in k$, the form $ a_1t_1+ \cdots +a_mt_m$  is a non zero divisor for $I_{\X '}$.
Now consider $  I_{\X '} + (a_1t_1+ \cdots +a_mt_m) $.
We have
\[ I_{\X '} + (a_1t_1+ \cdots +a_mt_m)  \]
\[= (I_\X : (I^{[1]} + \cdots +I^{[m]})) +(a_1t_1+ \cdots +a_mt_m)
\]

\[\subseteq
(F^\perp : (I^{[1]}+ \dots+ I^{[m]}) )+(a_1t_1+ \cdots +a_mt_m).
\]
Hence, by Lemma \ref {lemma1ecomp},
\[  I_{\X '} + (a_1t_1+ \cdots +a_mt_m)   \subseteq  J_1 \cap \dots \cap J_m,
  \]
  where $J_i = (F_i^\perp :I^{[i]})+(t_i) \subset T $, considering $F_i\in S$.

We say that a degree $e$ form $h \in I_{\X '}$ is    {\it uniform}  if
$$h = h_1 + \ldots+  h_m ,$$
and   $h_i $ ($i = 1, \ldots, m$) is a degree $e$ form in $T^{[i]}$.

{\em Claim 1: If $h\in (I_{\X '})_e$ is uniform, then $h$=0.
%There are no uniform degree $e$ forms $h\neq 0$ in $I_{\X '}$.
}

Assume that $h\in (I_{\X '})_e$ is uniform.

  Since  $ I_{\X '} = I_{\X} : (I^{[1]}+ \cdots +I^{[m]})$, and $I_{\X} \subset F^\perp$, then $hl_i \in F^\perp$, for any $l^{[i]} \in I_i$.
Hence, for every $i=1,...,m$,
  $$hl_i \in F^\perp \Rightarrow hl_i \circ F =0 \Rightarrow hl_i \circ F_i =0 \Rightarrow h_il_i \circ F_i =0  .$$
 Now, considering $F_i \in S^{[i]}$, the last equality implies
 $h_i \in  F_i ^\perp :l_i$, so that $h_i \in (F_i ^\perp :I^{[i]} )$ and $h_i \in (F_i ^\perp :I^{[i]} )+(t_i) \subset T^{[i]}.$

  Hence, by Proposition \ref{prop1ecomp}, $h_i \in I_{  \X_i } +(t_i)$, where $\X_i$ minimally decomposes  $ F_i$. By hypothesis $(F_i^\perp)_e=0$, hence there are no degree $e$ forms in $I_{  \X_i }$. Thus we have
   $h_i = \mu_i t_i $, and
  $$h= \mu_1 t_1+ \ldots+   \mu_m t_m .$$

  Recall that $h \in I_{\X '}$ and hence it vanishes on all the points of $\X '$, that is the points of $\X$ not lying on $V_1 \cap \dots \cap V_m$.   Since $t_i \in I^{[i]}$, we have that $h$ vanishes also on
  $V_1 \cap \dots \cap V_m   $. It follows that $h \in I_\X \subset F^\perp$.
    Thus $ h \circ F=0$. Now
  \[ h \circ F= h \circ (F_1 + \cdots +F_m) =
\]
\[
 ( \mu_1 t_1+ \ldots+   \mu_m t_m) \circ (F_1 + \cdots +F_m)=
 \mu_1 t_1 \circ F_1 + \ldots+   \mu_m t_m \circ F_m.
 \]
Since $n_i \geq 1$ for all $i=1,...,m$, the hypothesis $(F_i)^\perp _e = 0 $ implies that $\deg F_i >e$, and hence $\deg t_i \circ F_i >0$.
 It follows that $\mu_i t_i \circ F_i =0$ for all $i=1,...,m$, that is $\mu_i t_i \in F_i^\perp$ (considering $F_i \in S^{[i]}$).
Since $(F_i)^\perp_e=0$, we get that $\mu_i=0$ for every $i$, and hence $h=0$.
This completes the proof of  Claim 1.

{\em Claim 2:  If $B$  be is basis of $(I_{\X '})_e $, then
$ B \cup \{t_1,...,t_m\} $ is a set of linearly independent forms.}

For $e=1$ Claim 2 follows immediately from Claim 1, so assume $e>1$.

Let
$$B=\{\alpha_1+\widetilde \alpha_1, ...,\alpha_l+ \widetilde \alpha_l \},$$
where the $\alpha_i$ are uniform and the $\widetilde \alpha_i$ are not uniform.
Now if $t_1$ (and analogously for $t_2,...,t_m$) satisfies:
$$t_1= \mu_1 (\alpha_1+\widetilde \alpha_1)+ \cdots +\mu_l(\alpha_l+\widetilde \alpha_l)+\nu_1 t_2 + \cdots + \nu_m t_m,$$
we get $\mu_1 \widetilde \alpha_1+ \cdots +\mu_l\widetilde \alpha_l=0$.
Hence
$$\mu_1 (\alpha_1+\widetilde \alpha_1)+ \cdots +\mu_l(\alpha_l+\widetilde \alpha_l) =
\mu_1 \alpha_1+ \cdots +\mu_l\alpha_l \in (I_{\X '})_e.
$$
Claim 1 yields
$\mu_1 \alpha_1+ \cdots +\mu_l\alpha_l =0$.
It follows that $t_1$ is a linear combination of $t_2,...,t_m$, thus a contradiction. This finishes the proof of  Claim 2.

Hence by Lemma \ref{lemma1ecomp} we have
$$I_{\X '} + (a_1t_1+ \cdots +a_mt_m) \subseteq J_1 \cap ... \cap J_m.
$$
 Since $B \cup \{a_1t_1+ \cdots +a_mt_m\}$ is a basis of $(I_{\X '} + (a_1t_1+ \cdots +a_mt_m) )_e$ and, by Claim 2,
 $B \cup \{t_1,...,t_m\} \subseteq J_1 \cap \dots \cap J_m$  is a set of linearly independent forms, then
 we have
 \[HF (T/ I_{\X '}  + (a_1t_1+ \cdots +a_mt_m),  e)-
 HF (T/ J_1 \cap \dots \cap J_m,  e) \geq m-1. \]

Since   $ a_1t_1+ \cdots +a_mt_m$  is a non zero divisor for $I_{\X '}$,
for $s \gg 0$ we have
\[ \mathrm{rk}( F )\geq |\X| \geq |\X '| = HF (T/  I_{\X '} ,s) =
\left({1\over e} \right) \sum _{i=0}^s HF (T/ I_{\X '}  + (a_1t_1+ \cdots +a_mt_m), i )
\]
\[\geq \left({1\over e} \right)  \left ( \sum _{i=0}^{e-1} HF (T/ J_1 \cap \dots \cap J_m,  i)+(HF (T/ J_1 \cap \dots \cap J_m,  e) +m-1)+ \right.\]
\[\left. + \sum _{i=e+1}^s HF (T/ J_1 \cap \dots \cap J_m ,i)\right ).
\]
Hence, for $s \gg 0$,  by Lemma \ref{lemma2ecomp}, we get
\[\mathrm{rk}( F )\geq     \left({1\over e} \right)  \left ( \sum_{i=0}^s HF(T/  J_1\cap \ldots \cap J_m ,i)+m-1\right )=
\mathrm{rk} (F_1)+ \cdots + \mathrm{rk} (F_m).
\]
  \end {proof}

\section{Forms for which the Strassen Conjecture holds} \label {strassen}

\begin {thm}\label{coroll}
Let $F = F_1+ \cdots + F_m \in S_d$, where $F_i \in S^{[i]}_d$.    If, for $i=1,\ldots,m$, $F_i$ is of one of the following types:
\begin {itemize}
\item $F_i$ is a monomial;
\item $F_i$ is a form in one or two variables;
\item $F_i = x_0^a(x_1^b+ \cdots +x_n^b)$ with $a+1 \geq b$;
\item $F_i = x_0^a(x_1^b+ x_2^b)$;
\item $F_i = x_0^a(x_0^b+x_1^a+ \cdots +x_n^b)$ with $a+1 \geq b$;
\item $F_i = x_0^a(x_0^b+x_1^b+ x_2^b)$;
\item $F_i=x_0^aG(x_1,\ldots,x_n)$ where $G^\perp=(g_1,\ldots,g_n)$ is a complete intersection and $a<\deg(g_i)$ for $i=1,\ldots,n$;
\item $F_i$ is a Vandermonde determinant;
\end {itemize}

then the Strassen Conjecture holds for $F$.

\end {thm}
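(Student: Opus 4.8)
The plan is to reduce everything to Theorem~\ref{mainthm1}, which states that if each $F_i$ is $e$-computable (for a common $e$) and $(F_i^\perp)_e = 0$, then the Strassen Conjecture holds for $F = F_1 + \cdots + F_m$. So for each family in the list, I need to exhibit a value of $e$ for which $F_i$ is $e$-computable and $(F_i^\perp)_e = 0$, and then arrange that a single common $e$ works across all summands. The first observation is that if $F_i$ is $1$-computable and $(F_i^\perp)_1 = 0$, then (by taking $e$-th powers of the generators of the computing ideal, or by the remark following Proposition~\ref{n=2parte2}) one expects $F_i$ to remain $e$-computable for the relevant small $e$; more importantly, most of the families listed are already shown in Section~\ref{ecomputabilita} to be $e$-computable, and Remark~\ref{rem2} guarantees $(F_i^\perp)_1 = 0$ since each $F_i$ essentially involves all its variables. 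The genuinely delicate point is matching the degree $e$: different families are $e$-computable for different ranges of $e$, so I must check that the intersection of the admissible ranges is nonempty — and here the safe choice is $e = 1$ whenever every summand is $1$-computable.

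First I would dispose of the cases that are literally $1$-computable with $(F_i^\perp)_1 = 0$: monomials (Proposition~\ref{monomi} with $e=1$, and $(F_i^\perp)_1=0$ by Remark~\ref{rem2}); the forms $x_0^a(x_1^b+\cdots+x_n^b)$ and $x_0^a(x_0^b+x_1^b+\cdots+x_n^b)$ with $a+1\geq b$ (Propositions~\ref{a+1geqb} and \ref{a+1geqb2}); the two-variable forms $x_0^a(x_1^b+x_2^b)$ and $x_0^a(x_0^b+x_1^b+x_2^b)$ in the subcase $a+1\geq b$ (Propositions~\ref{n=2}(i), \ref{n=2parte2}(i)); the forms $x_0^a G(x_1,\ldots,x_n)$ with $G^\perp$ a complete intersection all of whose generators have degree $\geq a+1$ (Proposition~\ref{CI-e-una-variab}(ii)); and the Vandermonde determinant after the change of coordinates of Remark~\ref{remVand}, which makes it $1$-computable with $(V_n^\perp)_1 = 0$. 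For all of these, apply Theorem~\ref{mainthm1} with $e=1$ once one checks that $(F_i^\perp)_1 = 0$, which follows from the essential-variables assumption built into the hypothesis $F_i \in S^{[i]}_d$ together with Remark~\ref{rem2} (and Remark~\ref{rem3} lets us assume $n_i \geq 1$ is actually $n_i \geq 1$, with the $n_i = 0$ pure-power case already handled in \cite{CCC14}).

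The remaining cases are the ones where $1$-computability can fail: forms in one variable, and the forms $x_0^a(x_1^b+x_2^b)$, $x_0^a(x_0^b+x_1^b+x_2^b)$ in the subcase $a+1 < b$, which Propositions~\ref{2variab}, \ref{n=2}(ii) and \ref{n=2parte2}(ii) show are only $e$-computable for $e \geq 2$ (e.g. $e = \deg t$ coming from a repeated factor, or $e$ with $2e \leq a+1$). The issue is that Theorem~\ref{mainthm1} needs a \emph{single} $e$ for \emph{all} summands simultaneously. The clean way around this is Remark~\ref{leastvarREM} together with the flexibility in the computing degree: if some $F_i$ is $1$-computable, then — as pointed out in the remark after Proposition~\ref{n=2parte2} and implicit in Proposition~\ref{2variab}(ii) — the monomial/complete-intersection families are in fact $e$-computable for a range of $e$ (with $(F_i^\perp)_e = 0$ still holding because $\deg F_i$ is large relative to $e$), so one raises the computing degree of the ``easy'' summands to match the forced $e$ of the ``hard'' ones. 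Concretely, one picks $e = \max_i e_i$ where $e_i$ is the minimal admissible degree for $F_i$, and verifies family by family that each $F_i$ is still $e$-computable with $(F_i^\perp)_e = 0$; the binary and two-variable-with-$a+1<b$ families contribute the large $e_i$, and the monomial/CI/Vandermonde families absorb any $e$ in their admissible ranges.

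I expect the main obstacle to be precisely this degree-matching bookkeeping: confirming that, for an arbitrary mixture of summand types, there is one value of $e$ for which \emph{every} summand is simultaneously $e$-computable and satisfies $(F_i^\perp)_e = 0$. For a pure list of $1$-computable families this is trivial ($e=1$), but when a one-variable or binary summand forces $e \geq 2$, one must either reprove $e$-computability of the other summands for that $e$ (extending Propositions~\ref{2variab}, \ref{monomi}, and \ref{CI-e-una-variab} to a band of degrees, as flagged in Remark~\ref{ecompxmonomi-formebin} and the remark after Proposition~\ref{n=2parte2}) or argue that, by grouping summands of the same type and using additivity transitively, it suffices to treat each adjacent pair. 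Either route is routine given the machinery already developed in Sections~\ref{ecomputabilita} and \ref{mainresultsececomp}; the condition $(F_i^\perp)_e = 0$ is never an obstruction because in every listed family the degree of $F_i$ strictly exceeds the computing degree $e$, so $F_i^\perp$ has no generators — and in particular no elements — in degree $e$.
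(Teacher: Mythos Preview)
Your overall plan—reduce to Theorem~\ref{mainthm1}—is exactly what the paper does, but you have misread Propositions~\ref{2variab}, \ref{n=2}(ii), and \ref{n=2parte2}(ii) and thereby manufactured an obstacle that is not there. In Propositions~\ref{n=2}(ii) and \ref{n=2parte2}(ii) the rank is computed by $t=X_0$, which has degree~$1$; these forms are $1$-computable. In Proposition~\ref{2variab}(i), since $k$ is algebraically closed, a non-square-free $h_1$ has a repeated \emph{linear} factor, so one may take $\deg t = 1$; in case (ii) the bound $e\leq (d_2-d_1+1)/2$ allows $e=1$ because $d_1<d_2$. Hence every binary form is $1$-computable. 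Forms in one variable are $d$-th powers and are handled directly by Remark~\ref{rem3}, not by Theorem~\ref{mainthm1} at all. So every item on the list is $1$-computable, and the paper's proof is literally one line: apply Theorem~\ref{mainthm1} with $e=1$, invoking Remark~\ref{rem2} for $(F_i^\perp)_1=0$, Remark~\ref{rem3} for pure powers, and Remark~\ref{remVand} for the Vandermonde determinant.

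Your proposed workaround (take $e=\max_i e_i$ and lift the other summands to that degree) is not only unnecessary, it is not obviously ``routine'': the admissible ranges of $e$ established in Section~\ref{ecomputabilita} are bounded \emph{above} (e.g.\ $e\leq (a_0+1)/2$ for monomials in Proposition~\ref{monomi}, $e\leq (d_2-d_1+1)/2$ for binary forms), so a large $e$ forced by one summand could easily fall outside the known range for another. You would also need $(F_i^\perp)_e=0$ for the larger $e$, which is a genuine constraint and not automatic from ``$\deg F_i>e$'' (an Artinian Gorenstein ideal can certainly have elements in degrees well below the socle degree). Fortunately none of this is needed once you observe that $e=1$ works uniformly.
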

\begin {proof} All the forms above are $1$-computable, hence the conclusion follows from
Remark \ref{rem2}, Remark \ref {rem3}, Theorem \ref {mainthm1} with $e=1$, and in the case of Vandermonde determinant, Remark \ref {remVand},
\end{proof}

\begin {rem}
If $F$ is a form which is $e$-computable, but not $1$-computable, we can only combine it with other $e$-computable forms to get a form satisfying Strassen's Conjecture.

For example, if $F$ is the form of Example \ref {example2},
then we know that $F$ is $2$-computable and $\mathrm{rk}(F)=25$,
but we know   $F$ is not $1$-computable by Example \ref{example1}.

If $G_1= x_0x_1^4x_2^5$ then we showed that $G_1$ is $1$-computable and $\mathrm{rk}(G_1)=30$.
But we do not know if $G_1$ is $2$-computable.

Thus we cannot use the theorem to find the rank of  $F+G_1$, although Strassen's conjecture says that the rank should be $25 + 30$.

However, if $G_2= x_0^3 x_1^4x_2^5$, by Proposition \ref{monomi},  we know that $G_2$ is $2$-computable and $\mathrm{rk}(G_2)=30$. Hence
$$\rk (F+G_2) = 25+30=55.$$
\end {rem}

%%%%%%%%%%%%%%%%

\begin {rem}
{It would be interesting to have a characterization  of those $F\in k[x_0,x_1]$ for which $F^\perp = (q^a,h_2)$ with $a \geq 2$.
If we had that, we would have examples which were $\deg q$-computable. This would give us more forms for which Strassen's conjecture is true.}

\end  {rem}

%%%%%%%%%%%%%%%%%%%%%%%%%%%%%%%%

\section{Some examples} \label {examples}
%%%%%%%%%%%%%%%%%%%%%%%%%%%%%%%%

\begin{lem}\label{complement} Let $F = x_0^a(x_1^b+\cdots+x_n^b)$ with $a+1 \leq b$, $n \geq3$. If $\mathbb X $ is apolar to $F$, then $|\mathbb X \setminus \{X_i=0\}| \geq b$ for all $i=1,...,n$.
\end{lem}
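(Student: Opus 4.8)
The plan is to fix $i$ (say $i = n$ by symmetry of the variables $x_1,\dots,x_n$) and let $\mathbb{X}$ be any reduced set of points with $I_\mathbb{X} \subset F^\perp$. Write $\mathbb{X} = \mathbb{X}_0 \cup \mathbb{X}'$, where $\mathbb{X}_0 = \mathbb{X} \cap \{X_n = 0\}$ and $\mathbb{X}' = \mathbb{X} \setminus \{X_n = 0\}$; the claim is $|\mathbb{X}'| \geq b$. The natural tool is Lemma \ref{colonperp} together with the colon/Hilbert-function bookkeeping from Remark \ref{sommahilb}: since $X_n$ vanishes on $\mathbb{X}_0$, the ideal $I_\mathbb{X} : (X_n)$ is the saturated ideal of $\mathbb{X}'$, and one wants to bound $|\mathbb{X}'| = \deg(T/I_{\mathbb{X}'})$ from below.

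**The key computation.** The engine is the inclusion $I_\mathbb{X} \subset F^\perp$, which passes to colons: $I_{\mathbb{X}'} = I_\mathbb{X} : (X_n)^\infty \subseteq F^\perp : (X_n)^\infty$. By Lemma \ref{colonperp} iterated, $F^\perp : (X_n) = (X_n \circ F)^\perp$ and more generally $F^\perp : (X_n^j) = (X_n^j \circ F)^\perp$; since $X_n^b \circ F = (\text{const})\, x_0^a$ and $x_0^a$ essentially involves only $x_0$, its perp ideal $(x_0^a)^\perp = (X_0^{a+1}, X_1, \dots, X_n)$ contains all the $X_j$ with $j \geq 1$. The crucial point is to track the smallest power $j$ of $X_n$ killing enough of $F$: for $1 \leq j \leq b-1$ one computes $X_n^j \circ F$, and I would compute the Hilbert function of $T/(F^\perp + (X_n))$ directly — the quotient $(x_0^a(x_1^b + \cdots + x_{n-1}^b))^\perp$ restricted appropriately — to get a lower bound on $\sum_i HF(T/(I_{\mathbb{X}'} + (X_n)), i)$. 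Running the exact sequence \eqref{sequenza} with $t = X_n$ against $I_{\mathbb{X}'}$ then gives $|\mathbb{X}'| \geq \sum_{i=0}^\infty HF(T/(F^\perp : (X_n)^\infty + (X_n)), i)$, and the right side should evaluate to at least $b$ because the $b$-th catalecticant-type obstruction (coming from the monomial $x_0^a$ surviving $b-1$ differentiations by $X_n$ but no more, when $a+1 \leq b$) forces that many independent conditions.

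**Alternative, cleaner route.** Rather than the colon-ideal chase, I would actually prefer to argue via apolarity restricted to the variables: set $F' = x_n^b$ appearing as a summand and use that points of $\mathbb{X}$ off $\{X_n = 0\}$ must, after projection, carry the apolar information for a form that still essentially involves $x_n$ to degree $b$. Concretely, the map $\mathbb{X}' \to \mathbb{P}$ and the structure of $F^\perp = (X_0^{a+1}, X_1^b - X_n^b, \dots, X_{n-1}^b - X_n^b, X_iX_j)$ (the complete-intersection-like description used in Proposition \ref{a+1geqb}) show that modulo the linear forms vanishing on the complementary lines, $F$ restricts to something of rank exactly $b$ in the relevant two variables, so any apolar set must contribute $\geq b$ points there. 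One then invokes the two-variable Sylvester bound / Proposition \ref{2variab}.

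**Main obstacle.** The delicate step is ensuring that passing to the colon by $X_n$ (to isolate $\mathbb{X}'$) does not lose points that "should" count — i.e., that $I_\mathbb{X} : (X_n)$ really is radical and cut out by $\mathbb{X}'$, and that the Hilbert-function inequality $HF(T/(I_{\mathbb{X}'} + (X_n)), i) \geq HF(T/(F^\perp : (X_n)^\infty + (X_n)), i)$ is applied in a range where the right-hand side is genuinely nonzero up to accumulated value $b$. The hypothesis $a+1 \leq b$ is exactly what guarantees $x_0^a$ is not itself a sum of fewer than... — more precisely, it guarantees that $X_n^{b-1} \circ F \neq 0$ while lower powers leave a form still involving $x_n$, so the "depth" in the $X_n$ direction is the full $b$; verifying this cleanly, and that it translates into $b$ independent Hilbert-function conditions rather than fewer, is where the real work lies. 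I would expect the condition $n \geq 3$ to enter only to guarantee there are enough other variables that the cross terms $x_1^b, \dots, x_{n-1}^b$ genuinely obstruct collapsing, keeping the argument from degenerating to the $n = 2$ case handled separately in Proposition \ref{n=2}.
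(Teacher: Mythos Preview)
Your first route has a concrete gap. You correctly set $I_{\mathbb X'}=I_{\mathbb X}:(X_n)\subseteq F^\perp:(X_n)=(X_n\circ F)^\perp$, but then you try to bound $|\mathbb X'|$ by summing the Hilbert function of $T/\bigl(F^\perp:(X_n)+(X_n)\bigr)$. Compute that ideal: $X_n\circ F=b\,x_0^ax_n^{b-1}$, so $(X_n\circ F)^\perp=(X_0^{a+1},X_n^{\,b},X_1,\dots,X_{n-1})$, and adding $(X_n)$ collapses it to $(X_0^{a+1},X_1,\dots,X_n)$. The Hilbert function sums to $a+1$, not $b$; under the hypothesis $a+1\le b$ this is strictly too weak. (Your variant with the saturation $F^\perp:(X_n)^\infty$ is worse: $F^\perp$ is Artinian, so $X_n^{d+1}\in F^\perp$ and $F^\perp:(X_n)^\infty=T$, giving nothing.) The point is that the colon by $X_n$ is the right move to isolate $\mathbb X'$, but $X_n$ is the \emph{wrong} non-zerodivisor to then feed into the Hilbert-function machine; you would need a form like $X_0$ instead to see the full $b$.

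The paper sidesteps this entirely by using the monomial rank directly, and the whole proof is one line. From $I_{\mathbb X}:(X_i)\subseteq F^\perp:(X_i)=(x_0^ax_i^{b-1})^\perp$ one concludes that $\mathbb X\setminus\{X_i=0\}$ is apolar to the monomial $x_0^ax_i^{b-1}$; since $a\le b-1$, the known rank formula for monomials (cf.\ \cite{carcatgermonomi}, or Proposition~\ref{monomi} here) gives $\rk(x_0^ax_i^{b-1})=b$, and the Apolarity Lemma forces $|\mathbb X\setminus\{X_i=0\}|\ge b$. Your ``alternative, cleaner route'' is groping toward this---the two-variable/Sylvester remark is essentially the monomial rank in disguise---but you never write down $X_i\circ F=b\,x_0^ax_i^{b-1}$ and invoke its rank, which is the entire content of the argument.

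Two side remarks: the hypothesis $n\ge 3$ plays no role in the proof (the same line works for $n=2$; the restriction is only there because the lemma feeds into Proposition~\ref{3bpunti}), and for a radical ideal of points $I_{\mathbb X}:(X_n)$ is already saturated, so the $(X_n)^\infty$ is unnecessary on that side too.
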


\begin{proof}
Since $I_{\mathbb X} : (X_i) \subseteq F^{\perp} : (X_i) = (x_0^ax_i^{b-1})^{\perp}$  and $\rk(x_0^ax_i^{b-1}) =b$  (see \cite {carcatgermonomi}), the Apolarity Lemma, yields that the ideal $I_{\mathbb X} : (X_i)$ is the homogeneous ideal of a  set of at least $b$ points. That is, $|\mathbb X \setminus \{X_i=0\}| \geq b$ for all $i=1,...,n$.
\end{proof}

\begin{prop}\label{3bpunti} If $F = x_0^a(x_1^b+\cdots+x_n^b)$ with $2 \leq a+1 \leq b$ and $n \geq3$, then $$ bn-n+3 \leq \rk(F) \leq bn.$$
In particular, we have $\rk(x_0^a(x_1^b+x_2^b+x_3^b)) =3b$.
\end{prop}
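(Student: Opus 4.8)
The upper bound $\rk(F) \leq bn$ is immediate from the Apolarity Lemma: the ideal $(X_1X_2, X_1X_3, \ldots, X_{n-1}X_n, \ X_1^b - X_2^b, \ldots, X_1^b - X_n^b)$ — or a suitable reduced variant, along the lines of the apolar sets exhibited in Propositions \ref{a+1geqb} and \ref{n=2} — sits inside $F^\perp$ and cuts out $bn$ distinct points lying on the $n$ coordinate lines $V(X_1X_2,\ldots,X_{n-1}X_n)$, $b$ points on each line. So the entire content is the lower bound $\rk(F) \geq bn - n + 3$.

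For the lower bound, let $\mathbb X$ minimally decompose $F$, so $|\mathbb X| = \rk(F)$. The idea is to control the overlap of $\mathbb X$ with each of the $n$ coordinate hyperplanes $\{X_i = 0\}$. By Lemma \ref{complement}, the set $\mathbb X_i := \mathbb X \setminus \{X_i = 0\}$ has at least $b$ points for each $i = 1, \ldots, n$. Writing $\mathbb X = \mathbb X_i \sqcup (\mathbb X \cap \{X_i = 0\})$ and summing, or better, applying inclusion–exclusion to the sets $\mathbb X \cap \{X_i = 0\}$, one wants: the points of $\mathbb X$ lying off \emph{all} the hyperplanes, those lying on exactly one, etc., must together be forced to number at least $bn - n + 3$. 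Concretely I would argue as follows. Let $P_i = \mathbb X \cap \{X_i = 0\}$ and $P = \mathbb X \setminus \bigcup_i P_i$ (points with all coordinates $X_1,\ldots,X_n$ nonzero; these may have $X_0$ arbitrary). For each $i$, $|P_i| = |\mathbb X| - |\mathbb X_i| \le |\mathbb X| - b$. The key extra input must be an \emph{apolarity constraint on the points supported in a hyperplane}: for a point $p \in P_i$, i.e. $p \in \{X_i = 0\}$, the relevant contribution comes from $I_{\mathbb X} : (\prod_{j \neq i} X_j)$ or from looking at $F^\perp : (\text{linear forms})$ restricted to that hyperplane — on $\{X_i = 0\}$ the form $F$ restricts (in the dual sense) to $x_0^a(x_1^b + \cdots + \widehat{x_i^b} + \cdots + x_n^b)$, a form of the same shape in one fewer variable. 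One then needs a lower bound for how many points of $\mathbb X$ must lie \emph{off} each pair, triple, etc., of hyperplanes, and combine these.

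The cleanest route is probably: show that $\mathbb X$ cannot be concentrated on few of the hyperplanes. Suppose $\mathbb X \subseteq \{X_1 = 0\} \cup \{X_2 = 0\}$, say. Then $I_{\mathbb X} \supseteq (X_1 X_2)$ — no, rather $I_{\mathbb X} \supseteq (\text{the product of the two linear forms})$ only if $\mathbb X$ lies on the union of those two hyperplanes. Since $I_{\mathbb X} \subseteq F^\perp$ and $X_1 X_2 \notin F^\perp$ (as $X_1 X_2 \circ F = x_0^a \cdot (\text{something nonzero}$ only if... actually $X_1X_2 \circ x_0^a(x_1^b+\cdots) = 0$ since no monomial $x_1 x_2 \cdots$ appears) — so I must instead use a product like $X_1^{b-1} X_2^{b-1}\cdots$; care is needed here. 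The correct mechanism: $\mathbb X$ being apolar forces $I_\mathbb X \subseteq F^\perp$, and $F^\perp$ contains no product of $\le n-2$ of the linear forms $X_i$ that would annihilate $F$ — so $\mathbb X$ must meet the complement of any union of $n-2$ coordinate hyperplanes. Applying Lemma \ref{complement}-type reasoning to each hyperplane and the "restricted" forms, together with a counting argument over the $n$ hyperplanes (each of which must contain "few" points because the points off it number $\ge b$, yet the points must be spread out), yields $\rk(F) = |\mathbb X| \ge b + (n-1) \cdot 1 + \cdots$; tuning the combinatorics gives exactly $bn - n + 3$. The special case $n = 3$ then gives $3b - 3 + 3 = 3b$, matching the upper bound $bn = 3b$, hence equality $\rk(x_0^a(x_1^b+x_2^b+x_3^b)) = 3b$.

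\textbf{Main obstacle.} The delicate point is not the upper bound nor the single-hyperplane estimate (Lemma \ref{complement}), but the \emph{simultaneous} bookkeeping across all $n$ hyperplanes: one needs that the points of $\mathbb X$ avoiding a given hyperplane (at least $b$ of them) cannot be "recycled" to also cover the deficits of the other hyperplanes, and extracting the precise constant $-n+3$ rather than something weaker. I expect this to require either a clever inclusion–exclusion on $|\mathbb X \cap \{X_i = 0\}|$ combined with the observation that $\mathbb X$ cannot lie on a union of $n-1$ (or fewer) of the coordinate hyperplanes — which itself follows because such containment would put a reducible form of too-low degree in $F^\perp$, contradicting the explicit generators of $F^\perp$ — or a direct argument that at most one of the hyperplanes can be "almost full." For $n=3$ the combinatorics collapses to a short case analysis, which is why the sharp value is obtained precisely there.
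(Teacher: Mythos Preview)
Your proposal has a genuine gap in the lower-bound argument. The hyperplane-avoidance information from Lemma~\ref{complement} alone---that $|\mathbb X \setminus \{X_i=0\}| \ge b$ for each $i$---cannot by itself yield $|\mathbb X| \ge bn-n+3$, because the sets $\mathbb X_i = \mathbb X \setminus \{X_i=0\}$ may overlap arbitrarily: a single collection of $b$ points with all of $X_1,\ldots,X_n$ nonzero would satisfy every one of these inequalities. Your ``cleanest route''---ruling out $\mathbb X \subseteq \{X_{i_1}=0\}\cup\cdots\cup\{X_{i_k}=0\}$ by arguing the corresponding product lies outside $F^\perp$---fails for the reason you yourself noticed midway: every $X_iX_j$ (indeed every squarefree monomial in $X_1,\ldots,X_n$ of degree $\ge 2$) \emph{is} in $F^\perp$, so no contradiction arises. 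There is no purely combinatorial inclusion--exclusion that produces the constant $-n+3$ from these inputs.

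The paper's argument supplies exactly the missing structural ingredient, and it comes from the $e$-computability machinery rather than from combinatorics. First one computes, via Corollary~\ref{maincor} with $t=X_0$, that $\sum_i HF(T/(F^\perp:(X_0)+(X_0)),i)=bn-n+2$, giving $\rk(F)\ge bn-n+2$. Now \emph{suppose} $\rk(F)=bn-n+2$ and $\mathbb X$ minimally decomposes $F$. Then the rank of $F$ is computed by $X_0$, so Proposition~\ref{prop1ecomp} forces $I_{\mathbb X}+(X_0)=F^\perp+(X_0)$. In particular $X_iX_j \in I_{\mathbb X}+(X_0)$ for all $i<j$, i.e.\ $X_iX_j + X_0 L_{ij} \in I_{\mathbb X}$ for some linear $L_{ij}$; since $I_{\mathbb X}\subset F^\perp$ and $X_iX_j\in F^\perp$, one gets $X_0L_{ij}\in F^\perp$, which forces $L_{ij}=0$ when $a>1$ and, after a short radical-ideal argument using $n\ge 3$, also when $a=1$. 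Hence $X_iX_j\in I_{\mathbb X}$ for all $i<j$, so $\mathbb X$ lies on the $n$ coordinate lines through $[1:0:\cdots:0]$. \emph{Only now} does Lemma~\ref{complement} bite: on each line $\ell_i$ the points of $\mathbb X$ off the vertex are exactly $\mathbb X\setminus\{X_i=0\}$, so each line carries at least $b$ such points, and these are disjoint across lines, giving $|\mathbb X|\ge bn$, a contradiction. The crucial step you are missing is the rigidity statement (Proposition~\ref{prop1ecomp}) that pins $\mathbb X$ to the coordinate lines in the equality case; without it the counting never gets started.
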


\begin{proof}  Note that $F^{\perp} = (X_0^{a+1},X_1X_2,X_1X_3,...,X_{n-1}X_{n},X_1^b-X_2^b,...,X_1^b-X_n^b)$. \\
We split the proof into four steps.

Step 1: $\rk(F) \leq bn$\\
It is easy to see that $I = (X_1X_2,X_1X_3,...,X_{n-1}X_{n},(n-1)X_1^b-X_2^b-\cdots-X_n^b-X_0^b) \subseteq F^{\perp}$  is the homogenous ideal of a set of $bn$ distinct  points. By the Apolarity Lemma $\rk(F) \leq bn$ .

Step 2:  $bn-n+2 \leq \rk(F)$\\
Let $\tilde{I} = F^{\perp} : (X_0) + (X_0)=(X_0,X_1X_2,X_1X_3,...,X_{n-1}X_{n},X_1^b-X_2^b,...,X_1^b-X_n^b)$. Thus we have
\[
 \begin{array}{c|cccccc}
   i                   &  0& 1 & \cdots & b-1 & b & b+1\\
  \hline \\
  HF(T/\tilde{I},i)   &  1&n & \cdots & n    &1 &0 \\

 \end{array}
 .\]
Hence, by Corollary \ref {maincor}, we get $\rk(F) \geq \sum_{i \geq 0} HF(T/\tilde{I},i) = bn-n+2$.

Step 3: {\it  Let $ \mathbb X$ be apolar to  $F$ and $a=1$. If $X_iX_j +c_{ij} X_0^2 \in I_{\mathbb X}$  for all $1\leq i<j \leq n$, then $c_{ij}=0$ for all $i,j$.}

Suppose that $c_{ij} \neq 0$ for some $i<j$. Say, $c_{12} \neq 0$, then we have $X_1X_2 +c_{12}X_0^2, \ X_1X_3+c_{13}X_0^2 \in I_{\mathbb X}$. Thus $X_1(c_{13}X_2-c_{12}X_3)  \in I_{\mathbb X}.$
Thus we have
\[X_1  \in (I_{\mathbb X} : (c_{13}X_2-c_{12}X_3))\]
and hence
\[c_{12}X_0^2 \in (I_{\mathbb X} : (c_{13}X_2-c_{12}X_3)).\]
Since the ideal is radical we get
\[X_0  \in  (I_{\mathbb X} : (c_{13}X_2-c_{12}X_3))\]
and thus
\[X_0(c_{13}X_2-c_{12}X_3) \in I_{\mathbb X}\]
and this yields the contradiction $c_{12} = 0$ and $c_{13} = 0$.

Step 4: $bn-n+2 < \rk(F)$.\\
Suppose that $\rk(F)=bn-n+2=|\mathbb X| $, where $\X$  minimally decomposes $F$.

By  the proof of Step 2, the rank of F is computed by  $X_0$, hence
by Proposition \ref{prop1ecomp}  we get  $I_\X+ (X_0)= F^\perp+(X_0)$.
 In particular we have $X_iX_j \in I_{\mathbb X}+(X_0)$ for all $1 \leq i<j \leq n$, and so $X_iX_j + L_{ij}X_0 \in I_{\mathbb X}$ for some linear form $L_{ij}$. Since $I_{\mathbb X} \subset F^{\perp}$ and $X_iX_j \in F^{\perp}$, then $L_{ij}X_0 \in F^{\perp}$.

 If $a>1$, then $L_{ij}=0$.

 Let $a=1$. We get $L_{ij}= c_{ij}X_0$ and hence $X_iX_j +c_{ij} X_0^2 \in I_{\mathbb X}$. By Step 3, we have $c_{ij}=0$.

 Consequently, $X_iX_j \in  I_{\mathbb X}$ for all $1\leq i<j\leq n$ and for any $a \geq 1$.\\
 Now, since the ideal $(X_1X_2,X_1X_3,...,X_{{n-1}{n}})$ is the homogeneous ideal  of $n$ lines $l_1,...,l_n$ where $ l_i = \{X_1=X_2=\cdots=\hat{X_i}=\cdots=X_n=0\}$, it follows that all the points of $\X$ lie on the union of the lines  $l_i$. Since  $ \X \setminus \{X_i = 0\} = \X \cap (l_i \setminus (1,0,...,0))$, by Lemma \ref{complement} we have that
\[bn-n+2= | \X| \geq  \sum_{i=1} ^n |\mathbb X \setminus \{X_i = 0\}| \geq bn ,\]
 a contradiction.
 \end{proof}

\begin {rem} The form $F = w(x^3+y^3+z^3)\in k[x,y,z,w]$  is not 1-computable.

 If  $F$ is 1-computable, then there exists an ideal
$I \subset T$  of a linear space $L$  such  that

 \[ \rk (F) =
\sum_{i=0}^\infty HF(T/(F^\perp: I +(t)),i) ,\]
where  $t=aX+bY+cZ+dW  \in I$ is a general linear form.

If $t$ has at least two of the coefficients  $a,b,c,d$ different from zero, since
$$
F^\perp= (W^2, YZ, XZ, XY,  Y^3 -Z^3, X^3 -Z^3),
$$
we get that
$$HF(T/(F^\perp +(t)),0) =1$$
$$HF(T/(F^\perp +(t)),1) =3$$
$$HF(T/(F^\perp +(t)),2) \leq 3$$
$$HF(T/(F^\perp +(t)),3) \leq 1$$
$$HF(T/(F^\perp +(t)),4) =0.$$
By Proposition \ref{3bpunti}  we know that $\rk (F) =9$ and since
$$
F^\perp +(t) \subseteq F^\perp: I +(t)
$$
we get
$$ 8 \geq \sum_{i=0}^\infty HF(T/(F^\perp+(t)),i)  \geq
\sum_{i=0}^\infty HF(T/(F^\perp: I+(t)),i) =\rk (F) =9
,$$
and this is a contradiction.

Now if $L$ is a point or a line, and  $\{t=0\}$ is a general plane through $L$, then $t$ has at least two of the coefficients  $a,b,c,d$ different from zero. If $L$ is a plane, then $ (t)=I $, and the only planes with three coefficients zero between $a,b,c,d$ are the coordinate planes.
Hence the only possibility for  $F $ to be 1-computable, is with $L= \{X=0\}, \{Y=0\}, \{Z=0\},\{W=0\}$, but
$$\sum_{i=0}^\infty HF(T/(F^\perp: (X) +(X)),i) =2,
$$
(analogously for $Y$ and $Z$) and
$$\sum_{i=0}^\infty HF(T/(F^\perp: (W) +(W)),i) =8.
$$

Hence, $F$ is not $1$-computable.

\end {rem}

%%%%%%%%%%%%%%%%%%%%%%%%%%%%%%%%%%%%%%%

%%%%%%%%%%%%%%%%%%%%%%%%%%%%%%%%%%%%%%%%

%\bibliographystyle{alpha}
%\bibliography{biblioCarlini}

%%%%%%%%%%%%%%%%%%%%%%%%%%%%%%%%%

\end{document}